\numberwithin{equation}{section}
\theoremstyle{plain}
\newtheorem{thm}{Theorem}[section]
\newtheorem{cor}[thm]{Corollary}
\newtheorem{lemma}[thm]{Lemma}
\newtheorem{claim}[thm]{Assumption}
\newtheorem{problem}[thm]{Problem}
\theoremstyle{definition}
\newtheorem{defi}[thm]{Definition}
\newtheorem{rem}[thm]{Remark}
\newtheorem{ex}[thm]{Example}
\begin{document}

\title[Inverse source problem]{Heat Source Determining Inverse Problem for non-local in time equation}

\date{\today}

\author[D. Serikbaev]{Daurenbek Serikbaev}
\address{
  Daurenbek Serikbaev:
  \endgraf   
   Al--Farabi Kazakh National University
  \endgraf
  71 Al-Farabi ave., 050040, Almaty, Kazakhstan
  \endgraf
  and
  \endgraf
  Department of Mathematics: Analysis, Logic and Discrete Mathematics
  \endgraf
  Ghent University, Belgium
  \endgraf
  and
  \endgraf   
  Institute of Mathematics and Mathematical Modeling
  \endgraf
  125 Pushkin str., 050010 Almaty, Kazakhstan
  \endgraf 
  {\it E-mail address} {\rm daurenbek.serikbaev@ugent.be}
  }
  
\author[M. Ruzhansky]{Michael Ruzhansky}
\address{
  Michael Ruzhansky:
  \endgraf
Department of Mathematics: Analysis,
Logic and Discrete Mathematics
  \endgraf
Ghent University, Belgium
  \endgraf
 and
  \endgraf
 School of Mathematical Sciences
 \endgraf
Queen Mary University of London
\endgraf
United Kingdom
\endgraf
  {\it E-mail address} {\rm michael.ruzhansky@ugent.be}
 }

\author[N. Tokmagambetov ]{Niyaz Tokmagambetov }
\address{
  Niyaz Tokmagambetov:
  \endgraf 
  Centre de Recerca Matem\'atica
  \endgraf
  Edifici C, Campus Bellaterra, 08193 Bellaterra (Barcelona), Spain
  \endgraf
  and
  \endgraf   
  Institute of Mathematics and Mathematical Modeling
  \endgraf
  125 Pushkin str., 050010 Almaty, Kazakhstan
  \endgraf  
  {\it E-mail address:} {\rm tokmagambetov@crm.cat; tokmagambetov@math.kz}
  }

\date{\today}

\thanks{This research was funded by the Science Committee of the Ministry of Education and Science of the Republic of Kazakhstan (Grant No. AP14872042), by the FWO Odysseus 1 grant G.0H94.18N: Analysis and Partial Differential Equations, and by the Methusalem programme of the Ghent University Special Research Fund (BOF) (Grant number 01M01021). MR is also supported by EPSRC grant EP/R003025/2. NT is also supported by the Beatriu de Pin\'os programme and by AGAUR (Generalitat de Catalunya) grant 2021 SGR 00087.}

\keywords{Heat equation; direct problem; source inverse problem; well-posedness; positive operator; Caputo fractional derivative}

\maketitle

\begin{abstract}
In this paper, we consider the inverse problem of determining the time-dependent source term in the general setting of Hilbert spaces and for general additional data. We prove the well-posedness of this inverse problem by reducing the problem to an operator equation for the source function. 
\end{abstract}

\tableofcontents

\section{Introduction}
Let $T > 0$ be an arbitrarily large constant and  $\mathcal{H}$ be a separable Hilbert space. The object of this paper is the following time-fractional heat equation for a general operator $\mathcal{L}$:
\begin{equation}\label{EQ:ISP}
\mathcal{D}_t^\alpha u(t)+a(t)\mathcal{L}u(t)=r(t)g,\; \text{in}\; \mathcal{H},
\end{equation}
for $t\in(0,T].$ Here $\mathcal{D}_t^\alpha$ is the Caputo derivative of the order $\alpha\in(0,1)$ defined by
$$\mathcal{D}_t^\alpha u(t)=\frac{1}{\Gamma(1-\alpha)}\int_0^t (t-\tau)^{-\alpha}\frac{d}{d\tau} u(\tau)d\tau,$$
with the Gamma function $\Gamma(\cdot).$ And, $\mathcal{L}$ is a self-adjoint operator with a discrete spectrum $\{\lambda_\xi>0:\;\xi\in\mathcal{I}\}$ on $\mathcal{H}$, where $\mathcal{I}$ is a countable set, and the system of eigenfunctions $\{\omega_\xi\in\mathcal{H}:\xi\in\mathcal{I}\}$ of the operator $\mathcal{L}$ forms an orthonormal basis in the space $\mathcal{H}.$ %(see \cite{RTT19}).
%\begin{equation}
 %   \begin{cases}
  %       &\mathcal{D}_t^\alpha(t)+a(t)\mathcal{L}u(t)=r(t)g,\; \text{in}\; \mathcal{H},\;t\in(0,T],\\
   %      &u(0)=h\;\text{in}\; \mathcal{H},
   % \end{cases}
%\end{equation}

In this paper, for the equation \eqref{EQ:ISP} we will study the following inverse source problem:
\begin{problem}\label{P:ISP} Given $a(t),\,g$ and an initial function
\begin{equation}\label{CON:IN Source}
    u(0)=h\;\text{in}\; \mathcal{H},
\end{equation}
and an additional data
\begin{equation}\label{CON:ADD Source}
    F[u(t)]=E(t),\; t\in[0,T],
\end{equation}
find a pair of functions $\{r(t),u(t)\}$ satisfying the equation \eqref{EQ:ISP} and conditions \eqref{CON:IN Source}--\eqref{CON:ADD Source}.
\end{problem}

In \eqref{CON:ADD Source}, $F$ is a \textbf{linear bounded functional}
$$F:\mathcal{H}^{1+\gamma}\rightarrow \mathbb{R}.$$ 
Here $\mathcal{H}^{1+\gamma}=\{u\in\mathcal{H}:\,\mathcal{L}^{1+\gamma}u\in\mathcal{H}\}$ and $\gamma\geq 0,$ and $F$ satisfies the following assumption:
%To this end, we below suppose the functional $F$ satisfies the following assumptions additionally:
\begin{equation}\label{gamma F}
    \left\{\frac{F[\omega_\xi]}{\lambda_\xi^\gamma}\right\}\in l^2(\mathcal{I}).
\end{equation}

As an illustration, we give the following example. 
\begin{ex}
Fix $\mathcal{H}\equiv L^2(0, 1)$ and 
$$
\mathcal{L}u=-u_{xx},\; x\in (0,1),
$$ 
with the homogeneous Dirichlet boundary conditions. The systems of eigenvalues and eigenfunctions are $\{(\pi k)^2\}_{k\in\mathbb{N}}$ and $\{\sqrt{2}\sin{k\pi x}\}_{k\in\mathbb{N}}$, respectively.
%$I)$ Let the extra condition given by $F[u(t)]:=u(t,x^*),\; x^*\in (0,1).$ 
%In view of \eqref{gamma F} we have
%$$k^{2\gamma}\sqrt{2}\sin{k\pi x^*}=o(k^{-\frac{1}{2}})\;\text{when}\; k\rightarrow \infty.$$
%This means $$\lim_{k\rightarrow \infty}\frac{k^{2\gamma}\sqrt{2}\sin{k\pi x^*}}{k^{-\frac{1}{2}}}=0.$$
%Since $\sqrt{2}\sin{k\pi x^*}=O(\sqrt{2}),$ $k^{2\gamma}<k^{-\frac{1}{2}}$ then $\gamma$ can be any constant belongs the set $(-\infty;-\frac{1}{4}).$ 
%\textcolor{blue}{$II)$ 

Assume that the additional data is given by the functional $F[u(t,\cdot)]:=\int_{0}^1 u(t,x)dx$. Then 
%Then by \eqref{gamma F} we have
%$$k^{2\gamma}\sqrt{2}\int_0^1 \sin{k\pi x}dx=o(k^{-\frac{1}{2}})\;\text{when}\; k\rightarrow \infty.$$
%This means $$\lim_{k\rightarrow \infty}\frac{k^{2\gamma}\sqrt{2}\int_0^1 \sin{k\pi x}dx}{k^{-\frac{1}{2}}}=0.$$
\begin{equation*}F[\omega_k]=\int_0^1 \sqrt{2}\sin{k\pi x}dx=\frac{\sqrt{2}(1+(-1)^{k+1})}{k\pi}=\begin{cases}
    &\frac{2\sqrt{2}}{k\pi},\;\text{if},\;k=2n-1\;(n\in\mathbb{N});\\
    &0, \;\text{if},\;k=2n\;(n\in\mathbb{N}),
\end{cases}\end{equation*} 
and 
$$
\sum_{k\in\mathbb{N}} |F[\omega_\xi]|^2<\infty.
$$ 
From this we see that $\gamma$ in \eqref{gamma F} can be taken to be $0.$  
\end{ex}

%We give the definition of $\mathcal{H}^{1-\gamma}$ in the next section.

Without loss of generality, we can suppose
\begin{equation*}\label{Positiveness F}
   \text{that}\;F\not\equiv 0\; \text{and that}\; F[\omega_\xi]\geq 0, \text{for all}\;\xi\in\mathcal{I}.
\end{equation*}

%\textcolor{red}{I write here $F[\omega_\eta]\neq 0$ because of the following example:
%For example, let $\mathcal{H}$ is $L^2$ and $\mathcal{L}u=-u_{xx},\; x\in (0,1)$ with homogeneous Neumann BC. Then the operator has eigensystem $\{k^2, \sqrt{2}\cos{k\pi x}\}_{k\in\mathbb{N}\cup {0}}.$ Let $F[u](t)=\int_{0}^1 u(t,x)dx.$ Then
%$$F[\omega_k]=\int_{0}^1 \sqrt{2}\cos{k\pi x} dx=0.$$
%Even this operator out of consideration because of eigenvalue $\lambda_k\geq 0.$ I think people can give contr example when see the \eqref{Positiveness F} without $F[\omega_\eta]\neq 0.$}

Otherwise, if $F[\omega_\zeta]<0$ for some $\zeta\in\mathcal{I}$, we can replace $\omega_\zeta$ by $-\omega_\zeta$. Then such $-\omega_\zeta$ satisfy all the properties we need, such as it is still an eigenfunction of $\mathcal{L}$ corresponding to the eigenvalue $\lambda_\zeta$, giving an orthonormal basis of $\mathcal{H},$ and $F[-\omega_\zeta]\geq 0$.

For Problem \ref{P:ISP}, we obtain the following results:
\begin{itemize}
    \item The existence of the solution;
    \item The uniqueness of the solution;
    \item The continuous dependence on the data.
\end{itemize}

The forward Cauchy problem for the model equation \eqref{EQ:ISP} was studied in \cite{SRT23} in the general setting. In this paper, we will use freely the analysis developed in \cite{SRT23}. %which is done in \cite{SRT23}. 
To the best of our knowledge, since then there has been no work that considered inverse source problems such as ours for the model equation \eqref{EQ:ISP} for $\mathcal{L}$ and given additional information by functional $F.$ Therefore, here we will mention only the scientific works closest to our research, which have been done for particular cases of $\mathcal{L}$ and $F$. 

When $a(t)=1$ and $h=0$, in the last section of \cite{SY11} the inverse problem of determining the time-dependent source function was considered for the model case of \eqref{EQ:ISP} with additional information given at some interior fixed point of $\Omega.$ Using this additional information the authors showed only a stability estimate for $r.$ In this paper, we reduce Problem \ref{P:ISP} to the operator equation for $r(t).$ To this end, by using the additional condition \eqref{CON:ADD Source}, we introduce an operator $K$ with the fixed point at $r(t)$. We prove that the unique fixed point of the operator equation is the unknown coefficient $r(t)$ by energy estimate and show its existence by using Shauder's fixed point theorem. 
%\textcolor{red}{also, we prove the continuous dependence upon the date of the solution.} 
In addition, we show the stability estimate of $r$ when $h=0.$ 

If $a(t)=constant$ the existence of a solution to the inverse problem of determining the time-dependent source function in time-fractional equations is widely used, the proof is established by reducing the problem to the Volterra integral equation using the additional information references given in \cite{AKM13, IC16, OP22, LL21}. Unlike in our case, this method is not applicable because we cannot obtain explicit equality such as the Volterra integral equation for $r(t)$, we can only work with estimates for it. In \cite{HV22a} the authors considered the inverse problem identifying time-dependent source function in a time-fractional heat equation with a general second-order linear differential operator given by $\mathcal{L}g(x)=\sum_{i,j=1}^{d}\partial_j(a_{ij}(t,x)\partial_j g(x))+c(t)g(x),\;x\in \Omega.$ The time-dependent source function is recovered from an additional integral measurement. The authors established the existence, uniqueness, and regularity of the weak solution. The existence of a weak solution was obtained using Rothe’s method. Meanwhile, well-posedness results in the same problem for multi-term time-fractional diffusion equations are obtained in \cite{HV22b}. %In view of this review, we see that Rothe's method is applicable to prove the existence of solutions to Problem \ref{P:ISP}. 

%In order to enrich the importance of our research, the existence of a fixed point to the above-mentioned operator $K$ will be proved using Shauder's fixed point theorem. This is the first attempt to solve the model equation \eqref{EQ:ISP} by this method.
In our research, the existence of a fixed point to the above-mentioned operator $K$ will be proved using Shauder's fixed point theorem. This is the first attempt to solve the model equation \eqref{EQ:ISP} by this method.

This paper follows the following structure. In Section \ref{Pre}, we collect some preliminary results on fractional calculus, the mean value theorem, and the definitions of spaces. The necessary results of the direct problem for our study are given at the beginning of Section \ref{Main}. The rest of this section deals
with the inverse problem of recovering the pair $(r,u)$. Specifically, an operator $K$ is introduced; then the existence and uniqueness of its fixed point are proved. The stability estimate of $r$ is also given. Moreover, continuous dependence of the solution to Problem \ref{P:ISP} is shown. In Sections \ref{S:L} and \ref{S:F}, we give examples of the operator $\mathcal{L}$ and of the functional $F,$ respectively. 
In Section \ref{S:gamma}, for the particular cases of $\mathcal L$ and ${F}$ we show how to find the value of $\gamma$ in \eqref{gamma F}.
In the last section, we include as an Appendix some classical theorems which are used in this paper.

\section{Preliminary}
\label{Pre}

\subsection{Mittag-Leffler function} In this subsection, we will introduce the Mittag-Leffler function and give its necessary properties which we will use in our investigation. The Mittag-Leffler function is a two-parameter function defined as
$$E_{\alpha,\beta}(z)=\sum_{k=0}^\infty \frac{z^k}{\Gamma(k\alpha+\beta)},\;z\in\mathbb{C}.$$
It generalizes the natural exponential function in the sense that $E_{1,1}=e^z.$

Note that in \cite{Sim14} the following estimate for the Mittag-Leffler function is proved, when $0<\alpha<1$ (not true for $\alpha\geq 1$)
\begin{equation*}
\frac{1}{1+\Gamma(1-\alpha)z}\leq E_{\alpha,1}(-z)\leq\frac{1}{1+\Gamma(1+\alpha)^{-1}z},\;z>0,
\end{equation*}
Thus, it follows that
\begin{equation}\label{EST: Mittag}
0<E_{\alpha,1}(-z)<1,\;z>0.
\end{equation}
\begin{lemma}\label{L:Der E}
For $\lambda>0,\,\alpha>0$ and $n\in\mathbb{N}^{+},$ we have
$$\frac{d^n}{dt^n}E_{\alpha,1}(-\lambda t^\alpha)=-\lambda t^{\alpha-n}E_{\alpha,\alpha-n+1}(-\lambda t^\alpha),\;t>0.$$
In particular, if we set $n=1,$ then we have
$$\frac{d}{dt}E_{\alpha,1}(-\lambda t^\alpha)=-\lambda t^{\alpha-1}E_{\alpha,\alpha}(-\lambda t^\alpha),\;t>0.$$
\end{lemma}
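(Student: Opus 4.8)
The plan is to prove the identity directly from the power series definition of the Mittag-Leffler function by differentiating term by term. Writing
$$E_{\alpha,1}(-\lambda t^\alpha)=\sum_{k=0}^\infty\frac{(-\lambda)^k\, t^{k\alpha}}{\Gamma(k\alpha+1)},$$
I would first justify that this series may be differentiated $n$ times term by term on every compact subinterval $[a,b]\subset(0,\infty)$. This follows from the standard theorem on differentiation of function series, applied $n$ times, together with the super-exponential decay of $1/\Gamma(k\alpha+1-m)$ for $0\le m\le n$ (from Stirling's asymptotics of the Gamma function) and the elementary bound $t^{k\alpha-m}\le\max\{a^{k\alpha-m},b^{k\alpha-m}\}$ on $[a,b]$, which together show that each of the formally differentiated series converges uniformly on $[a,b]$.

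Next I would apply the power rule $\frac{d^n}{dt^n}t^{p}=\frac{\Gamma(p+1)}{\Gamma(p-n+1)}\,t^{p-n}$, valid for every real $p$ and $n\in\mathbb{N}^{+}$ with the convention that $1/\Gamma$ vanishes at non-positive integers, so that the formula correctly returns $0$ whenever $t^{p}$ is a polynomial term of degree $<n$ — in particular for the $k=0$ term, and also for those $k\ge 1$ with $k\alpha$ a non-negative integer less than $n$. Since the factor $\Gamma(k\alpha+1)$ then cancels, this yields
$$\frac{d^n}{dt^n}E_{\alpha,1}(-\lambda t^\alpha)=\sum_{k=1}^\infty\frac{(-\lambda)^k\, t^{k\alpha-n}}{\Gamma(k\alpha+1-n)}.$$

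Finally I would re-index by setting $j=k-1$ and factor out $-\lambda t^{\alpha-n}$:
$$\frac{d^n}{dt^n}E_{\alpha,1}(-\lambda t^\alpha)=-\lambda t^{\alpha-n}\sum_{j=0}^\infty\frac{(-\lambda t^\alpha)^{j}}{\Gamma\big(j\alpha+(\alpha-n+1)\big)}=-\lambda t^{\alpha-n}E_{\alpha,\alpha-n+1}(-\lambda t^\alpha),$$
which is exactly the claimed formula; setting $n=1$ gives the stated special case. An alternative route would be to establish the case $n=1$ first and then induct on $n$, but the direct computation settles all $n$ simultaneously. I expect the only point requiring genuine care to be the justification of term-by-term differentiation; once the uniform convergence of the differentiated series on compact subsets of $(0,\infty)$ is in hand, the remainder is bookkeeping with the Gamma function, including verifying that the convention $1/\Gamma(-m)=0$ for non-negative integers $m$ makes the re-indexed series coincide with the definition of $E_{\alpha,\alpha-n+1}$.
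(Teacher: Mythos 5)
Your computation is correct: term-by-term differentiation of the power series, the power rule with the convention $1/\Gamma(\text{non-positive integer})=0$, and the re-indexing $j=k-1$ together give exactly the claimed identity, and your justification of the interchange via locally uniform convergence on compact subsets of $(0,\infty)$ (using the super-exponential decay of $1/\Gamma(k\alpha+1-m)$) is sound. The paper itself does not prove this lemma but merely cites \cite[Lemma 3.2]{SY11}; your argument is precisely the standard series manipulation underlying that cited result, so there is no methodological divergence to report.
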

\begin{proof}
This is \cite[Lemma 3.2]{SY11}. 
\end{proof}
\begin{lemma}\label{EST:1/Gamma}\cite{BZhSY16}
For $0<\alpha<1,$ the Mittag-Leffler type function $E_{\alpha,\alpha}(-\lambda t^\alpha)$ satisfies
$$0\leq E_{\alpha,\alpha}(-\lambda t^\alpha)\leq\frac{1}{\Gamma(\alpha)},\; t\in[0,\infty),\; \lambda\geq 0.$$
\end{lemma}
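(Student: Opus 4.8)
The plan is to reduce the claimed two-sided bound to properties of the one-variable function $f(x):=E_{\alpha,\alpha}(-x)$, $x\in[0,\infty)$, and then to substitute $x=\lambda t^{\alpha}$. Two ingredients are needed: the value of $f$ at the origin and its monotonicity. From the defining series $E_{\alpha,\alpha}(z)=\sum_{k\ge 0}z^{k}/\Gamma(k\alpha+\alpha)$ only the term $k=0$ survives at $z=0$, so $f(0)=E_{\alpha,\alpha}(0)=1/\Gamma(\alpha)$, which already pins down the upper endpoint.

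The core step is to show that $f$ is completely monotone on $[0,\infty)$, i.e. $(-1)^{n}f^{(n)}(x)\ge 0$ for all $n\ge 0$ and $x>0$; this is the classical fact that $E_{\alpha,\beta}(-x)$ is completely monotone whenever $0<\alpha\le 1$ and $\beta\ge\alpha$, which applies here since $\beta=\alpha$ and $0<\alpha<1$. I would either cite this directly or recover it from the Bernstein--Widder representation $f(x)=\int_{0}^{\infty}e^{-xs}\,d\mu_{\alpha}(s)$ with $\mu_{\alpha}$ a positive Borel measure, the latter obtained from the Hankel-contour (Laplace-inversion) formula for the Mittag-Leffler function, whose integrand has no poles for $0<\alpha<1$. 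Complete monotonicity with $n=0$ gives $f(x)\ge 0$, which is the left inequality; with $n=1$ it gives $f'(x)\le 0$, so $f$ is non-increasing on $[0,\infty)$, whence $f(x)\le f(0)=1/\Gamma(\alpha)$ for every $x\ge 0$. Equivalently, straight from the Bernstein representation, $0\le f(x)\le\mu_{\alpha}([0,\infty))=f(0)=1/\Gamma(\alpha)$.

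Finally, for $\lambda\ge 0$ and $t\in[0,\infty)$ one has $x:=\lambda t^{\alpha}\ge 0$, so evaluating $0\le f(x)\le 1/\Gamma(\alpha)$ at this point yields $0\le E_{\alpha,\alpha}(-\lambda t^{\alpha})\le 1/\Gamma(\alpha)$, as required. An alternative route to the non-negativity uses Lemma \ref{L:Der E} together with the identity $E_{\alpha,\alpha}(-x)=-\alpha\frac{d}{dx}E_{\alpha,1}(-x)$ (checked termwise via $\Gamma(y+1)=y\Gamma(y)$) and the complete monotonicity of $E_{\alpha,1}(-x)$, which forces $E_{\alpha,1}(-\lambda t^{\alpha})$ to be non-increasing in $t$ and hence $-\lambda t^{\alpha-1}E_{\alpha,\alpha}(-\lambda t^{\alpha})\le 0$. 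The main obstacle is precisely the non-negativity $E_{\alpha,\alpha}(-x)\ge 0$: the series is alternating but its terms need not decrease monotonically, so the alternating-series estimate is unavailable and one genuinely needs complete monotonicity (Bernstein's theorem plus an integral representation) or an equivalent subordination argument; the value at the origin, the monotonicity step, and the substitution are all routine.
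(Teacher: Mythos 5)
Your argument is correct: the identity $E_{\alpha,\alpha}(0)=1/\Gamma(\alpha)$ plus the complete monotonicity of $x\mapsto E_{\alpha,\beta}(-x)$ for $0<\alpha\le 1$, $\beta\ge\alpha$ (equivalently, the non-negativity of $E_{\alpha,\alpha}(-x)$ via $E_{\alpha,\alpha}(-x)=-\alpha\tfrac{d}{dx}E_{\alpha,1}(-x)$ and the monotonicity of $E_{\alpha,1}$) immediately yields $0\le E_{\alpha,\alpha}(-\lambda t^{\alpha})\le 1/\Gamma(\alpha)$. The paper does not prove this lemma at all but simply cites \cite{BZhSY16}, where the standard argument is exactly the complete-monotonicity one you outline, so your proposal supplies the intended proof; you are also right that the only non-routine ingredient is the non-negativity, which must be imported from the Bernstein/Hankel-contour representation rather than from an alternating-series estimate.
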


\subsection{Mean value theorem} In this part, we present the generalized mean value theorem. 
\begin{thm}\label{Cor}\cite[Theorem 1]{OSh07} Let $0<\alpha\leq 1,$ $a<b$ and $g\in C[a,b]$ be such that ${}_a\mathcal{D}^\alpha_t g\in C[a,b].$ Then, there exists some $t\in (a,b)$ such that
$$
g(b)-g(a)=\frac{1}{\Gamma(\alpha+1)}{}_a\mathcal{D}^\alpha_t g(t)\cdot (b-a)^\alpha.
$$
\end{thm}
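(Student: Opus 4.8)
The plan is to reduce this to a fractional analogue of the elementary chain $g(b)-g(a)=\int_a^b g'(\tau)\,d\tau=g'(t)\,(b-a)$, by combining the fundamental theorem of fractional calculus with a weighted mean value theorem for integrals. Throughout, write the Riemann--Liouville fractional integral of order $\alpha$ based at $a$ as $(I_a^\alpha\phi)(t)=\frac{1}{\Gamma(\alpha)}\int_a^t (t-\tau)^{\alpha-1}\phi(\tau)\,d\tau$.

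First, I would establish the identity
$$g(b)-g(a)=\frac{1}{\Gamma(\alpha)}\int_a^b (b-\tau)^{\alpha-1}\,{}_a\mathcal{D}^\alpha_\tau g(\tau)\,d\tau,$$
which says that $I_a^\alpha$ inverts the Caputo derivative ${}_a\mathcal{D}^\alpha_t$ from the left (for $\alpha=1$ it is literally the fundamental theorem of calculus, since $(b-\tau)^{\alpha-1}\equiv 1$). Under the hypotheses $g\in C[a,b]$ and ${}_a\mathcal{D}^\alpha_t g\in C[a,b]$ this composition rule is classical: it can be quoted from the standard theory on the class of functions with continuous Caputo derivative, or obtained directly by inserting the definitions of $I_a^\alpha$ and of ${}_a\mathcal{D}^\alpha_t g$ and using the semigroup property $I_a^\alpha I_a^{1-\alpha}=I_a^1$ after an interchange of integrals that is legitimate because $\alpha-1>-1$ and the integrand is bounded.

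Next, note that the kernel $\tau\mapsto (b-\tau)^{\alpha-1}$ is nonnegative and belongs to $L^1(a,b)$, while $\tau\mapsto {}_a\mathcal{D}^\alpha_\tau g(\tau)$ is continuous on $[a,b]$. Hence the weighted first mean value theorem for integrals applies and yields a point $t\in(a,b)$ with
$$\int_a^b (b-\tau)^{\alpha-1}\,{}_a\mathcal{D}^\alpha_\tau g(\tau)\,d\tau={}_a\mathcal{D}^\alpha_t g(t)\int_a^b (b-\tau)^{\alpha-1}\,d\tau.$$
One has to make sure $t$ lands in the open interval: if ${}_a\mathcal{D}^\alpha_t g$ is constant this is trivial, and otherwise the left-hand side lies strictly between $m\int_a^b(b-\tau)^{\alpha-1}\,d\tau$ and $M\int_a^b(b-\tau)^{\alpha-1}\,d\tau$, with $m<M$ the extrema of ${}_a\mathcal{D}^\alpha_t g$ over $[a,b]$, so the intermediate value theorem provides an interior $t$.

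Finally, I would compute $\int_a^b (b-\tau)^{\alpha-1}\,d\tau=\frac{(b-a)^\alpha}{\alpha}$ and use $\Gamma(\alpha+1)=\alpha\,\Gamma(\alpha)$ to get
$$g(b)-g(a)=\frac{1}{\Gamma(\alpha)}\cdot\frac{(b-a)^\alpha}{\alpha}\,{}_a\mathcal{D}^\alpha_t g(t)=\frac{(b-a)^\alpha}{\Gamma(\alpha+1)}\,{}_a\mathcal{D}^\alpha_t g(t),$$
which is the assertion. I expect the only genuinely delicate point to be the first step: making precise the fractional fundamental theorem of calculus under the bare assumption that $g$ and its Caputo derivative are continuous (rather than, e.g., $g$ absolutely continuous), which is exactly why invoking \cite{OSh07} is appropriate here; the remaining two steps are routine.
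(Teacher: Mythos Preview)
The paper does not prove this theorem at all: it is quoted verbatim as \cite[Theorem~1]{OSh07} and used as a black box later in Lemma~\ref{L:Equicon Source}. Your outline is in fact the standard Odibat--Shawagfeh argument (fractional fundamental theorem $I_a^\alpha\bigl({}_a\mathcal{D}^\alpha g\bigr)(b)=g(b)-g(a)$, followed by the weighted integral mean value theorem and the evaluation $\int_a^b(b-\tau)^{\alpha-1}\,d\tau=(b-a)^\alpha/\alpha$), and it is correct; your remark that the only delicate point is justifying the inversion identity under the bare hypotheses $g,\,{}_a\mathcal{D}^\alpha_t g\in C[a,b]$ is exactly right, since the very definition of the Caputo derivative used here already presupposes enough regularity on $g'$ for that step to go through.
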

Here ${}_a\mathcal{D}^\alpha_t$ is a Caputo derivative defined by
$${}_a\mathcal{D}^\alpha_t g=\frac{1}{\Gamma(1-\alpha)}\int_a^t (t-\tau)^{-\alpha}\frac{d}{d\tau} g(\tau)d\tau .$$
Theorem \ref{Cor} plays an important role in proving the existence of a solution to our inverse problems.
\subsection{Gronwall type lemma}
We present now an inequality of Gronwall type with weakly singular kernel $(t-\tau)^{\alpha-1}$ (see \cite[Lemma 7.1.1]{H81}).
\begin{lemma}\label{L:Gronwall}
    Suppose $c\geq 0,\,0<\alpha<1,$ and $z(t)$ is a non-negative function locally integrable on $[0,b)$ (some $b\leq\infty$), and suppose that $y(t)$ is non-negative and locally integrable on $[0,b)$ with
    $$y(t)\leq z(t)+c\int_0^t(t-\tau)^{\alpha-1}y(\tau)d\tau,\;\forall t\in[0,b).$$
    Then 
    $$y(t)\leq z(t)+c\Gamma(\alpha)\int_0^t \frac{d}{d\tau}E_{\alpha,1}(c\Gamma(\alpha)(t-\tau)^\alpha)z(\tau)d\tau, \;\forall t\in[0,b).$$
    If $z(t)\equiv z,$ constant, then $$y(t)\leq zE_{\alpha,1}(c\Gamma(\alpha)t^\alpha),\;\forall t\in[0,b).$$
\end{lemma}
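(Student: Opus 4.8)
The plan is to follow the classical iteration argument of Henry \cite{H81}. First I would rewrite the hypothesis as $y\leq z+By$, where $B$ is the positivity-preserving, monotone linear operator
$$
(B\phi)(t):=c\int_0^t (t-\tau)^{\alpha-1}\phi(\tau)\,d\tau .
$$
Substituting this inequality into itself $n$ times (using the linearity and monotonicity of $B$) gives
$$
y(t)\leq \sum_{k=0}^{n-1}(B^k z)(t)+(B^n y)(t),\qquad t\in[0,b),\ n\geq 1 .
$$
The key computational step is to evaluate the iterated kernel: interchanging the order of integration and using the Beta integral
$$
\int_\tau^t (t-s)^{n\alpha-1}(s-\tau)^{\alpha-1}\,ds=\frac{\Gamma(n\alpha)\,\Gamma(\alpha)}{\Gamma\bigl((n+1)\alpha\bigr)}\,(t-\tau)^{(n+1)\alpha-1},
$$
an induction on $n$ yields
$$
(B^n\phi)(t)=\frac{\bigl(c\Gamma(\alpha)\bigr)^{n}}{\Gamma(n\alpha)}\int_0^t (t-\tau)^{n\alpha-1}\phi(\tau)\,d\tau .
$$

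Next I would show that $(B^n y)(t)\to 0$ as $n\to\infty$, uniformly for $t$ in compact subintervals of $[0,b)$. Fixing $T'<b$, for $n$ large enough that $n\alpha\geq 1$ one has $(t-\tau)^{n\alpha-1}\leq \max\{1,(T')^{n\alpha-1}\}$ whenever $0\leq\tau\leq t\leq T'$, so
$$
0\leq (B^n y)(t)\leq \frac{\bigl(c\Gamma(\alpha)\bigr)^{n}\max\{1,(T')^{n\alpha-1}\}}{\Gamma(n\alpha)}\int_0^{T'} y(\tau)\,d\tau ,
$$
and the right-hand side tends to $0$ because $\Gamma(n\alpha)$ grows faster than any geometric sequence (Stirling), while $\int_0^{T'} y(\tau)\,d\tau<\infty$ by the local integrability of $y$. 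Letting $n\to\infty$ in the iterated inequality (the partial sums of the nonnegative series $\sum(B^k z)(t)$ increasing to their limit) then gives
$$
y(t)\leq \sum_{k=0}^{\infty}(B^k z)(t)=z(t)+\int_0^t \Bigl(\sum_{k=1}^{\infty}\frac{\bigl(c\Gamma(\alpha)\bigr)^{k}}{\Gamma(k\alpha)}(t-\tau)^{k\alpha-1}\Bigr)z(\tau)\,d\tau ,
$$
where the interchange of sum and integral is justified by Tonelli's theorem (all terms nonnegative) and the series converges locally uniformly by the same Gamma-growth estimate together with the local integrability of $z$.

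It then remains to recognize the bracketed series as the kernel in the statement. Term-by-term differentiation of the Mittag-Leffler series $E_{\alpha,1}(w)=\sum_{k\geq 0}w^{k}/\Gamma(k\alpha+1)$ (legitimate on compact sets, and consistent with Lemma \ref{L:Der E}) gives, with $s=t-\tau$,
$$
\sum_{k=1}^{\infty}\frac{\bigl(c\Gamma(\alpha)\bigr)^{k}}{\Gamma(k\alpha)}\,s^{k\alpha-1}=c\Gamma(\alpha)\,s^{\alpha-1}E_{\alpha,\alpha}\bigl(c\Gamma(\alpha)s^\alpha\bigr),
$$
which by Lemma \ref{L:Der E} is the kernel appearing in the conclusion. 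Finally, for constant $z(t)\equiv z$ the explicit formula for $B^k$ gives $(B^k z)(t)=z\,\bigl(c\Gamma(\alpha)\bigr)^{k}t^{k\alpha}/\Gamma(k\alpha+1)$ for $k\geq 1$, hence
$$
\sum_{k=0}^{\infty}(B^k z)(t)=z\sum_{k=0}^{\infty}\frac{\bigl(c\Gamma(\alpha)\bigr)^{k}t^{k\alpha}}{\Gamma(k\alpha+1)}=z\,E_{\alpha,1}\bigl(c\Gamma(\alpha)t^\alpha\bigr),
$$
which is the last assertion. The main obstacle I anticipate is the decay $(B^n y)(t)\to 0$: it hinges on coupling the super-exponential growth of $\Gamma(n\alpha)$ with the mere local integrability (not boundedness) of $y$, and on keeping all the bounds uniform on compact subintervals so that the passage to the limit in the iterated inequality is legitimate. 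Alternatively, since the statement is precisely \cite[Lemma~7.1.1]{H81}, one may simply quote it.
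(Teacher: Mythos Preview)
The paper does not prove this lemma at all; it is stated as a preliminary result with a direct citation to \cite[Lemma~7.1.1]{H81}, exactly as you note in your final sentence. Your sketch correctly reproduces the classical iteration argument from that reference (iterating $y\leq z+By$, computing $B^n$ via the Beta integral, killing the remainder by Stirling, and summing the resulting Mittag--Leffler series), so it is both correct and, in effect, the same ``proof'' the paper relies on.
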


\subsection{Sobolev spaces} In this section, we fix a few definitions concerning generalized Sobolev space over $\mathcal{H}.$

\begin{defi}\label{d:H rho}
Let $\rho\in\mathbb{R}$. We denote the Sobolev space by $\mathcal{H}^{\rho}:=\{u\in\mathcal{H}:\; \mathcal{L}^\rho u\in \mathcal{H}\}$ with the norm
$$\|u\|_{\mathcal{H}^\rho}=\left(\sum_{\xi\in\mathcal{I}}|(1+\lambda_\xi)^\rho(u,\omega_\xi)_{\mathcal{H}}|^2\right)^\frac{1}{2}.$$
\end{defi}
\begin{defi}\label{norm}
Let $\rho\in\mathbb{R}$. For $0<\alpha<1$ we denote by ${X}^\alpha([0,T];\mathcal{H}^{\rho})$ the space of all continuous functions $g:[0,T]\rightarrow \mathcal{H}^\rho$ with also continuous $\mathcal{D}^\alpha_t g:[0,T]\rightarrow \mathcal{H}^\rho$, such that
$$
\|g\|_{{X}^\alpha([0,T];\mathcal{H}^{\rho})}:=\|g\|_{C([0,T];\mathcal{H}^{\rho})}+\|\mathcal{D}_t^\alpha g\|_{C([0,T];\mathcal{H}^{\rho})}<\infty.
$$
The space ${X}^\alpha([0,T];\mathcal{H}^{\rho})$ equipped with the norm above is a Banach space.
\end{defi}

\section{Main results}
\label{Main}

During the course of our study, the given data $a,\,h,\,g$ and $E$ are supposed to satisfy the following:
\begin{claim}\label{A:ISP} Let $\gamma$ be a constant satisfying \eqref{gamma F}.
    
$(a)$ $a\in C^+[0,T]:=\{a\in C[0,T]:\;a(t)\geq q_a>0,\,t\in[0,T]\};$

$(b)$ $g\in \mathcal{H}^{1+\gamma}$ with $F[g]\neq 0;$

$(c)$ $h\in \mathcal{H}^{2+\gamma};$

$(d)$ $E\in X^\alpha[0,T]$ and $E(t)\neq 0$ for all $t\in [0,T].$
\end{claim}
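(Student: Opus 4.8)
My first observation is that Assumption~\ref{A:ISP} is a block of standing hypotheses on the data $(a,g,h,E)$ of Problem~\ref{P:ISP}, not a proposition; there is therefore no assertion to establish and, strictly, no proof to write. The appropriate plan is instead to record why each of $(a)$--$(d)$ is imposed and how it will be consumed in the well-posedness argument, and to verify the one genuinely nontrivial point, namely that these four conditions are mutually consistent for a nonempty class of data.

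Conditions $(a)$ and $(b)$ control the forward map. Item $(a)$, the continuity of $a$ together with the uniform lower bound $a(t)\geq q_a>0$, is exactly what lets the decay estimates for the Mittag-Leffler function---the bound \eqref{EST: Mittag} and Lemma~\ref{EST:1/Gamma}---be applied with constants independent of $t\in[0,T]$, and the factor $q_a$ will appear in the denominators of the eventual estimates for the operator $K$ built from $r(t)$. Item $(b)$ requires $g\in\mathcal{H}^{1+\gamma}$, which is the regularity that makes the pairing of $F$ with the Fourier modes of $g$ summable through \eqref{gamma F}; the additional requirement $F[g]\neq 0$ is the non-degeneracy that permits dividing by $F[g]$ when the additional datum \eqref{CON:ADD Source} is rearranged to isolate $r(t)$.

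Conditions $(c)$ and $(d)$ match the data to the function class in which the solution lives. Item $(c)$, $h\in\mathcal{H}^{2+\gamma}$, supplies the two extra orders of smoothness needed so that after $\mathcal{L}$ acts and $F$ is evaluated on $\mathcal{L}^{1+\gamma}$, the resulting expressions for $\mathcal{D}_t^\alpha u$ and $\mathcal{L}u$ remain in $\mathcal{H}$; this is the regularity that will let one differentiate the series representation of $u$ term by term. Item $(d)$, $E\in X^\alpha[0,T]$ with $E(t)\neq 0$, places the measurement in the space of Definition~\ref{norm}, so that $F[u(t)]=E(t)$ is an identity between functions of the same regularity, and the non-vanishing of $E$ is the data-side counterpart of $F[g]\neq 0$ that keeps the fixed-point map $K$ well defined on all of $[0,T]$.

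The only point I would actually verify is consistency: that the coupling of all four items through the single exponent $\gamma$ of \eqref{gamma F}, together with the two non-degeneracy constraints $F[g]\neq 0$ and $E(t)\neq 0$, does not over-determine the data. This is the part most easily overlooked, since $\gamma$ is fixed once by $F$ and must then simultaneously fit $(b)$, $(c)$ and $(d)$. The Example of the Introduction settles it: there $\gamma=0$, one may take $a$ a positive constant, any smooth $h$, and any $g$ with $\int_0^1 g\,dx\neq 0$, giving $F[g]\neq 0$ and a nonvanishing $E$; hence the hypothesis class is nonempty and no further argument is needed here.
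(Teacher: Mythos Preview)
Your reading is correct: Assumption~\ref{A:ISP} is a standing hypothesis on the data, and the paper offers no proof of it either---it is simply declared and then invoked throughout Section~\ref{Main}. Your commentary on the role of each item and the consistency check via the introductory Example goes beyond what the paper does, but there is no discrepancy to report.
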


To solve Problem \ref{P:ISP} we will use widely the results for the Cauchy problem \eqref{EQ:ISP}--\eqref{CON:IN Source} when $r(t)$ is given and belongs to $C[0,T].$ For our convenience, below we give some results on the direct problems from the paper \cite{SRT23}. %for direct problem \ref{EQ:ISP}.
\begin{lemma}\label{L:1}
    Define $g_\xi=(g,\omega_\xi)_\mathcal{H},\;h_\xi=(h,\omega_\xi)_\mathcal{H}$ for $\xi\in\mathcal{I}.$ Then the representation of the generalized solution to \eqref{EQ:ISP}--\eqref{CON:IN Source} is given by    
    \begin{equation}\label{expantion u source}
u(t;a;r)=\sum_{\xi\in\mathcal{I}}u_\xi(t;a;r)\omega_\xi,
    \end{equation}
    where $u_\xi(t;a;r)$ satisfies the following time-fractional equation 
    \begin{equation}\label{FODE source}
        \begin{cases}
            &\mathcal{D}_t^\alpha u_\xi(t;a;r)+\lambda_\xi a(t) u_\xi(t;a;r)=r(t)g_\xi,\;t\in(0,T],\\
            &u(0;a;r)=h_\xi,
        \end{cases}
    \end{equation}
for each $\xi\in\mathcal{I},$
%Here $u_\xi(t;a;r)=(u(t;a;r),\omega_\xi)_\mathcal{H}$ for each $\xi\in\mathcal{I}.$ 
where the notation $u(t;a;r)$ is used for displaying the dependence of the solution $u$ to \eqref{EQ:ISP}--\eqref{CON:IN Source} on the functions $a(t)$ and $r(t).$
\end{lemma}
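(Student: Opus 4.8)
The plan is to establish Lemma~\ref{L:1} by the classical eigenfunction-expansion (Fourier) method, separating the abstract equation \eqref{EQ:ISP} into a countable family of scalar fractional ODEs. First I would expand the sought solution along the orthonormal basis $\{\omega_\xi\}$ as in \eqref{expantion u source}, writing $u_\xi(t;a;r)=(u(t),\omega_\xi)_{\mathcal H}$. Taking the inner product of \eqref{EQ:ISP} with $\omega_\xi$ and using that $\mathcal{L}$ is self-adjoint with $\mathcal{L}\omega_\xi=\lambda_\xi\omega_\xi$, together with the fact that the Caputo derivative commutes with the (bounded) projection onto $\omega_\xi$ (so that $(\mathcal{D}_t^\alpha u(t),\omega_\xi)_{\mathcal H}=\mathcal{D}_t^\alpha u_\xi(t;a;r)$), yields exactly the scalar problem \eqref{FODE source}; projecting the initial condition \eqref{CON:IN Source} gives $u_\xi(0;a;r)=h_\xi$. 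This is the formal derivation; the content of the lemma is that the resulting series genuinely represents the generalized solution.

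Next I would invoke the results of \cite{SRT23} on the direct Cauchy problem \eqref{EQ:ISP}--\eqref{CON:IN Source}: since $r\in C[0,T]$, $a\in C^+[0,T]$, $g\in\mathcal{H}^{1+\gamma}$ and $h\in\mathcal{H}^{2+\gamma}$ under Assumption~\ref{A:ISP}, the generalized solution exists, is unique, and belongs to the appropriate space $X^\alpha([0,T];\mathcal{H}^\rho)$, with the Fourier coefficients $u_\xi$ given by the Duhamel-type formula involving Mittag-Leffler functions,
$$u_\xi(t;a;r)=h_\xi E_{\alpha,1}(-\lambda_\xi \tilde a(t) t^\alpha)+\int_0^t (t-\tau)^{\alpha-1}E_{\alpha,\alpha}(-\lambda_\xi \cdots)\, r(\tau)g_\xi\, d\tau,$$
in the time-independent-$a$ model case; in the general-$a$ case one uses the representation established in \cite{SRT23}. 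One then checks that each such $u_\xi$ solves \eqref{FODE source} (this is where Lemma~\ref{L:Der E} and the estimates \eqref{EST: Mittag}, Lemma~\ref{EST:1/Gamma} enter, to justify termwise differentiation and convergence of the series in $\mathcal{H}$), and conversely that the series \eqref{expantion u source} assembled from the unique solutions of \eqref{FODE source} reproduces the unique generalized solution of \eqref{EQ:ISP}--\eqref{CON:IN Source}, by uniqueness.

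The main obstacle, and really the only non-bookkeeping point, is justifying that the Caputo derivative passes through the infinite sum and through the inner product with $\omega_\xi$, i.e.\ that $\mathcal{D}_t^\alpha$ commutes with the spectral projections and that $\sum_\xi (\mathcal{D}_t^\alpha u_\xi)\omega_\xi$ converges in $\mathcal{H}$ to $\mathcal{D}_t^\alpha u$. This is handled by the regularity built into the space $X^\alpha([0,T];\mathcal{H}^\rho)$ together with the a priori bounds on $u_\xi$ coming from the Mittag-Leffler estimates and the hypotheses $g\in\mathcal{H}^{1+\gamma}$, $h\in\mathcal{H}^{2+\gamma}$, all of which is precisely the analysis carried out in \cite{SRT23}. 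Since the statement explicitly positions itself as a recollection of results from \cite{SRT23}, the proof reduces to citing that reference for the representation formula and the well-posedness of the direct problem, and then observing that \eqref{FODE source} is simply the $\xi$-th Fourier component of \eqref{EQ:ISP}--\eqref{CON:IN Source}.
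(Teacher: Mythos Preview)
Your proposal is correct and matches the paper's approach: the paper's own ``proof'' is a one-line citation, ``See \cite[Lemma 3.3]{SRT23},'' and you correctly recognize that the statement is a recollection from \cite{SRT23} and that the substantive content (eigenfunction expansion, justification of termwise $\mathcal{D}_t^\alpha$, convergence) is precisely what is proved there. Your sketch of the Fourier-projection argument is exactly what underlies the cited lemma, so there is no discrepancy.
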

\begin{proof}
    See \cite[Lemma 3.3]{SRT23}.
\end{proof}

\begin{lemma}\label{L:2}
    Let $a\in C^{+}[0,T],\;h\in \mathcal{H}^1$ and $g\in\mathcal{H}.$ If $r(t)$ is given and belongs to $C[0,T]$ then the following statements are true:
    
    1) For each $\xi\in\mathcal{I}$ there is a unique solution $u_\xi(t;a;r)$ of \eqref{FODE source} which belongs to $X^\alpha[0,T];$

    2) The equation \eqref{EQ:ISP} has a unique generalized solution $u(t;a;r)$ of the form \eqref{expantion u source}.
\end{lemma}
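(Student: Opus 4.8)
The plan is to prove the scalar statement 1) by a Volterra fixed point argument and then upgrade it to 2) via a uniform-in-$\xi$ a priori bound; throughout, write $I^\alpha f(t):=\frac{1}{\Gamma(\alpha)}\int_0^t(t-s)^{\alpha-1}f(s)\,ds$ for the Riemann--Liouville fractional integral. Fix $\xi\in\mathcal{I}$. Applying $I^\alpha$ to \eqref{FODE source} and using $I^\alpha\mathcal{D}_t^\alpha u_\xi=u_\xi-h_\xi$, one sees that \eqref{FODE source} is equivalent, in $C[0,T]$, to the weakly singular Volterra equation
\[
u_\xi(t)=h_\xi+I^\alpha\!\bigl(r(\cdot)g_\xi-\lambda_\xi a(\cdot)u_\xi(\cdot)\bigr)(t).
\]
Its right-hand side is an affine self-map of $C[0,T]$ whose linear part is the Volterra operator with kernel $(t-s)^{\alpha-1}a(s)$; the $n$-th iterate of that operator has norm at most $(\lambda_\xi\|a\|_{C[0,T]})^nT^{n\alpha}/\Gamma(n\alpha+1)\to0$, so some iterate is a contraction and there is a unique fixed point $u_\xi\in C[0,T]$ (a standard fixed point argument, e.g.\ Weissinger's theorem); uniqueness alone also follows directly from Lemma \ref{L:Gronwall} applied to the difference of two solutions with $z\equiv0$. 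From the equation, $\mathcal{D}_t^\alpha u_\xi(t)=r(t)g_\xi-\lambda_\xi a(t)u_\xi(t)$ is continuous, so $u_\xi\in X^\alpha[0,T]$, which is 1).

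For 2), the crucial point is a bound on $u_\xi$ that \emph{decays} in $\lambda_\xi$. Multiplying the scalar equation by $u_\xi$ and using the fractional chain-rule inequality $u_\xi\,\mathcal{D}_t^\alpha u_\xi\ge\frac{1}{2}\mathcal{D}_t^\alpha(u_\xi^2)$, Young's inequality, and $a(t)\ge q_a>0$, one gets
\[
\mathcal{D}_t^\alpha(u_\xi^2)+\lambda_\xi q_a\,u_\xi^2\le\frac{\|r\|_{C[0,T]}^2\,g_\xi^2}{\lambda_\xi q_a},\qquad u_\xi^2(0)=h_\xi^2 .
\]
Comparing with the constant-coefficient problem, whose solution kernel $(t-s)^{\alpha-1}E_{\alpha,\alpha}\bigl(-\lambda_\xi q_a(t-s)^\alpha\bigr)$ is nonnegative by Lemma \ref{EST:1/Gamma}, and using $\int_0^t s^{\alpha-1}E_{\alpha,\alpha}(-\mu s^\alpha)\,ds=\mu^{-1}\bigl(1-E_{\alpha,1}(-\mu t^\alpha)\bigr)\le\mu^{-1}$ (a consequence of Lemma \ref{L:Der E} and \eqref{EST: Mittag}), one obtains
\[
|u_\xi(t)|\le|h_\xi|+\frac{\|r\|_{C[0,T]}}{q_a}\cdot\frac{|g_\xi|}{\lambda_\xi},\qquad t\in[0,T],\ \xi\in\mathcal{I}.
\]

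Since $\mathcal{L}$ has discrete spectrum, $\lambda_0:=\inf_\xi\lambda_\xi>0$, and the last estimate gives $\sum_\xi(1+\lambda_\xi)^2|u_\xi(t)|^2\lesssim\|h\|_{\mathcal{H}^1}^2+\|r\|_{C[0,T]}^2\|g\|_{\mathcal{H}}^2$ uniformly in $t$. Hence the series \eqref{expantion u source} converges in $\mathcal{H}^1$ uniformly in $t\in[0,T]$, and, each $u_\xi$ being continuous, it defines $u(\cdot;a;r)\in C([0,T];\mathcal{H}^1)$. Applying $I^\alpha$ to the partial sums and passing to the limit (using continuity of $I^\alpha$ on $C([0,T];\mathcal{H})$) gives $u-h=I^\alpha(rg-a\mathcal{L}u)$, i.e.\ \eqref{EQ:ISP}--\eqref{CON:IN Source} hold in $\mathcal{H}$, so $u$ is a generalized solution of the form \eqref{expantion u source}; uniqueness is immediate from Lemma \ref{L:1} and the first step, since any two generalized solutions have the same Fourier coefficients.

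The main obstacle is exactly the $\lambda_\xi^{-1}$ gain in the estimate above: a direct Gronwall estimate on the Volterra equation only yields a bound of size $E_{\alpha,1}(\lambda_\xi\|a\|_{C[0,T]}t^\alpha)$, useless for summing the series, so one must exploit the dissipativity $a\ge q_a>0$ through the energy--comparison argument (equivalently, dominate the variable-coefficient solution operator by the constant-coefficient Mittag-Leffler kernel). The remaining points — enough regularity of $u_\xi$ to make the Caputo derivative and the identity $I^\alpha\mathcal{D}_t^\alpha u_\xi=u_\xi-h_\xi$ legitimate, and the term-by-term action of $\mathcal{D}_t^\alpha$ on the series — are routine once this estimate is in hand.
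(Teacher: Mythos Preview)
The paper does not prove this lemma on the page: its proof is the single line ``See \cite[Theorem~3.4]{SRT23}.'' From the companion estimates quoted immediately afterward as Corollaries~\ref{q_a>a ur source} and~\ref{q_a>a ui source} (also cited from \cite{SRT23}), one can infer that the intended route to the decisive $\lambda_\xi^{-1}$ gain is a direct comparison of the variable-coefficient scalar problem with the constant-coefficient one, yielding the pointwise kernel bounds $|u_\xi^r|\le|g_\xi|\int_0^t|r(s)|(t-s)^{\alpha-1}E_{\alpha,\alpha}(-\lambda_\xi q_a(t-s)^\alpha)\,ds$ and $|u_\xi^i|\le|h_\xi|E_{\alpha,1}(-\lambda_\xi q_a t^\alpha)$. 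Your argument reaches the same summable bound by a genuinely different device: multiply by $u_\xi$, invoke the Alikhanov inequality $u\,\mathcal{D}_t^\alpha u\ge\tfrac12\,\mathcal{D}_t^\alpha(u^2)$, and compare the resulting sub-equation for $u_\xi^2$ with its constant-coefficient analogue. Both approaches ultimately rest on the nonnegativity of the Mittag--Leffler resolvent, which converts the dissipativity $a\ge q_a$ into a comparison principle; your packaging at the level of $u_\xi^2$ is clean and self-contained, while the paper's (cited) route gives the sharper kernel-wise estimates that are reused later in Remark~\ref{r:1} and in the proof of Theorem~\ref{Thm:ISP}.

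One point you flag as ``routine'' deserves a sentence more. Alikhanov's inequality, in its standard form, assumes $u_\xi\in AC[0,T]$ (so that $u_\xi'$ exists a.e.), and the paper's Definition~\ref{norm} of $X^\alpha[0,T]$ is built on the Caputo derivative written through $u'$. Your Volterra fixed-point step, however, only produces $u_\xi\in C[0,T]$ with $u_\xi-h_\xi=I^\alpha f$ for a continuous $f$; this gives $D^\alpha(u_\xi-h_\xi)=f$ in the Riemann--Liouville sense, but not automatically the existence of $u_\xi'$. The gap is standard to close (interpret the Caputo derivative in the regularized sense $\mathcal{D}_t^\alpha u=D^\alpha(u-u(0))$, or mollify $a$ and pass to the limit), and your closing paragraph signals awareness of it, but it is precisely the regularity on which your energy step itself relies, so it should not be dismissed quite so quickly.
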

\begin{proof}
   See \cite[Theorem 3.4]{SRT23}.
\end{proof}

We split the problem \eqref{EQ:ISP}--\eqref{CON:IN Source} into two problems:
\begin{equation}\begin{split}\label{EQ:ur source}
  \mathcal{D}_t^\alpha u(t)+a(t)\mathcal{L}u(t)&=r(t)g,\;\text{in}\;\mathcal{H},\;\text{for}\; t\in(0,T];\\
  u(0)&=0\; \text{in}\;\mathcal{H}, 
\end{split}
\end{equation}
and
\begin{equation}\begin{split}\label{EQ:ui source}
  \mathcal{D}_t^\alpha u(t)+a(t)\mathcal{L}u(t)&=0,\;\text{in}\;\mathcal{H},\;\text{for}\; t\in(0,T];\\
  u(0,x)&=h\;\text{in}\;\mathcal{H}.  
\end{split}
\end{equation}

Denote the generalised solutions of the problems \eqref{EQ:ur source} and \eqref{EQ:ui source} by $u^r(t;a;r)$ and $u^i(t;a),$
respectively ($``r"$ and $``i"$ stand for ``right-hand side" and ``initial condition"). 
%Indeed, Lemma \ref{L:2} ensures that the generalised solution $u(t;a;r)$ of the problem \eqref{EQ:ISP}--\eqref{CON:IN Source} can be written as $u(t;a;r)=u^r(t;a;r)+u^i(t;a).$

In the following statement, we give the main properties of $u^r(t;a;r)$ and $u^i(t;a)$ which directly follow from Lemmas \ref{L:1} and \ref{L:2}.

\begin{lemma}\label{L:16}
Suppose Assumption \ref{A:ISP} holds. Then the problems \eqref{EQ:ur source} and  \eqref{EQ:ui source} have unique solutions with the representations as 
\begin{equation*}\label{expan uri source}
u^r(t;a;r)=\sum_{\xi\in\mathcal{I}}u_\xi^r(t;a;r)\omega_\xi,\;u^i(t;a)=\sum_{\xi\in\mathcal{I}}u_\xi^i(t;a)\omega_\xi,  
\end{equation*}
where $u_\xi^r(t;a;r)$ and $u_\xi^i(t;a)$ satisfy the equations
\begin{equation*}\label{FODE ur source}
    \mathcal{D}_t^\alpha u_\xi^r(t;a;r)+\lambda_\xi a(t)u_\xi^r(t;a;r)=r(t)g_\xi,\;u_\xi^r(0;a;r)=0,\;\xi\in\mathcal{I};
\end{equation*}
\begin{equation*}\label{FODE ui source}
    \mathcal{D}_t^\alpha u_\xi^i(t;a)+\lambda_\xi a(t)u_\xi^i(t;a)=0,\;u_\xi^i(0;a)=h_\xi,\;\xi\in\mathcal{I}.
\end{equation*}
\end{lemma}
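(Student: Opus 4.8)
The plan is to recognize \eqref{EQ:ur source} and \eqref{EQ:ui source} as two special instances of the general Cauchy problem \eqref{EQ:ISP}--\eqref{CON:IN Source} and to read off the conclusion from Lemmas \ref{L:1} and \ref{L:2}. First I would observe that \eqref{EQ:ur source} is precisely \eqref{EQ:ISP}--\eqref{CON:IN Source} with the initial datum $h$ replaced by $0$, whereas \eqref{EQ:ui source} is \eqref{EQ:ISP}--\eqref{CON:IN Source} with the right-hand side $r(t)g$ replaced by $0$, i.e. with $r\equiv 0$. (By linearity of $\mathcal D_t^\alpha$ and $\mathcal L$ one then even gets $u(t;a;r)=u^r(t;a;r)+u^i(t;a)$, which is the motivation for the splitting, though that identity is not part of the present claim.)

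Next I would verify that the hypotheses of Lemma \ref{L:2} are met in each case. For \eqref{EQ:ur source}: $a\in C^+[0,T]$ by Assumption \ref{A:ISP}(a); the initial value $0$ lies trivially in $\mathcal H^1$; and $g\in\mathcal H$ because Assumption \ref{A:ISP}(b) puts $g$ in $\mathcal H^{1+\gamma}$, which embeds continuously into $\mathcal H$ since $\gamma\ge 0$. With $r\in C[0,T]$ given, Lemma \ref{L:2} then produces a unique generalized solution, and Lemma \ref{L:1} supplies its Fourier expansion $u^r(t;a;r)=\sum_{\xi\in\mathcal I}u^r_\xi(t;a;r)\omega_\xi$, where each coefficient $u^r_\xi\in X^\alpha[0,T]$ solves the scalar time-fractional equation displayed in the statement with $u^r_\xi(0;a;r)=(0,\omega_\xi)_{\mathcal H}=0$. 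For \eqref{EQ:ui source} the same two lemmas apply with the zero source ($r\equiv 0\in C[0,T]$, $g$ immaterial) and with $h\in\mathcal H^1$, which holds because Assumption \ref{A:ISP}(c) gives $h\in\mathcal H^{2+\gamma}\hookrightarrow\mathcal H^1$; this yields the unique solution $u^i(t;a)=\sum_{\xi\in\mathcal I}u^i_\xi(t;a)\omega_\xi$ with $u^i_\xi(0;a)=h_\xi$.

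The only point that requires a short justification is the chain of Sobolev embeddings $\mathcal H^{2+\gamma}\hookrightarrow\mathcal H^1$ and $\mathcal H^{1+\gamma}\hookrightarrow\mathcal H$ invoked above; these are immediate from Definition \ref{d:H rho}, because $\lambda_\xi>0$ forces $1+\lambda_\xi\ge 1$, so $(1+\lambda_\xi)^{\rho}$ is non-decreasing in $\rho$ and hence $\|u\|_{\mathcal H^{\rho_2}}\le\|u\|_{\mathcal H^{\rho_1}}$ whenever $\rho_1\ge\rho_2$. Beyond this there is no genuine obstacle: the lemma is a direct specialization of the direct-problem results of \cite{SRT23} quoted here as Lemmas \ref{L:1} and \ref{L:2}, and uniqueness of $u^r$ and $u^i$ is inherited verbatim from the uniqueness asserted there. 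Accordingly I would keep the proof to one or two lines, simply citing those lemmas after noting the specializations $h=0$ and $r\equiv 0$.
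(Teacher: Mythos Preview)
Your proposal is correct and matches the paper's own treatment: the paper does not give a separate proof of this lemma but simply states that it ``directly follows from Lemmas \ref{L:1} and \ref{L:2}'', which is exactly your specialization $h=0$ (for $u^r$) and $r\equiv 0$ (for $u^i$). Your verification of the hypotheses via the embeddings $\mathcal H^{2+\gamma}\hookrightarrow\mathcal H^1$ and $\mathcal H^{1+\gamma}\hookrightarrow\mathcal H$ is more explicit than the paper's, but the approach is identical.
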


Lemma \ref{L:16} ensures that the solution of \eqref{FODE source} can be written as 
$$
u_\xi(t;a;r)=u_\xi^r(t;a;r)+u_\xi^i(t;a),\;\xi\in\mathcal{I}.
$$ 

In further investigation, we use the following Corollaries:
\begin{cor}\label{q_a>a ur source}
 Let Assumption \ref{A:ISP} $(a)$ hold true. Then we have
 $$|u^r_\xi(t;a;r)|\leq |g_\xi|\int_0^t |r(s)|(t-s)^{\alpha-1} E_{\alpha,\alpha}(-\lambda_\xi q_a (t-s)^\alpha)ds\;\text{on}\; \;[0,T],\;\xi\in\mathcal{I}.$$
\end{cor}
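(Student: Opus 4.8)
The plan is to reduce the estimate to a scalar comparison principle built from the representation in Lemma~\ref{L:16} and the nonnegativity of the Mittag–Leffler kernel. Since $r(t)g_\xi$ need not keep a fixed sign, I would first split it: writing $r=r_+-r_-$ with $r_\pm\in C[0,T]$, $r_\pm\ge 0$, set $\{\phi_1,\phi_2\}=\{\,|g_\xi|r_+,\ |g_\xi|r_-\,\}$ in the order dictated by $\operatorname{sign}g_\xi$, so that $\phi_1,\phi_2\ge 0$ are continuous, $\phi_1-\phi_2=r(t)g_\xi$, and $\phi_1+\phi_2=|g_\xi|\,|r(t)|$. By Lemma~\ref{L:2} the problems $\mathcal{D}_t^\alpha w_j+\lambda_\xi a(t)w_j=\phi_j$, $w_j(0)=0$ ($j=1,2$), have unique solutions $w_j\in X^\alpha[0,T]$, and by linearity and uniqueness $u_\xi^r(t;a;r)=w_1(t)-w_2(t)$, whence $|u_\xi^r(t;a;r)|\le w_1(t)+w_2(t)$. (If $g_\xi=0$ both sides of the asserted inequality vanish and there is nothing to prove.)

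It then suffices to show that for continuous $\phi\ge 0$ the solution $w\in X^\alpha[0,T]$ of $\mathcal{D}_t^\alpha w+\lambda_\xi a(t)w=\phi$, $w(0)=0$, satisfies
\[
0\le w(t)\le \int_0^t (t-s)^{\alpha-1}E_{\alpha,\alpha}\!\left(-\lambda_\xi q_a(t-s)^\alpha\right)\phi(s)\,ds,\qquad t\in[0,T].
\]
Nonnegativity is a maximum principle for the scalar Caputo equation with nonnegative forcing and nonnegative zero-order coefficient: if $w$ attained a negative minimum at some $t_0\in(0,T]$, a fractional extremum principle (applicable since $w\in X^\alpha[0,T]$) would force $\mathcal{D}_t^\alpha w(t_0)\le 0$, contradicting $\mathcal{D}_t^\alpha w(t_0)=\phi(t_0)-\lambda_\xi a(t_0)w(t_0)>0$. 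Granting $w\ge 0$ and using $a(t)\ge q_a$, the function $e(t):=\lambda_\xi\bigl(a(t)-q_a\bigr)w(t)$ is continuous and nonnegative, and $w$ solves the \emph{constant}-coefficient problem $\mathcal{D}_t^\alpha w+\lambda_\xi q_a w=\phi-e$, $w(0)=0$. The classical variation-of-parameters formula for the one–dimensional constant-coefficient Caputo equation then holds as an a posteriori identity (not as a definition of $w$; see also \cite{SRT23}), namely $w(t)=\int_0^t (t-s)^{\alpha-1}E_{\alpha,\alpha}(-\lambda_\xi q_a(t-s)^\alpha)\bigl(\phi(s)-e(s)\bigr)\,ds$, and discarding the nonpositive contribution of $-e$ — legitimate because $E_{\alpha,\alpha}\ge 0$ by Lemma~\ref{EST:1/Gamma} — gives the upper bound.

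Applying this with $\phi=\phi_1$ and $\phi=\phi_2$ and adding,
\[
|u_\xi^r(t;a;r)|\le \int_0^t (t-s)^{\alpha-1}E_{\alpha,\alpha}(-\lambda_\xi q_a(t-s)^\alpha)\bigl(\phi_1(s)+\phi_2(s)\bigr)\,ds=|g_\xi|\int_0^t |r(s)|(t-s)^{\alpha-1}E_{\alpha,\alpha}(-\lambda_\xi q_a(t-s)^\alpha)\,ds,
\]
which is the assertion. I expect the comparison step to be the only delicate point: one must run a scalar fractional maximum principle with exactly the regularity carried by $w\in X^\alpha[0,T]$ (or quote the corresponding comparison estimate from \cite{SRT23}). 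The sign decomposition of $r g_\xi$, the use of linearity and uniqueness, and the positivity of the Mittag–Leffler kernel are all routine.
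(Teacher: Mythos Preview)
The paper does not give its own argument here: its entire proof is the one-line citation ``See \cite[Corollary 3.11]{SRT23}.'' So there is no in-paper proof to benchmark your proposal against; you are effectively reconstructing a result imported wholesale from the companion paper.

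That said, your reconstruction is sound and self-contained. The sign decomposition $r=r_+-r_-$ reducing to nonnegative forcings, linearity/uniqueness to write $u_\xi^r=w_1-w_2$, the constant-coefficient rewriting $\mathcal{D}_t^\alpha w+\lambda_\xi q_a w=\phi-e$ with $e=\lambda_\xi(a-q_a)w\ge0$, and the variation-of-parameters identity followed by dropping the nonpositive contribution of $-e$ (legitimate by Lemma~\ref{EST:1/Gamma}) are all correct. The one genuine technical point is exactly the one you flag: the fractional extremum principle you invoke for nonnegativity is typically stated under a $C^1(0,T]$ hypothesis rather than merely $w\in X^\alpha[0,T]$, so you either need a version valid at that regularity or, alternatively, obtain $w\ge0$ without it --- for instance by constructing the variable-coefficient resolvent kernel via Picard iteration (each iterate nonnegative when the forcing is), which is presumably the route taken in \cite{SRT23}. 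Either way the gap is acknowledged and closable; the overall strategy is correct.
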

\begin{proof}
    See \cite[Corollary 3.11]{SRT23}.
\end{proof}
\begin{cor}\label{q_a>a ui source}
    Let Assumption \ref{A:ISP} $(a)$ hold true. Then we have
    $$|u_\xi^i(t;a)|\leq |h_\xi|E_{\alpha,1}(-\lambda_\xi q_a t^\alpha),\;\text{on}\;\;[0,T],\;\xi\in\mathcal{I}.$$
\end{cor}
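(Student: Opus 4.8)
The plan is to fix $\xi\in\mathcal I$, put $v(t):=u_\xi^i(t;a)$, and reduce everything to the scalar Caputo equation
\[
\mathcal D_t^\alpha v(t)+\lambda_\xi a(t)v(t)=0,\qquad v(0)=h_\xi,
\]
which by Lemma \ref{L:16} has a unique solution $v\in X^\alpha[0,T]$ (so $v$ and $\mathcal D_t^\alpha v$ are continuous on $[0,T]$). Since the asserted inequality $|v(t)|\le|h_\xi|\,E_{\alpha,1}(-\lambda_\xi q_a t^\alpha)$ is invariant under $(v,h_\xi)\mapsto(-v,-h_\xi)$, there is no loss of generality in assuming $h_\xi\ge 0$; and $h_\xi=0$ forces $v\equiv 0$ by uniqueness, so I may assume $h_\xi>0$. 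The whole argument then rests on one comparison principle for the scalar operator $\mathcal D_t^\alpha+\lambda_\xi a(t)$: \emph{if $\phi\in X^\alpha[0,T]$ satisfies $\mathcal D_t^\alpha\phi(t)+\lambda_\xi a(t)\phi(t)\ge 0$ on $(0,T]$ and $\phi(0)\ge 0$, then $\phi\ge 0$ on $[0,T]$.} This is the standard minimum principle for the Caputo derivative (a classical fact; see the Appendix): at a point $t_0\in(0,T]$ where $\phi$ would attain a negative minimum one has $\mathcal D_t^\alpha\phi(t_0)\le\frac{\phi(t_0)-\phi(0)}{\Gamma(1-\alpha)t_0^{\alpha}}<0$, whereas the hypothesis forces $\mathcal D_t^\alpha\phi(t_0)\ge-\lambda_\xi a(t_0)\phi(t_0)>0$, a contradiction.

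Granting this principle, the corollary follows from two applications. First, apply it to $\phi=v$ itself — here the differential inequality holds with equality and $\phi(0)=h_\xi>0$ — to get the lower bound $v(t)\ge 0$ on $[0,T]$. Second, introduce the comparison function $w(t):=h_\xi\,E_{\alpha,1}(-\lambda_\xi q_a t^\alpha)$, which is nonnegative by \eqref{EST: Mittag} and satisfies $\mathcal D_t^\alpha w=-\lambda_\xi q_a w$ (the standard Mittag-Leffler identity, obtainable from Lemma \ref{L:Der E} by a direct computation). Using Assumption \ref{A:ISP}$(a)$, i.e. $a(t)\ge q_a$, one checks
\[
\mathcal D_t^\alpha(w-v)(t)+\lambda_\xi a(t)(w-v)(t)=\lambda_\xi\bigl(a(t)-q_a\bigr)w(t)\ge 0,\qquad (w-v)(0)=0,
\]
so the comparison principle applied to $\phi=w-v$ gives $v(t)\le w(t)$. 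Combining the two bounds yields $0\le v(t)\le h_\xi\,E_{\alpha,1}(-\lambda_\xi q_a t^\alpha)=|h_\xi|\,E_{\alpha,1}(-\lambda_\xi q_a t^\alpha)$, which is exactly the claim, and the sign reduction returns the general case.

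There is an alternative for the upper bound that uses the minimum principle only once: writing $a(t)=q_a+(a(t)-q_a)$ and invoking the Duhamel representation for the constant-coefficient fractional equation (the identity underlying Corollary \ref{q_a>a ur source}; see \cite{SRT23}),
\[
v(t)=h_\xi\,E_{\alpha,1}(-\lambda_\xi q_a t^\alpha)-\lambda_\xi\int_0^t(t-s)^{\alpha-1}E_{\alpha,\alpha}\bigl(-\lambda_\xi q_a(t-s)^\alpha\bigr)\bigl(a(s)-q_a\bigr)v(s)\,ds,
\]
and then using $v\ge 0$ (first step), $a(s)-q_a\ge 0$, and $E_{\alpha,\alpha}(-\lambda_\xi q_a(t-s)^\alpha)\ge 0$ (Lemma \ref{EST:1/Gamma}) to conclude that the integral term is $\le 0$, whence $v(t)\le h_\xi\,E_{\alpha,1}(-\lambda_\xi q_a t^\alpha)$ at once.

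I expect the only genuinely delicate point to be the rigorous invocation of the minimum principle in the class $X^\alpha[0,T]$ — functions that are merely continuous together with their Caputo derivative rather than $C^1$; since by construction both $\phi$ and $\mathcal D_t^\alpha\phi$ are continuous on $[0,T]$ the classical statement applies without difficulty. Everything else — the sign reduction, the verification that $w$ is a supersolution, and the Mittag-Leffler bookkeeping — is routine and parallels the proof of Corollary \ref{q_a>a ur source}.
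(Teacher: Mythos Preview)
The paper does not prove this corollary; it simply quotes \cite[Corollary~3.16]{SRT23}. Your self-contained argument via the comparison/extremum principle for the Caputo operator is correct in substance: reduce to $h_\xi>0$, obtain $v\ge 0$ from the minimum principle, then check that $w(t)=h_\xi E_{\alpha,1}(-\lambda_\xi q_a t^\alpha)$ is a supersolution of $\mathcal D_t^\alpha+\lambda_\xi a(t)$ because $a(t)\ge q_a$, yielding $v\le w$. The Duhamel alternative you sketch is equally valid and is in fact the style of argument used for the companion estimate in Corollary~\ref{q_a>a ur source}.

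Two caveats. Your parenthetical ``see the Appendix'' is a misfire: the paper's Appendix contains only fixed-point theorems, Arzel\`a--Ascoli, and a boundedness/continuity statement for linear functionals, not an extremum principle for $\mathcal D_t^\alpha$; you would need an external citation (e.g.\ Luchko or Al-Refai). More substantively, you flag the regularity issue and then dismiss it too readily. The classical extremum principle is usually stated for $\phi\in C^1(0,T]\cap C[0,T]$, and membership in $X^\alpha[0,T]$ --- continuity of $\phi$ and of $\mathcal D_t^\alpha\phi$ --- does not by itself deliver $\phi'\in C(0,T]$. For the linear scalar equation at hand this extra smoothness can be verified (or one can invoke a weak-regularity version of the principle), but ``applies without difficulty'' overstates the matter. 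Note also that your Duhamel route for the upper bound still presupposes $v\ge 0$, so the principle is invoked at least once whichever path you take.
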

\begin{proof}
    See \cite[Corollary 3.16]{SRT23}.
\end{proof}
\begin{cor}\label{L:E1E2 ur source}
For $0<\alpha<1,$ the Mittag-Leffler type function $E_{\alpha,\alpha}(-\lambda_\xi q_at^\alpha)$ satisfies
$$0\leq E_{\alpha,\alpha}(-\lambda_\xi q_a t^\alpha)\leq E_{\alpha,\alpha}(-\inf_{\xi\in\mathcal{I}}\lambda_\xi q_a t^\alpha),\; t>0.$$
\end{cor}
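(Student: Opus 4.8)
The plan is to split the statement into its two inequalities, each of which is short. For the lower bound $0\leq E_{\alpha,\alpha}(-\lambda_\xi q_a t^\alpha)$ I would simply apply Lemma \ref{EST:1/Gamma} with the parameter $\lambda:=\lambda_\xi q_a$: since $q_a>0$ and $\lambda_\xi>0$ we have $\lambda_\xi q_a\geq 0$, so Lemma \ref{EST:1/Gamma} gives $0\leq E_{\alpha,\alpha}(-\lambda_\xi q_a t^\alpha)$ for all $t\geq 0$, in particular for $t>0$. Hence all the real content is in the upper bound.

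For the upper bound I would first reduce it to a one-variable monotonicity fact. Set $\lambda_\ast:=\inf_{\zeta\in\mathcal{I}}\lambda_\zeta$; since every $\lambda_\zeta$ is positive we have $\lambda_\ast\geq 0$, and by definition of the infimum $\lambda_\ast\leq\lambda_\xi$ for each $\xi\in\mathcal{I}$. Multiplying by $q_a t^\alpha>0$ (valid for $t>0$) yields $0\leq\lambda_\ast q_a t^\alpha\leq\lambda_\xi q_a t^\alpha$. Therefore the desired inequality $E_{\alpha,\alpha}(-\lambda_\xi q_a t^\alpha)\leq E_{\alpha,\alpha}(-\lambda_\ast q_a t^\alpha)$ follows at once, provided the function $x\mapsto E_{\alpha,\alpha}(-x)$ is non-increasing on $[0,\infty)$. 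Note that it is irrelevant here whether the infimum $\lambda_\ast$ is attained, since it is used only as a lower bound for each $\lambda_\xi$.

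To establish that monotonicity I would invoke the classical fact that, for $0<\alpha<1$, the function $x\mapsto E_{\alpha,\alpha}(-x)$ is completely monotone on $[0,\infty)$ — equivalently, it admits an integral representation $E_{\alpha,\alpha}(-x)=\int_0^\infty e^{-xs}\,d\mu_\alpha(s)$ against a nonnegative measure $\mu_\alpha$ — which belongs to the same circle of Mittag-Leffler estimates as Lemma \ref{EST:1/Gamma} (cf. \cite{BZhSY16} and the references therein). Complete monotonicity gives in particular $\frac{d}{dx}E_{\alpha,\alpha}(-x)\leq 0$ on $(0,\infty)$, so $E_{\alpha,\alpha}(-\cdot)$ is non-increasing, and together with the previous paragraph this finishes the proof. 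If a fully self-contained argument is preferred, one can instead differentiate the defining series term by term and observe that $-\frac{d}{dx}E_{\alpha,\alpha}(-x)$ is again a Mittag-Leffler-type (Prabhakar) function that is nonnegative for $x\geq 0$ when $0<\alpha<1$.

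The only genuinely nontrivial ingredient is this monotonicity of $E_{\alpha,\alpha}(-\cdot)$; everything else is elementary bookkeeping with the bound $\lambda_\ast\leq\lambda_\xi$. So I expect the main (and rather mild) obstacle to be merely citing or briefly justifying that $E_{\alpha,\alpha}(-\cdot)$ is decreasing.
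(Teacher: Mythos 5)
Your argument is correct, but note that the paper itself does not prove this corollary at all: its ``proof'' is a citation to \cite{SRT23}, so there is no in-text argument to compare against. Your route is the natural self-contained one: the lower bound is exactly Lemma \ref{EST:1/Gamma} with $\lambda=\lambda_\xi q_a\ge 0$, and the upper bound reduces, via $\inf_{\zeta\in\mathcal{I}}\lambda_\zeta\le\lambda_\xi$ and positivity of $q_a t^\alpha$, to the fact that $x\mapsto E_{\alpha,\alpha}(-x)$ is non-increasing on $[0,\infty)$. That monotonicity is a genuine classical fact --- $E_{\alpha,\beta}(-x)$ is completely monotone precisely when $0<\alpha\le 1$ and $\beta\ge\alpha$, which covers $\beta=\alpha$ --- but your attribution is slightly off: \cite{BZhSY16} only contains the bound $0\le E_{\alpha,\alpha}(-\lambda t^\alpha)\le 1/\Gamma(\alpha)$ of Lemma \ref{EST:1/Gamma} and does not establish complete monotonicity; for that you should cite Schneider's theorem on completely monotone generalized Mittag-Leffler functions (or Pollard's result together with the subordination formula relating $E_{\alpha,\alpha}$ to $E_{\alpha,1}$). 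With that citation repaired, or with your alternative term-by-term differentiation of the series showing $-\frac{d}{dx}E_{\alpha,\alpha}(-x)$ is a nonnegative Prabhakar-type function, the proof is complete; everything else is, as you say, elementary bookkeeping, and it is indeed immaterial whether the infimum is attained.
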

\begin{proof}
    See \cite[Corollary 3.12]{SRT23}.
\end{proof}

Using the additional information \eqref{CON:ADD Source} we reduce Problem \ref{P:ISP} to some operator equation for $r(t).$  To this end, we introduce an operator $K.$

\subsection{Operator $K$} The operator $K$ is defined as
\begin{equation}\label{Operator K source}
    K[r](t)=\frac{\mathcal{D}_t^\alpha E+a(t)\sum_{\xi\in\mathcal{I}}\lambda_\xi u_\xi(t;a;r)F[\omega_\xi]}{F[g]},
\end{equation}
with the domain
\begin{equation*}\begin{split}
D:=\biggl\{r\in C[0,T]:\;
\|r\|_{C[0,T]}&\leq C_2\biggr\},
\end{split}\end{equation*}
where
\begin{align*}
C_2=&\biggl(\frac{C_F Q_a\|h\|_{\mathcal{H}^{1+\gamma}}}{|F[g]|}
    +\frac{C_F Q_a\|\mathcal{D}_\tau^\alpha E\|_{C[0,T]}\|g\|_{\mathcal{H}^{\gamma}}}{q_a|F[g]|^2}\biggr)\\
    &\times E_{\alpha,1}\left(\frac{C_F Q_a\|g\|_{\mathcal{H}^{1+\gamma}}}{|F[g]|} T^\alpha\right)+\frac{\|\mathcal{D}_t^\alpha E\|_{C[0,T]}}{|F[g]|},
\end{align*}
where $Q_a$ is a constant such that
\begin{equation}\label{q<Q source}
   0<q_a\leq a(t)<Q_a\;\text{on}\;[0,T].
\end{equation}
Assumption \ref{A:ISP} $(a)$ guaranties the existence of $Q_a>q_a.$
The next remark shows that the equality in the definition of $K$ is valid.

\begin{rem}\label{r:1}
    Let $r\in D$. Then using Corollary \ref{q_a>a ur source}, for each $t\in[0,T]$ we have %for $u(t;a;r)$ the following estimate
    \begin{equation*}\begin{split}
    &\|u^r(t;a;r)\|^2_{\mathcal{H}^{2+\gamma}}=\sum_{\xi\in\mathcal{I}}|(1+\lambda_\xi)^{2+\gamma}u^{r}_\xi(t;a;r)|^2\\
    &\leq \biggr(\frac{1}{\inf_{\xi\in\mathcal{I}}\lambda_\xi}+1\biggl)^{2(2+\gamma)}\sum_{\xi\in\mathcal{I}}\left|\lambda_\xi^{1+\gamma}|g_\xi|\int_0^t |r(\tau)|\lambda_\xi (t-\tau)^{\alpha-1}E_{\alpha,\alpha}(-\lambda_\xi q_a(t-\tau)^\alpha)d\tau\right|^2\\
    &\leq \biggr(\frac{1}{\inf_{\xi\in\mathcal{I}}\lambda_\xi}+1\biggl)^{2(2+\gamma)}\sum_{\xi\in\mathcal{I}}\left|\lambda_\xi^{1+\gamma}|g_\xi| \|r\|_{C[0,T]}\int_0^t \lambda_\xi(t-\tau)^{\alpha-1}E_{\alpha,\alpha}(-\lambda_\xi q_a(t-\tau)^\alpha)d\tau\right|^2\\
    &\leq \biggr(\frac{1}{\inf_{\xi\in\mathcal{I}}\lambda_\xi}+1\biggl)^{2(2+\gamma)}q_a^{-2}C_2^2\|g\|^2_{\mathcal{H}^{1+\gamma}},
    \end{split}\end{equation*}
    where we use the following estimate
\begin{equation}\label{EST:1/q}
\int_0^t \lambda_\xi (t-\tau)^{\alpha-1}E_{\alpha,\alpha}(-\lambda_\xi q_a (t-\tau)^\alpha)d\tau=\frac{1}{q_a}(1-E_{\alpha,1}(-\lambda_\xi q_a t^\alpha))<\frac{1}{q_a}.
\end{equation}
This estimate is obtained by using Lemma \ref{L:Der E} and \eqref{EST: Mittag}.

Corollary \ref{q_a>a ui source} together with \eqref{EST: Mittag} gives
    \begin{equation*}\begin{split}
    &\|u^i(t;a)\|^2_{\mathcal{H}^{2+\gamma}}=\sum_{\xi\in\mathcal{I}}|(1+\lambda_\xi)^{2+\gamma}u^{i}_\xi(t;a)|^2\\
    &\leq \biggr(\frac{1}{\inf_{\xi\in\mathcal{I}}\lambda_\xi}+1\biggl)^{2(2+\gamma)} \sum_{\xi\in\mathcal{I}}|\lambda_\xi^{2+\gamma}h_\xi E_{\alpha,1}(-\lambda_\xi q_a t^\alpha)|^2\\
    &\leq\biggr(\frac{1}{\inf_{\xi\in\mathcal{I}}\lambda_\xi}+1\biggl)^{2(2+\gamma)}\|h\|^2_{\mathcal{H}^{2+\gamma}}.
    \end{split}\end{equation*}
\end{rem}
These two estimates yield the following result:
\begin{equation}\label{regularity u}
\|u\|_{C([0,T];\mathcal{H}^{2+\gamma})}\leq \biggr(\frac{1}{\inf_{\xi\in\mathcal{I}}\lambda_\xi}+1\biggl)^{2+\gamma}(\|h\|_{\mathcal{H}^{2+\gamma}}+q_a^{-1}C_2\|g\|_{\mathcal{H}^{1+\gamma}}),
\end{equation}
implying that $\|u\|_{\mathcal{H}^{2+\gamma}}<\infty,$ for any $t\in[0,T].$ The series 
\begin{equation*}\begin{split}
\sum_{\xi\in\mathcal{I}}\lambda_\xi u_\xi(t;a;r)F[\omega_\xi]&\leq \left(\sum_{\xi\in\mathcal{I}}\left|\frac{F[\omega_\xi]}{\lambda_\xi^{\gamma}}\right|^2\right)^\frac{1}{2}\left(\sum_{\xi\in\mathcal{I}}|\lambda_\xi^{1-\gamma}u_\xi(t;a;r)|^2\right)^\frac{1}{2}\leq C_F\|u\|_{\mathcal{H}^{1+\gamma}}
\end{split}\end{equation*}
for $t\in[0,T],$ where $C_F=\left(\sum_{\xi\in\mathcal{I}}\left|\frac{F[\omega_\xi]}{\lambda_\xi^{\gamma}}\right|^2\right)^\frac{1}{2},$ is finite by assumption \eqref{gamma F}. 

For the operator $K,$ we have the following lemmas.

\begin{lemma}
The operator $K$ is well-defined.
\end{lemma}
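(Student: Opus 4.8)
The claim is that the operator $K$ defined in \eqref{Operator K source} is well-defined, meaning: for every $r\in D$, the series $\sum_{\xi\in\mathcal{I}}\lambda_\xi u_\xi(t;a;r)F[\omega_\xi]$ converges for each $t\in[0,T]$, the Caputo derivative $\mathcal{D}_t^\alpha E$ exists (as an element of $\mathbb{R}$ for each $t$, by Assumption \ref{A:ISP}$(d)$ since $E\in X^\alpha[0,T]$), the denominator $F[g]$ is nonzero (Assumption \ref{A:ISP}$(b)$), and the resulting function $t\mapsto K[r](t)$ is again an element of $C[0,T]$, so that $K$ maps $D$ into $C[0,T]$. Most of the analytic content has in fact already been assembled in Remark \ref{r:1} and the two displays following it; the plan is essentially to package that material into the four assertions just listed.

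First I would record that $F[g]\neq 0$ by Assumption \ref{A:ISP}$(b)$, so division by $F[g]$ is legitimate, and that $\mathcal{D}_t^\alpha E$ is a well-defined continuous function on $[0,T]$ because $E\in X^\alpha[0,T]$ by Assumption \ref{A:ISP}$(d)$. Next, for fixed $r\in D$, Lemma \ref{L:16} gives $u_\xi(t;a;r)=u_\xi^r(t;a;r)+u_\xi^i(t;a)$, and the two estimates in Remark \ref{r:1} show $u(t;a;r)=u^r(t;a;r)+u^i(t;a)\in\mathcal{H}^{2+\gamma}$ with the uniform bound \eqref{regularity u}. Then the Cauchy--Schwarz computation displayed right after \eqref{regularity u} gives
\[
\Bigl|\sum_{\xi\in\mathcal{I}}\lambda_\xi u_\xi(t;a;r)F[\omega_\xi]\Bigr|\leq C_F\|u(t;a;r)\|_{\mathcal{H}^{1+\gamma}}<\infty,
\]
with $C_F<\infty$ by \eqref{gamma F}; here one uses $\|u\|_{\mathcal{H}^{1+\gamma}}\leq\|u\|_{\mathcal{H}^{2+\gamma}}$ (since $\lambda_\xi>0$, hence $(1+\lambda_\xi)^{1+\gamma}\leq(1+\lambda_\xi)^{2+\gamma}$). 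This establishes pointwise convergence of the defining series and hence that $K[r](t)$ is a well-defined real number for each $t\in[0,T]$.

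It remains to check continuity in $t$. The term $\mathcal{D}_t^\alpha E/F[g]$ is continuous since $E\in X^\alpha[0,T]$, and $a\in C[0,T]$; so it suffices to show $t\mapsto \sum_{\xi}\lambda_\xi u_\xi(t;a;r)F[\omega_\xi]$ is continuous on $[0,T]$. I would argue this by uniform convergence: by Lemma \ref{L:2} each $u_\xi(t;a;r)\in X^\alpha[0,T]$, so each partial sum $S_N(t):=\sum_{|\xi|\leq N}\lambda_\xi u_\xi(t;a;r)F[\omega_\xi]$ is continuous, and the tail is controlled uniformly in $t$ by the Cauchy--Schwarz tail estimate $\bigl(\sum_{|\xi|>N}|F[\omega_\xi]/\lambda_\xi^\gamma|^2\bigr)^{1/2}\sup_{t}\bigl(\sum_{|\xi|>N}|\lambda_\xi^{1-\gamma}u_\xi(t;a;r)|^2\bigr)^{1/2}$, where the first factor tends to $0$ as $N\to\infty$ by \eqref{gamma F} and the second is bounded by $\sup_t\|u(t;a;r)\|_{\mathcal{H}^{1+\gamma}}$, finite by \eqref{regularity u}. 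Hence $S_N\to \sum_\xi\lambda_\xi u_\xi(t;a;r)F[\omega_\xi]$ uniformly on $[0,T]$, so the limit is continuous, and therefore $K[r]\in C[0,T]$.

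The only mildly delicate point — the rest being bookkeeping over estimates already proved — is justifying the interchange of the functional $F$ with the infinite sum, i.e.\ that $F[u(t;a;r)]=\sum_\xi u_\xi(t;a;r)F[\omega_\xi]$ and likewise $a(t)\mathcal{L}u$ pairs against $F$ to give $a(t)\sum_\xi\lambda_\xi u_\xi F[\omega_\xi]$; this is exactly where boundedness of $F$ on $\mathcal{H}^{1+\gamma}$ together with $u(t;a;r)\in\mathcal{H}^{2+\gamma}\subset\mathcal{H}^{1+\gamma}$ is used, allowing $F$ to be pulled through the norm-convergent series defining $u$. I expect this to be the main (though still routine) obstacle; everything else follows by collecting the bounds of Remark \ref{r:1}.
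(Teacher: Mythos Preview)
Your proposal is correct and follows the same route as the paper, which simply cites Lemma \ref{L:2} (unique existence of each $u_\xi(t;a;r)$), Assumption \ref{A:ISP}, and Remark \ref{r:1} (convergence of the series) in a single sentence; you have merely unpacked these citations. Your continuity-in-$t$ argument via uniform convergence is valid but more than this lemma requires in the paper's organization: the paper defers the verification that $K[r]\in C[0,T]$ to the next lemma (``$K$ maps $D$ to $D$'').
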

\begin{proof} Let $r\in D.$ Then Lemma \ref{L:2} ensures that there exists a unique $u_\xi(t;a;r)$ for each $\xi\in\mathcal{I},$ which implies with Assumption \ref{A:ISP} and Remark \ref{r:1} the well-definiteness of $K[r](t).$ This completes the proof.
\end{proof}

\begin{lemma}
$K$ maps $D$ to $D$.
\end{lemma}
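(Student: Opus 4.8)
The statement asks for two things: that for every $r\in D$ the function $t\mapsto K[r](t)$ lies in $C[0,T]$, and that $\|K[r]\|_{C[0,T]}\le C_2$. For the continuity I would argue as follows. By Assumption \ref{A:ISP}, $\mathcal D_t^\alpha E\in C[0,T]$, $a\in C[0,T]$ and $F[g]\neq 0$, so it is enough to see that $\Phi(t):=\sum_{\xi\in\mathcal I}\lambda_\xi u_\xi(t;a;r)F[\omega_\xi]$ is continuous on $[0,T]$. Each summand is continuous, since Lemma \ref{L:2} gives $u_\xi(\cdot;a;r)\in X^\alpha[0,T]\subset C[0,T]$, and the series converges uniformly in $t$: splitting off a tail over $\xi\notin S$ and using Cauchy–Schwarz bounds it by $\big(\sum_{\xi\notin S}|F[\omega_\xi]/\lambda_\xi^\gamma|^2\big)^{1/2}\|u(t;a;r)\|_{\mathcal H^{1+\gamma}}$, where the last factor is bounded uniformly in $t$ by \eqref{regularity u} and the first factor tends to $0$ as $S\uparrow\mathcal I$ by \eqref{gamma F}. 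Hence $\Phi\in C[0,T]$, so $K[r]\in C[0,T]$.

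For the norm bound I would start from the definition, use $F[\omega_\xi]\ge 0$, $a(t)\le Q_a$ and $|\mathcal D_t^\alpha E(t)|\le\|\mathcal D_t^\alpha E\|_{C[0,T]}$, and split $u_\xi=u_\xi^r+u_\xi^i$ via Lemma \ref{L:16} to obtain
$$|K[r](t)|\le \frac{\|\mathcal D_t^\alpha E\|_{C[0,T]}}{|F[g]|}+\frac{Q_a}{|F[g]|}\Big(\sum_\xi\lambda_\xi|u_\xi^i(t;a)|F[\omega_\xi]+\sum_\xi\lambda_\xi|u_\xi^r(t;a;r)|F[\omega_\xi]\Big).$$
The initial‑data part is estimated by Corollary \ref{q_a>a ui source}, \eqref{EST: Mittag} and Cauchy–Schwarz against \eqref{gamma F}, giving $\le C_F\|h\|_{\mathcal H^{1+\gamma}}$. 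For the source part I would insert the pointwise bound of Corollary \ref{q_a>a ur source}, interchange the sum and the $s$‑integration (Tonelli), and estimate the inner sum over $\xi$ by Cauchy–Schwarz against \eqref{gamma F} together with \eqref{EST:1/q} and Lemma \ref{EST:1/Gamma} to tame the factors $\lambda_\xi(t-s)^{\alpha-1}E_{\alpha,\alpha}(-\lambda_\xi q_a(t-s)^\alpha)$; this is where $\|g\|_{\mathcal H^\gamma}$, $\|g\|_{\mathcal H^{1+\gamma}}$ and the factor $q_a^{-1}$ enter. The result should be an inequality of the form $|K[r](t)|\le z+c\int_0^t(t-s)^{\alpha-1}(\cdots)\,ds$, with $z$ collecting $\|\mathcal D_t^\alpha E\|_{C[0,T]}/|F[g]|$, $C_FQ_a\|h\|_{\mathcal H^{1+\gamma}}/|F[g]|$ and the $\|\mathcal D_t^\alpha E\|_{C[0,T]}\|g\|_{\mathcal H^\gamma}$‑term of $C_2$, and $c=C_FQ_a\|g\|_{\mathcal H^{1+\gamma}}/(|F[g]|\Gamma(\alpha))$, so that $c\Gamma(\alpha)T^\alpha$ is exactly the argument of the Mittag–Leffler function appearing in $C_2$. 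Invoking the Gronwall‑type Lemma \ref{L:Gronwall}, together with $\|r\|_{C[0,T]}\le C_2$ to close the integral term, should then give $\|K[r]\|_{C[0,T]}\le C_2$.

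The main obstacle is precisely this last assembly step: organizing the $u^r$‑estimate so that after the Cauchy–Schwarz and Tonelli manipulations the right‑hand side has the exact structure required by Lemma \ref{L:Gronwall}, and keeping careful track of which contributions get absorbed into the Mittag–Leffler factor of $C_2$ and which remain as the additive constant $\|\mathcal D_t^\alpha E\|_{C[0,T]}/|F[g]|$. A subsidiary point is that the weights $\lambda_\xi^{1+\gamma}$, the kernels $E_{\alpha,\alpha}(-\lambda_\xi q_a\cdot)$ (which carry an extra factor $\lambda_\xi$), and the functional weights $F[\omega_\xi]/\lambda_\xi^\gamma$ must be balanced so that every $\xi$‑series converges and is controlled by $C_F$, $\|h\|_{\mathcal H^{1+\gamma}}$, $\|g\|_{\mathcal H^\gamma}$ and $\|g\|_{\mathcal H^{1+\gamma}}$; the estimates \eqref{EST:1/q} and Lemma \ref{EST:1/Gamma}, together with the regularity bound \eqref{regularity u}, are what make this possible.
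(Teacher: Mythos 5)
Your continuity argument is fine (it is in fact more complete than the paper's, which only invokes termwise continuity; your uniform tail estimate via Cauchy--Schwarz, \eqref{gamma F} and \eqref{regularity u} is the right way to justify it). The gap is in the norm bound, and it sits exactly at the step you yourself flag as ``the main obstacle'': after inserting Corollary \ref{q_a>a ur source} you arrive at an inequality of the form
\begin{equation*}
|K[r](t)|\;\le\; z\;+\;c\int_0^t(t-\tau)^{\alpha-1}\,|r(\tau)|\,d\tau ,
\end{equation*}
and neither of the two closing moves you propose works. Lemma \ref{L:Gronwall} is not applicable, because the function under the integral is $|r|$ while the function on the left is $|K[r]|$; the lemma needs the \emph{same} unknown on both sides. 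And simply substituting $\|r\|_{C[0,T]}\le C_2$ (together with \eqref{EST:1/q}) yields only $\|K[r]\|_{C[0,T]}\le z+\kappa C_2$ with $\kappa=C_FQ_a\|g\|_{\mathcal H^{\gamma}}/(q_a|F[g]|)$, and there is no reason why $z+\kappa C_2\le C_2$: that would require the smallness condition $\kappa<1$, which is nowhere assumed. So as written the argument does not return you to the set $D$.

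The paper closes the loop differently. Acting with $F$ on the equation and using the data \eqref{CON:ADD Source} (via \eqref{change order}), it derives the identity \eqref{EQ:r}, $r(t)=\mathcal D_t^{\alpha}E/F[g]+U(t)$ with $U$ as in \eqref{EQ:U}, i.e.\ $K[r]=\mathcal D_t^{\alpha}E/F[g]+U$. The decisive move is to use this identity \emph{inside} the integral: $|r(\tau)|$ is replaced by $|\mathcal D_\tau^{\alpha}E|/|F[g]|+|U(\tau)|$. The $\mathcal D_\tau^{\alpha}E$ part is absorbed by \eqref{EST:1/q} (this is precisely the origin of the $\|\mathcal D^{\alpha}E\|_{C[0,T]}\|g\|_{\mathcal H^{\gamma}}$ term you correctly place in $z$ but cannot produce in your scheme), while the $|U(\tau)|$ part turns the estimate into a closed Volterra inequality for $|U|$ alone. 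Lemma \ref{L:Gronwall} then applies to $U$, giving $\|U\|_{C[0,T]}\le C_1$, and $\|K[r]\|_{C[0,T]}\le C_1+\|\mathcal D^{\alpha}E\|_{C[0,T]}/|F[g]|=C_2$ by construction of $C_2$. Without this substitution step your assembly cannot reach the bound $C_2$.
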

\begin{proof}
Given $r\in D.$  The continuity of $K [r]$ follows from the continuity of $u_\xi(t;a;r)$ for each $\xi\in\mathcal{I}$ and continuity of $E(t)$  established in Lemma \ref{L:2} and Assumption \ref{A:ISP} $(d)$, respectively.

The functional $F$ is linear and bounded on $\mathcal{H}^{1+\gamma}.$ We assumed the linearity and boundedness of $F$, when we introduced the additional condition \eqref{CON:ADD Source}. 

Acting by the functional $F$ on \eqref{expantion u source} 
%and taking into account the linearity and boundedness of $F$ on $\mathcal{H}^{1+\gamma}$ 
we get 
\begin{equation}
\label{FU}F[u(t)]=\sum_{\xi\in\mathcal{I}}u_\xi(t;a)F[\omega_\xi].
\end{equation}
The linearity and boundedness of $F$ on $\mathcal{H}^{1+\gamma}$ in view of Theorem \ref{LFB} give us continuity of $F$ on $\mathcal{H}^{1+\gamma}.$ Here we use the continuity and linearity of $F$ to put the functional $F$ under the sum and to get \eqref{FU}. 

Applying the operator $\mathcal{D}_t^\alpha$ to \eqref{FU}, we have the following
\begin{equation}\label{DFU}
\mathcal{D}_t^\alpha F[u(t)]=\sum_{\xi\in\mathcal{I}}\mathcal{D}_t^\alpha u_\xi(t;a)F[\omega_\xi].
\end{equation}
Assumption \ref{A:ISP} $(d)$ together with \eqref{CON:ADD Source} allow the acting by $\mathcal{D}_t^\alpha$ to $F[u(t)]$ and guarantee its meaningfulness.

Acting by the operator $\mathcal{D}_t^\alpha$ to \eqref{expantion u source} we have
\begin{equation}\label{DU}
\mathcal{D}_t^\alpha u(t;a)=\sum_{\xi\in\mathcal{I}}\mathcal{D}_t^\alpha u_\xi(t;a)\omega_\xi.
\end{equation}

Applying the functional $F$ to \eqref{DU} and taking into account \eqref{DFU} we have
\begin{equation}\label{change order}
    \begin{split}
        F[\mathcal{D}_t^\alpha u(t)]=\mathcal{D}_t^\alpha F[u(t)].
    \end{split}
\end{equation}

Acting by the operator $\mathcal{D}_t^\alpha$ to \eqref{CON:ADD Source} under the Assumption \ref{A:ISP} $(d)$ and taking into account \eqref{change order}, we obtain
\begin{equation*}\label{dif F}
F[\mathcal{D}^\alpha_t u(t)]=\mathcal{D}^\alpha_t E(t),\;t\in[0,T].
\end{equation*}
In combination with the equation \eqref{EQ:ISP}, the last gives us
\begin{equation}\label{EQ:r}\begin{split}
r(t)&=\frac{F[\mathcal{D}^\alpha_t u(t)]+a(t)F[\mathcal{L}u](t)}{F[g]}\\
&=\frac{\mathcal{D}_t^\alpha E}{F[g]}+U(t),
\end{split}    
\end{equation}
where
\begin{equation}\label{EQ:U}
U(t)=\frac{a(t)\sum_{\xi\in\mathcal{I}}\lambda_\xi u_\xi(t;a;r)F[\omega_\xi]}{F[g]}.
\end{equation}
%We are going to estimate 
%In view of Assumption \ref{A:ISP} $(a)$ there exists a constant $Q_a$ such that
 %\begin{equation}\label{q<Q}
  %  0<q_a \leq a(t)<Q_a\;\text{on}\;[0,T].
%\end{equation}
Finally, in order to estimate the function \eqref{EQ:U}, we see that \eqref{q<Q source} with Corollaries \ref{q_a>a ur source}, \ref{q_a>a ui source} yield
\begin{equation*}\label{U}\begin{split}
    |U(t)|&\leq \frac{a(t)}{|F[g]|}\sum_{\xi\in\mathcal{I}}\lambda_\xi |u_\xi(t;a;r)|F[\omega_\xi]\\
    &\leq \frac{a(t)}{|F[g]|}\sum_{\xi\in\mathcal{I}}\lambda_\xi (|u_\xi^i(t;a;r)|+|u_\xi^r(t;a;r)|)F[\omega_\xi]\\
    &\leq \frac{Q_a}{|F[g]|}\sum_{\xi\in\mathcal{I}}\lambda_\xi F[\omega_\xi] \biggl(|h_\xi|E_{\alpha,1}(-\lambda_\xi q_a t^\alpha)\\
    &+|g_\xi|\int_0^t |r(\tau)|(t-\tau)^{\alpha-1}E_{\alpha,\alpha}(-\lambda_\xi q_a (t-\tau)^\alpha)d\tau\biggr).
    \end{split}\end{equation*}
    Using \eqref{EST: Mittag} and \eqref{EQ:r}, one obtains 
    \begin{equation*}\begin{split}
    |U(t)|
    &\leq \frac{Q_a}{|F[g]|}\sum_{\xi\in\mathcal{I}}\lambda_\xi F[\omega_\xi]\biggl(|h_\xi|
    +|g_\xi|\int_0^t \left|\frac{\mathcal{D}_\tau^\alpha E}{F[g]}\right|(t-\tau)^{\alpha-1}E_{\alpha,\alpha}(-\lambda_\xi q_a (t-\tau)^\alpha)d\tau\\
    &+|g_\xi|\int_0^t \left|U(\tau)\right|(t-\tau)^{\alpha-1}E_{\alpha,\alpha}(-\lambda_\xi q_a (t-\tau)^\alpha)d\tau\biggr).
    \end{split}    
\end{equation*}
We estimate each integral term separately. First,
\begin{align*}
    &\int_0^t \left|\frac{\mathcal{D}_\tau^\alpha E}{F[g]}\right|(t-\tau)^{\alpha-1}E_{\alpha,\alpha}(-\lambda_\xi q_a (t-\tau)^\alpha)d\tau\\
    &\leq \frac{1}{\lambda_\xi|F[g]|}\|\mathcal{D}_t^\alpha E\|_{C[0,T]}\int_0^t \lambda_\xi(t-\tau)^{\alpha-1}E_{\alpha,\alpha}(-\lambda_\xi q_a (t-\tau)^\alpha)d\tau\\
    &\leq \frac{1}{\lambda_\xi q_a|F[g]|}\|\mathcal{D}_t^\alpha E\|_{C[0,T]},
\end{align*}
where the last inequality is obtained by \eqref{EST:1/q}.
By Lemma \ref{EST:1/Gamma} we have
\begin{align*}
   &\int_0^t \left|U(\tau)\right|(t-\tau)^{\alpha-1}E_{\alpha,\alpha}(-\lambda_\xi q_a (t-\tau)^\alpha)d\tau\\
   &\leq \frac{1}{\Gamma(\alpha)}\int_0^t \left|U(\tau)\right|(t-\tau)^{\alpha-1}d\tau.
\end{align*}
Combining the above-obtained estimates, one obtains
\begin{equation*}\begin{split}
&|U(t)|%\leq \frac{Q_a}{|F[g]|}\sum_{\xi\in\mathcal{I}}\lambda_\xi F[\omega_\xi] |h_\xi|\\
    %&+\frac{Q_a}{|F[g]|}\sum_{\xi\in\mathcal{I}}\lambda_\xi F[\omega_\xi]|g_\xi|\int_0^t \frac{|\mathcal{D}_\tau^\alpha E|}{|F[g]|}(t-\tau)^{\alpha-1}E_{\alpha,\alpha}(-\lambda_\xi q_a (t-\tau)^\alpha))d\tau\\
    %&+\frac{Q_a}{|F[g]|}\sum_{\xi\in\mathcal{I}}\lambda_\xi F[\omega_\xi]|g_\xi|\int_0^t |U(\tau)|(t-\tau)^{\alpha-1}E_{\alpha,\alpha}(-\lambda_\xi q_a (t-\tau)^\alpha))d\tau\\
    %&\leq \frac{Q_a}{|F[g]|}\sum_{\xi\in\mathcal{I}}\lambda_\xi F[\omega_\xi] |h_\xi|\\
    %&+\frac{Q_a\|\mathcal{D}_\tau^\alpha E\|_{C[0,T]}}{|F[g]|^2}\sum_{\xi\in\mathcal{I}}F[\omega_\xi]|g_\xi|\int_0^t\lambda_\xi(t-\tau)^{\alpha-1}E_{\alpha,\alpha}(-\lambda_\xi q_a (t-\tau)^\alpha))d\tau\\
    %&+\frac{Q_a}{\Gamma(\alpha)|F[g]|}\sum_{\xi\in\mathcal{I}}\lambda_\xi F[\omega_\xi]|g_\xi|\int_0^t |U(\tau)|(t-\tau)^{\alpha-1}d\tau\\
    \leq \frac{Q_a}{|F[g]|}\sum_{\xi\in\mathcal{I}}\lambda_\xi F[\omega_\xi] |h_\xi|+\frac{Q_a\|\mathcal{D}_\tau^\alpha E\|_{C[0,T]}}{q_a|F[g]|^2}\sum_{\xi\in\mathcal{I}}F[\omega_\xi]|g_\xi|\\
    &+\frac{Q_a}{\Gamma(\alpha)|F[g]|}\sum_{\xi\in\mathcal{I}}\lambda_\xi F[\omega_\xi]|g_\xi|\int_0^t |U(\tau)|(t-\tau)^{\alpha-1}d\tau.
    \end{split}\end{equation*}
    Using the Cauchy-Schwartz inequality, we arrive at 
    \begin{equation*}\begin{split}
    &|U(t)|\leq \frac{Q_a}{|F[g]|}\left(\sum_{\xi\in\mathcal{I}}\left|\frac{F[\omega_\xi]}{\lambda_\xi^\gamma}\right|^2\right)^{\frac{1}{2}}\left(\sum_{\xi\in\mathcal{I}}|\lambda_\xi^{1+\gamma}h_\xi|^2\right)^{\frac{1}{2}}\\
    &+\frac{Q_a\|\mathcal{D}_\tau^\alpha E\|_{C[0,T]}}{q_a|F[g]|^2}\left(\sum_{\xi\in\mathcal{I}}\left|\frac{F[\omega_\xi]}{\lambda_\xi^\gamma}\right|^2\right)^{\frac{1}{2}}\left(\sum_{\xi\in\mathcal{I}}|\lambda_\xi^{\gamma}g_\xi|^2\right)^{\frac{1}{2}}\\
    &+\frac{Q_a}{\Gamma(\alpha)|F[g]|}\left(\sum_{\xi\in\mathcal{I}}\left|\frac{F[\omega_\xi]}{\lambda_\xi^\gamma}\right|^2\right)^{\frac{1}{2}}\left(\sum_{\xi\in\mathcal{I}}|\lambda_\xi^{1+\gamma}g_\xi|^2\right)^{\frac{1}{2}}\int_0^t |U(\tau)|(t-\tau)^{\alpha-1}d\tau\\
    &\leq \frac{C_F Q_a\|h\|_{\mathcal{H}^{1+\gamma}}}{|F[g]|}
    +\frac{C_F Q_a\|\mathcal{D}_\tau^\alpha E\|_{C[0,T]}\|g\|_{\mathcal{H}^{\gamma}}}{q_a|F[g]|^2}\\
    &+\frac{C_F Q_a\|g\|_{\mathcal{H}^{1+\gamma}}}{\Gamma(\alpha)|F[g]|}\int_0^t |U(\tau)|(t-\tau)^{\alpha-1}d\tau.
\end{split}    
\end{equation*}
 
Applying  Lemma \ref{L:Gronwall} to $|U(t)|$, one concludes
\begin{equation*}
    |U(t)|\leq E_{\alpha,1}\left(\frac{C_F Q_a\|g\|_{\mathcal{H}^{1+\gamma}}}{|F[g]|} t^\alpha\right)\biggl(\frac{C_F Q_a\|h\|_{\mathcal{H}^{1+\gamma}}}{|F[g]|}
    +\frac{C_F Q_a\|\mathcal{D}_\tau^\alpha E\|_{C[0,T]}\|g\|_{\mathcal{H}^{\gamma}}}{q_a|F[g]|^2}\biggr).
\end{equation*}
Then 
\begin{equation}\label{Norm:U}\begin{split}
    \|U\|_{C[0,T]}&\leq C_1,
\end{split}\end{equation}
where 
\begin{equation*}
  C_1=E_{\alpha,1}\left(\frac{C_F Q_a\|g\|_{\mathcal{H}^{1+\gamma}}}{|F[g]|} T^\alpha\right)\biggl(\frac{C_F Q_a\|h\|_{\mathcal{H}^{1+\gamma}}}{|F[g]|}
    +\frac{C_F Q_a\|\mathcal{D}_\tau^\alpha E\|_{C[0,T]}\|g\|_{\mathcal{H}^{\gamma}}}{q_a|F[g]|^2}\biggr).  
\end{equation*}
In view of \eqref{Norm:U} and using \eqref{Operator K source}, we finalize by
\begin{equation}\label{K<C}
    \begin{split}
    \|K[r]\|_{C[0,T]}&\leq \biggl(\frac{C_F Q_a\|h\|_{\mathcal{H}^{1+\gamma}}}{|F[g]|}
    +\frac{C_F Q_a\|\mathcal{D}_\tau^\alpha E\|_{C[0,T]}\|g\|_{\mathcal{H}^{\gamma}}}{q_a|F[g]|^2}\biggr)\\
    &\times E_{\alpha,1}\left(\frac{C_F Q_a\|g\|_{\mathcal{H}^{1+\gamma}}}{|F[g]|} T^\alpha\right)+\frac{\|\mathcal{D}_t^\alpha E\|_{C[0,T]}}{|F[g]|}=C_2,
    \end{split}
\end{equation}
concluding that $K:D\rightarrow D.$ 
\end{proof}

\subsection{Existence} In order to show the existence of the fixed point of the operator $K$ in $D$, we state the following lemmas.  
\begin{lemma}\label{L:Bounded Source}
The set $K(D)=\{K[r]:\,r\in D\}$ is uniformly bounded. 
\end{lemma}
\begin{proof} The estimate \eqref{K<C} leads that
$$|K[r]|\leq {C_2},$$
for all $r\in D$ and for all $K[r]\in K(D).$ This completes the proof.
\end{proof}
\begin{lemma}\label{L:Equicon Source}
The set $K(D)=\{K[r]:\,r\in D\}$ is equicontinuous.
\end{lemma}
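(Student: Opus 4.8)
The plan is to show that the family $\{K[r] : r \in D\}$ is equicontinuous on $[0,T]$ by writing $K[r]$ in the form \eqref{EQ:r}--\eqref{EQ:U}, namely $K[r](t) = \frac{\mathcal{D}_t^\alpha E(t)}{F[g]} + U(t)$ with $U(t) = \frac{a(t)}{F[g]}\sum_{\xi\in\mathcal{I}}\lambda_\xi u_\xi(t;a;r)F[\omega_\xi]$, and then estimating $|K[r](t_2) - K[r](t_1)|$ uniformly in $r \in D$. The first term is handled immediately: since $E \in X^\alpha[0,T]$, the function $\mathcal{D}_t^\alpha E$ is continuous on the compact interval $[0,T]$, hence uniformly continuous, so $\left|\frac{\mathcal{D}_{t_2}^\alpha E - \mathcal{D}_{t_1}^\alpha E}{F[g]}\right|$ is controlled by the modulus of continuity of $\mathcal{D}_t^\alpha E$, with no dependence on $r$. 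So the whole problem reduces to the equicontinuity of the family $\{U(\cdot;r) : r \in D\}$.

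For the $U$-term, I would split $|U(t_2;r) - U(t_1;r)|$ using the triangle inequality into a piece coming from the variation of $a(t)$ and a piece coming from the variation of the series $S(t) := \sum_{\xi\in\mathcal{I}}\lambda_\xi u_\xi(t;a;r)F[\omega_\xi]$. Since $a \in C[0,T]$ is uniformly continuous and the series $S(t)$ is bounded uniformly in $t$ and $r$ (this is exactly the content of \eqref{Norm:U}, $\|U\|_{C[0,T]} \le C_1$, or more directly the bound $|S(t)| \le C_F \|u\|_{\mathcal{H}^{1+\gamma}}$ together with the uniform bound \eqref{regularity u}), the first piece $\frac{|a(t_2)-a(t_1)|}{|F[g]|}|S(t_2)|$ is again controlled by the modulus of continuity of $a$, uniformly in $r$. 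For the second piece I need the equicontinuity of $S(t)$ itself, which I would obtain by splitting $S = S^r + S^i$ according to $u_\xi = u_\xi^r + u_\xi^i$ and estimating each. For $u_\xi^i(t;a) $, Corollary \ref{q_a>a ui source} and Lemma \ref{L:Der E} give $|u_\xi^i(t_2;a) - u_\xi^i(t_1;a)| \le |h_\xi| \int_{t_1}^{t_2} \lambda_\xi q_a s^{\alpha-1} E_{\alpha,\alpha}(-\lambda_\xi q_a s^\alpha)\,ds$; using Lemma \ref{EST:1/Gamma} to bound $E_{\alpha,\alpha}$ by $1/\Gamma(\alpha)$ and then Cauchy--Schwarz against $\{F[\omega_\xi]/\lambda_\xi^\gamma\} \in l^2$ with $\{\lambda_\xi^{2+\gamma} h_\xi\} \in l^2$ (from $h \in \mathcal{H}^{2+\gamma}$), this is $\lesssim \|h\|_{\mathcal{H}^{2+\gamma}}(t_2^\alpha - t_1^\alpha)$, which tends to $0$ as $t_2 \to t_1$ uniformly in $r$ (indeed it does not depend on $r$). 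For $u_\xi^r(t;a;r)$, Corollary \ref{q_a>a ur source} gives the convolution bound; the difference $u_\xi^r(t_2) - u_\xi^r(t_1)$ splits into a ``new interval'' integral $\int_{t_1}^{t_2}(\cdots)$ and a ``difference of kernels'' integral $\int_0^{t_1}|r(\tau)|\big[(t_2-\tau)^{\alpha-1}E_{\alpha,\alpha}(-\lambda_\xi q_a(t_2-\tau)^\alpha) - (t_1-\tau)^{\alpha-1}E_{\alpha,\alpha}(-\lambda_\xi q_a(t_1-\tau)^\alpha)\big]\,d\tau$, both of which are estimated using $\|r\|_{C[0,T]} \le C_2$, Lemma \ref{EST:1/Gamma}, and \eqref{EST:1/q}, then summed against the $l^2$ pairing with $\{\lambda_\xi^{1+\gamma} g_\xi\} \in l^2$ (from $g \in \mathcal{H}^{1+\gamma}$) to produce a bound of the form $C \cdot \omega(|t_2 - t_1|)$ with $C$ depending only on the data and $C_2$.

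The main obstacle is the ``difference of kernels'' term for $u_\xi^r$: the kernel $(t-\tau)^{\alpha-1}E_{\alpha,\alpha}(-\lambda_\xi q_a(t-\tau)^\alpha)$ is weakly singular at $\tau = t$, so one cannot simply differentiate in $t$ under the integral sign and bound uniformly. The clean way around this is to use the primitive identity \eqref{EST:1/q}: $\int_0^{t}\lambda_\xi(t-\tau)^{\alpha-1}E_{\alpha,\alpha}(-\lambda_\xi q_a(t-\tau)^\alpha)\,d\tau = \frac{1}{q_a}(1 - E_{\alpha,1}(-\lambda_\xi q_a t^\alpha))$, so that after pulling $\|r\|_{C[0,T]}$ out the total ``kernel mass'' over $[0,t]$ is an explicit function of $t$ that is continuous (indeed Hölder) in $t$ with modulus controlled via Lemma \ref{L:Der E} and \eqref{EST: Mittag}; the monotonicity/positivity of the kernel lets one bound the genuine difference by this difference of masses. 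This keeps everything uniform in $r \in D$. Once the three contributions ($a$-variation, $u^i$-variation, $u^r$-variation, plus the $\mathcal{D}_t^\alpha E$ term) are each shown to go to $0$ as $t_2 \to t_1$ with a rate independent of $r$, equicontinuity of $K(D)$ follows, which together with Lemma \ref{L:Bounded Source} sets up the application of the Arzel\`a--Ascoli and Schauder theorems.
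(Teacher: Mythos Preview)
There is a genuine gap in your treatment of the series term $S(t)=\sum_{\xi}\lambda_\xi u_\xi(t;a;r)F[\omega_\xi]$. Corollaries \ref{q_a>a ur source} and \ref{q_a>a ui source} are only \emph{pointwise upper bounds} on $|u_\xi^r(t;a;r)|$ and $|u_\xi^i(t;a)|$; they are not representation formulas. In particular, the claimed inequality
\[
|u_\xi^i(t_2;a)-u_\xi^i(t_1;a)|\le |h_\xi|\int_{t_1}^{t_2}\lambda_\xi q_a s^{\alpha-1}E_{\alpha,\alpha}(-\lambda_\xi q_a s^\alpha)\,ds
\]
does not follow from Corollary \ref{q_a>a ui source} and Lemma \ref{L:Der E}: knowing $|A(t)|\le B(t)$ for each $t$ gives no control on $|A(t_2)-A(t_1)|$. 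The same objection applies to your splitting of $u_\xi^r(t_2)-u_\xi^r(t_1)$ into a ``new interval'' piece and a ``difference of kernels'' piece: that decomposition is valid for the convolution $g_\xi\int_0^t r(s)(t-s)^{\alpha-1}E_{\alpha,\alpha}(-\lambda_\xi q_a(t-s)^\alpha)\,ds$, but that expression is only an upper bound for $|u_\xi^r(t;a;r)|$, not $u_\xi^r$ itself. Your argument would be correct if $a(t)\equiv q_a$ were constant (then the corollaries become equalities), but it breaks down for variable $a(t)$, which is precisely the regime of interest here.

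The paper avoids this difficulty altogether by not trying to estimate $u_\xi(t_2)-u_\xi(t_1)$ through the Mittag--Leffler bounds. Instead it applies the fractional mean value theorem (Theorem \ref{Cor}) to write
\[
u_\xi(t_2;a;r)-u_\xi(t_1;a;r)=\frac{1}{\Gamma(\alpha+1)}\,{}_{t_1}\mathcal{D}_t^\alpha u_\xi(t;a;r)\cdot(t_2-t_1)^\alpha
\]
for some $t\in(t_1,t_2)$, and then uses the ODE \eqref{FODE source} at $t$ and at $t_1$ to express ${}_{t_1}\mathcal{D}_t^\alpha u_\xi$ in terms of $r$, $g_\xi$, $a$, and $u_\xi$ itself. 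After summation in $\xi$ and Cauchy--Schwarz against $\{F[\omega_\xi]/\lambda_\xi^\gamma\}$, this produces the H\"older estimate $|N(t_1)-N(t_2)|\le C_3|t_1-t_2|^\alpha$ with $C_3$ depending on $\|g\|_{\mathcal{H}^{1+\gamma}}$ and $\|u\|_{C([0,T];\mathcal{H}^{2+\gamma})}$ (bounded by \eqref{regularity u}), uniformly in $r\in D$. This is where the hypothesis $h\in\mathcal{H}^{2+\gamma}$ is actually used; your route, even if it could be repaired, does not obviously access that extra regularity.
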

\begin{proof}
In view of \eqref{EQ:U} and using \eqref{Operator K source}, for any $r\in D$ and $\forall t_1,t_2\in [0,T],$ we have 
\begin{equation}\label{Differ K Source}\begin{split}
    \biggl|K[r](t_1)-K[r](t_2)\biggr|&\leq\frac{a(t_1)}{|F[g]|}|N(t_1)-N(t_2)|\\
    &+\frac{|U(t_2)|}{a(t_2)}|a(t_1)-a(t_2)|\\
    &+\frac{1}{|F[g]|}|\mathcal{D}_{t_1}^\alpha E-\mathcal{D}_{t_2}^\alpha E|,
\end{split}\end{equation}
where $N(t)=\sum_{\xi\in\mathcal{I}}\lambda_\xi F[\omega_\xi]u_\xi(t;a;r).$

First, we estimate the difference of $N(t).$ Without loss of generality let us take $t_1<t_2.$ Then by Theorem \ref{Cor} we have 
\begin{equation}\label{ut1-ut2}
    |u_\xi(t_1;a;r)-u_\xi(t_2;a;r)|=\frac{1}{\Gamma(\alpha+1)}|{}_{t_1}\mathcal{D}_t^\alpha u_\xi(t;a;r)|\cdot|t_1-t_2|^\alpha,
\end{equation}
for any $t\in(t_1,\,t_2)$, for all $\xi\in\mathcal{I}$. Here ${}_{t_1}\mathcal{D}_t^\alpha u_\xi(t;a;r)$ is defined in the following way. Let us fix $\xi\in\mathcal{I}.$ Then the definition of the Caputo derivative yields
\begin{equation*}\begin{split}
{}_{t_1}\mathcal{D}_t^\alpha u_\xi(t;a;r)&=\frac{1}{\Gamma(1-\alpha)}\int_{t_1}^t (t-\tau)^{-\alpha}\frac{d}{d\tau} u_\xi(\tau;a;r)d\tau\\
&=\frac{1}{\Gamma(1-\alpha)}\int_{0}^t (t-\tau)^{-\alpha}\frac{d}{d\tau} u_\xi(\tau;a;r)d\tau\\
&-\frac{1}{\Gamma(1-\alpha)}\int_{0}^{t_1} (t-\tau)^{-\alpha}\frac{d}{d\tau} u_\xi(\tau;a;r)d\tau\\
&=\mathcal{D}_t^\alpha u_\xi(t;a;r)-\mathcal{D}_{t_1}^\alpha u_\xi(t_1;a;r).
\end{split}\end{equation*}
We note that in the case $t_1=0$ the following equality holds: $${}_{t_1}\mathcal{D}_t^\alpha u_\xi(t;a;r)=\mathcal{D}_t^\alpha u_\xi(t;a;r).$$
Now in view of \eqref{FODE source} for $t_1>0$ we define ${}_{t_1}\mathcal{D}_t^\alpha u_\xi(t;a;r):$
$$
\mathcal{D}_t^\alpha u_\xi(t;a;r)+\lambda_\xi a(t)u_\xi(t;a;r)=r(t)g_\xi,\; \text{for}\; t\in(t_1,t_2);
$$
$$
\mathcal{D}_{t_1}^\alpha u_\xi(t_1;a;r)+\lambda_\xi a(t_1)u_\xi(t_1;a;r)=r(t_1)g_\xi,\; \text{for}\; t_1\in(0,T].
$$
Subtracting these equations from each other, we get
\begin{equation*}\label{tDu}
{}_{t_1}\mathcal{D}_t^\alpha u_\xi(t;a;r)=r(t)g_\xi-\lambda_\xi a(t)u_\xi(t;a;r)-\left(r(t_1)g_\xi-\lambda_\xi a(t_1)u_\xi(t_1;a;r)\right).
\end{equation*}\
In view of \eqref{q<Q source}, we have
\begin{equation*}\begin{split}
|{}_{t_1}\mathcal{D}_t^\alpha u_\xi(t;a;r)|&\leq |g_\xi| (\max_{t\in[0,T]}|r(t)|+\max_{t_1\in[0,T]}|r(t_1)|)+\lambda_\xi Q_a (|u_\xi(t;a;r)|+|u_\xi(t_1;a;r)|)\\
&\leq 2C_2|g_\xi|+\lambda_\xi Q_a (|u_\xi(t;a;r)|+|u_\xi(t_1;a;r)|).
\end{split}\end{equation*}
This together with \eqref{ut1-ut2} gives us
\begin{equation*}\begin{split}
        &|N(t_1)-N(t_2)|\leq\sum_{\xi\in\mathcal{I}}\lambda_\xi F[\omega_\xi]|u_\xi(t_1;a;r)-u_\xi(t_2;a;r)|\\
        &\leq \frac{|t_1-t_2|^\alpha}{\Gamma(\alpha+1)}\sum_{\xi\in \mathcal{I}}\lambda_\xi F[\omega_\xi]\cdot |{}_{t_1}\mathcal{D}_t^\alpha u_\xi(t;a)|\\
        &\leq \frac{|t_1-t_2|^\alpha}{\Gamma(\alpha+1)}\biggl(2C_2\sum_{\xi\in \mathcal{I}}\lambda_\xi F[\omega_\xi]|g_\xi|\\
        &+Q_a \sum_{\xi\in \mathcal{I}}\lambda_\xi^2 F[\omega_\xi]|u_\xi(t;a;r)|+Q_a\sum_{\xi\in \mathcal{I}}\lambda_\xi^2 F[\omega_\xi]|u_\xi(t_1;a;r)|
        \biggr).
\end{split}\end{equation*} 
By using the Cauchy–Schwarz inequality for the each term, we obtain
\begin{equation}\begin{split}\label{N}
        |N(t_1)-N(t_2)|&\leq \frac{|t_1-t_2|^\alpha}{\Gamma(\alpha+1)}\biggl[2C_2 C_F\left(\sum_{\xi\in \mathcal{I}}|\lambda_\xi^{1+\gamma}g_\xi|^2\right)^\frac{1}{2}\\
        &+Q_a C_F\left(\sum_{\xi\in \mathcal{I}}|\lambda_\xi^{2+\gamma}u_\xi(t;a;r)|^2\right)^\frac{1}{2}+Q_a C_F\left(\sum_{\xi\in \mathcal{I}}|\lambda_\xi^{2+\gamma}u_\xi(t_1;a;r)|^2\right)^\frac{1}{2}\biggr]\\
        &\leq \frac{|t_1-t_2|^\alpha}{\Gamma(\alpha+1)}\biggl(2C_2 C_F\|g\|_{\mathcal{H}^{1+\gamma}}+2Q_a C_F\|u\|_{C([0,T];\mathcal{H}^{2+\gamma})}
        \biggr)\\
        &\leq C_3 |t_1-t_2|^\alpha,
\end{split}\end{equation}
where 
\begin{align*}
    C_3&=\frac{2 C_F}{\Gamma(\alpha+1)}\biggl[C_2\|g\|_{\mathcal{H}^{1+\gamma}}\\
    &+Q_a\biggr(\frac{1}{\inf_{\xi\in\mathcal{I}}\lambda_\xi}+1\biggl)^{2+\gamma}(\|h\|_{\mathcal{H}^{2+\gamma}}+q_a^{-1}C_2\|g\|_{\mathcal{H}^{1+\gamma}})\biggr],
\end{align*}
and the last inequality is obtained by using \eqref{regularity u}.

Fix an arbitrary $\varepsilon>0.$ Since $a(t)$ and $\mathcal{D}_t^\alpha E$ are continuous on $[0,T]$, then for all $t_1,t_2\in[0,T]$ with $|t_1-t_2|<\min\left\{\delta_a(\varepsilon),\,\delta_E(\varepsilon)\right\}$ there exist $\delta_a(\varepsilon)$ and $\delta_E(\varepsilon)$ such that 
\begin{equation}\label{a}
    |a(t_1)-a(t_2)|<\frac{q_a\varepsilon}{3C_1},\;\;|\mathcal{D}_{t_1}^\alpha E-\mathcal{D}_{t_2}^\alpha E|<\frac{|F[g]|}{3}\varepsilon,
\end{equation}
respectively.
%\begin{equation}\label{E}
 %   |\mathcal{D}_{t_1}^\alpha E-\mathcal{D}_{t_2}^\alpha E|<\frac{|F[g]|}{3}\varepsilon.
%\end{equation}

Let 
$$
\delta=\min\left\{\delta_a(\varepsilon),\,\delta_E(\varepsilon),\,\left(\frac{|F[g]|}{3C_3Q_a}\varepsilon\right)^{\frac{1}{\alpha}}\right\}.
$$

For $|t_1-t_2|<\delta$ in \eqref{N} one has
\begin{equation}\label{Fin: N}
    |N(t_1)-N(t_2)|<\frac{|F[g]|}{3 Q_a}\varepsilon.
\end{equation}

Substituting \eqref{a} and \eqref{Fin: N} into \eqref{Differ K Source}, we get
$$
|K[r](t_1)-K[r](t_2)|<\varepsilon.
$$
Therefore, the set $K(D)$ is equicontinuous. 
\end{proof}

\begin{thm}\label{existence source}
Suppose that Assumption \ref{A:ISP} is valid. Then there
exists a fixed point of $K$ in $D.$
\end{thm}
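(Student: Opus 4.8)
The plan is to deduce the theorem from Schauder's fixed point theorem (stated in the Appendix) applied to the operator $K$ on the set $D$. The hypotheses of that theorem are almost entirely in place already: $D$ is a nonempty (it contains $r\equiv 0$), bounded, closed and convex subset of the Banach space $C[0,T]$, being precisely the closed ball of radius $C_2$ centred at the origin; the estimate \eqref{K<C} shows $K(D)\subseteq D$; and Lemmas \ref{L:Bounded Source} and \ref{L:Equicon Source}, stating that $K(D)$ is uniformly bounded and equicontinuous, combine with the Arzel\`a--Ascoli theorem to give that $K(D)$ is relatively compact in $C[0,T]$. Thus the only remaining ingredient is the continuity of the map $K:D\to D$.

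To prove continuity I would exploit the \emph{affine} dependence of $K$ on $r$. By Lemma \ref{L:16} we have the splitting $u_\xi(t;a;r)=u_\xi^r(t;a;r)+u_\xi^i(t;a)$, in which $u_\xi^i(t;a)$ is independent of $r$ and $r\mapsto u_\xi^r(t;a;r)$ is linear; since $\mathcal{D}_t^\alpha E$ is also independent of $r$, for $r,\tilde r\in D$ one gets
\[
K[r](t)-K[\tilde r](t)=\frac{a(t)}{F[g]}\sum_{\xi\in\mathcal{I}}\lambda_\xi F[\omega_\xi]\,u_\xi^r(t;a;r-\tilde r).
\]
Now apply Corollary \ref{q_a>a ur source} with the right-hand side $r-\tilde r$, bound $|r(s)-\tilde r(s)|$ by $\|r-\tilde r\|_{C[0,T]}$, and use the elementary identity \eqref{EST:1/q} to obtain $|u_\xi^r(t;a;r-\tilde r)|\le (\lambda_\xi q_a)^{-1}|g_\xi|\,\|r-\tilde r\|_{C[0,T]}$. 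Summing in $\xi$ and invoking the Cauchy--Schwarz inequality together with assumption \eqref{gamma F} yields
\[
\|K[r]-K[\tilde r]\|_{C[0,T]}\le \frac{Q_a\,C_F\,\|g\|_{\mathcal{H}^{\gamma}}}{q_a\,|F[g]|}\,\|r-\tilde r\|_{C[0,T]},
\]
so that $K$ is in fact Lipschitz on $D$. (The Lipschitz constant need not be less than $1$, which is exactly why one appeals to Schauder's rather than Banach's fixed point theorem.)

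Once continuity is established, Schauder's fixed point theorem produces $r\in D$ with $K[r]=r$, which is the claim. I do not anticipate any serious obstacle in this concluding step; all the hard analysis --- the a priori bound giving the radius $C_2$, and the equicontinuity estimate resting on the generalised mean value theorem (Theorem \ref{Cor}) and on the regularity bound \eqref{regularity u} --- has already been carried out in the preceding lemmas. The one point requiring a little care is the justification of the termwise operations on the series $\sum_{\xi\in\mathcal{I}}\lambda_\xi F[\omega_\xi]u_\xi(t;a;r)$ appearing implicitly above (interchanging limits with the sum), which is legitimate because \eqref{gamma F} and \eqref{regularity u} furnish uniform-in-$t$ convergence of that series.
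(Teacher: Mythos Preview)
Your proof is correct and follows essentially the same route as the paper: invoke Arzel\`a--Ascoli via Lemmas \ref{L:Bounded Source} and \ref{L:Equicon Source} to get relative compactness of $K(D)$, note that $D$ is closed and convex with $K(D)\subseteq D$, and apply Schauder's fixed point theorem. The one difference is that you additionally prove Lipschitz continuity of $K$; the paper omits this step entirely (its stated version of Schauder's theorem, Theorem \ref{Th:ShF}, does not list continuity among the hypotheses), so in that respect your argument is actually more complete than the paper's own proof.
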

\begin{proof}
In view of the Arzela-Ascoli Theorem \ref{Th:AA} and using Lemmas \ref{L:Bounded Source} and \ref{L:Equicon Source} we see that $K(D)$ is relatively compact in $C[0, T].$ Moreover, $K:D\rightarrow D$ and $D$ is a closed convex subset of $C[0,T]$. According to the Shauder fixed point Theorem \ref{Th:ShF}, the equation $$K[r]=r,\;r\in D,$$
has a solution $r=r^*\in D.$
\end{proof}

\subsection{Stability} Here we prove the following stability result for Problem \ref{P:ISP} with the trivial initial condition:
\begin{thm}\label{Thm:ISP}
Let $h=0$ and Assumption \ref{A:ISP} $(a),\,(b),\,(d)$ hold true. Suppose that $u$ satisfies the Cauchy problem \eqref{EQ:ISP}-\eqref{CON:IN Source} for $r\in C[0,T].$ Then there exist constants $C_4,C_5>0$ such that
\begin{equation}\label{EST:4<E<5}
    C_4\|\mathcal{D}^\alpha E\|_{C[0,T]}\leq \|r\|_{C[0,T]}\leq C_5\|\mathcal{D}^\alpha E\|_{C[0,T]}.
\end{equation}
\end{thm}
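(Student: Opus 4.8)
The plan is to establish the two-sided bound \eqref{EST:4<E<5} by returning to the identity that connects $r$ and $\mathcal{D}_t^\alpha E$, namely the relation obtained in \eqref{EQ:r}. Since $h=0$, we have $u_\xi(t;a;r)=u_\xi^r(t;a;r)$ for every $\xi\in\mathcal{I}$, so $U(t)$ in \eqref{EQ:U} involves only the right-hand-side part of the solution. The upper bound is essentially already contained in the argument for $K:D\to D$: tracking the constants through the estimate for $|U(t)|$ but now with $h=0$ (so the $\|h\|_{\mathcal{H}^{1+\gamma}}$-term vanishes), the Gronwall Lemma \ref{L:Gronwall} applied to $|U(t)|$ yields $|U(t)|\le C\,\|\mathcal{D}^\alpha E\|_{C[0,T]}$ for a constant $C$ depending only on $Q_a,q_a,C_F,|F[g]|,\alpha,T$, and then \eqref{EQ:r} gives $\|r\|_{C[0,T]}\le\bigl(\tfrac{1}{|F[g]|}+C\bigr)\|\mathcal{D}^\alpha E\|_{C[0,T]}=:C_5\|\mathcal{D}^\alpha E\|_{C[0,T]}$.

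For the lower bound, I would again use \eqref{EQ:r} in the form $\mathcal{D}_t^\alpha E(t)=F[g]\,r(t)-F[g]\,U(t)$, so that
\begin{equation*}
\|\mathcal{D}^\alpha E\|_{C[0,T]}\ge |F[g]|\,\|r\|_{C[0,T]}-|F[g]|\,\|U\|_{C[0,T]}.
\end{equation*}
Thus it suffices to bound $\|U\|_{C[0,T]}$ by a \emph{small multiple} of $\|r\|_{C[0,T]}$ on a short time interval, and then propagate. Concretely, from the definition of $U$, Corollary \ref{q_a>a ur source}, \eqref{EST:1/q} and the Cauchy–Schwarz splitting with exponent $\gamma$, one gets for $t\in[0,\tau]$
\begin{equation*}
|U(t)|\le \frac{C_F Q_a\|g\|_{\mathcal{H}^{1+\gamma}}}{q_a|F[g]|}\bigl(1-E_{\alpha,1}(-\lambda_{\min}q_a t^\alpha)\bigr)\|r\|_{C[0,\tau]},
\end{equation*}
or, using Lemma \ref{EST:1/Gamma} instead, a bound of the shape $\dfrac{C_F Q_a\|g\|_{\mathcal{H}^{1+\gamma}}}{\Gamma(\alpha+1)|F[g]|}\,\tau^\alpha\,\|r\|_{C[0,\tau]}$, which tends to $0$ as $\tau\to 0$. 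Choose $\tau_0>0$ so that this coefficient is $\le \tfrac12$; then on $[0,\tau_0]$ we obtain $\|\mathcal{D}^\alpha E\|_{C[0,\tau_0]}\ge \tfrac{|F[g]|}{2}\|r\|_{C[0,\tau_0]}$. To reach the full interval $[0,T]$ one iterates: shifting the base point from $0$ to $\tau_0,2\tau_0,\dots$, the same computation (now using ${}_{\tau_0}\mathcal{D}_t^\alpha$ and the mean-value machinery already set up in Lemma \ref{L:Equicon Source}) controls the increment of $U$ on each successive block of length $\tau_0$ in terms of $\|r\|$ on the current and previous blocks, and after finitely many steps (there are $\lceil T/\tau_0\rceil$ of them) one arrives at $\|r\|_{C[0,T]}\le C_4^{-1}\|\mathcal{D}^\alpha E\|_{C[0,T]}$ with $C_4$ depending only on the data.

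I expect the main obstacle to be the lower bound, specifically making the iteration over successive intervals clean: the history dependence of the fractional integral means that the estimate for $U$ on $[k\tau_0,(k+1)\tau_0]$ is not purely in terms of $\|r\|$ on that block but also picks up contributions from earlier blocks through the kernel $(t-s)^{\alpha-1}$, so one must either absorb these into a discrete Gronwall-type recursion in $k$ or, more smoothly, apply the singular Gronwall inequality of Lemma \ref{L:Gronwall} directly to $|U(t)|$ with $z(t)\equiv 0$ on short intervals — which, since the homogeneous conclusion of that lemma forces $y\equiv 0$, is exactly the mechanism that lets one conclude. An alternative cleaner route avoiding the iteration altogether: rearrange \eqref{EQ:r} to write $|r(t)|\le \tfrac{1}{|F[g]|}\|\mathcal{D}^\alpha E\|_{C[0,T]}+\tfrac{C_F Q_a\|g\|_{\mathcal{H}^{1+\gamma}}}{\Gamma(\alpha)|F[g]|}\int_0^t(t-\tau)^{\alpha-1}|r(\tau)|\,d\tau$ using only Lemma \ref{EST:1/Gamma}, then apply Lemma \ref{L:Gronwall} with the constant $z=\tfrac{1}{|F[g]|}\|\mathcal{D}^\alpha E\|_{C[0,T]}$ to get $\|r\|_{C[0,T]}\le \tfrac{1}{|F[g]|}E_{\alpha,1}\!\left(\tfrac{C_F Q_a\|g\|_{\mathcal{H}^{1+\gamma}}}{|F[g]|}T^\alpha\right)\|\mathcal{D}^\alpha E\|_{C[0,T]}$ for $C_5$, and for $C_4$ simply use $\|\mathcal{D}^\alpha E\|_{C[0,T]}\le |F[g]|\,\|r\|_{C[0,T]}+|F[g]|\,\|U\|_{C[0,T]}$ together with the already-derived bound $\|U\|_{C[0,T]}\le C_5'\,\|\mathcal{D}^\alpha E\|_{C[0,T]}$, which rearranges to $\|\mathcal{D}^\alpha E\|_{C[0,T]}\bigl(1+|F[g]|C_5'\bigr)\ge$ wait — rather, combine $\|r\|\le C_5\|\mathcal{D}^\alpha E\|$ with $\|\mathcal{D}^\alpha E\|=\|F[g](r-U)\|\le |F[g]|(1+ (\text{bound of }\|U\|/\|r\|))\|r\|$ only after establishing $\|U\|\le \theta\|r\|$ with $\theta<1$, which again needs the short-interval argument. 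So the genuinely load-bearing step is the estimate $\|U\|_{C[0,T]}\le \theta\,\|r\|_{C[0,T]}$ with an explicit $\theta$, for which the singular Gronwall inequality is the right tool.
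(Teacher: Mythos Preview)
Your argument for the upper bound $\|r\|_{C[0,T]}\le C_5\|\mathcal D^\alpha E\|_{C[0,T]}$ is fine, and in fact the ``alternative cleaner route'' you sketch at the end --- writing
\[
|r(t)|\le \frac{\|\mathcal D^\alpha E\|_{C[0,T]}}{|F[g]|}+\frac{C_FQ_a\|g\|_{\mathcal H^{1+\gamma}}}{\Gamma(\alpha)|F[g]|}\int_0^t(t-\tau)^{\alpha-1}|r(\tau)|\,d\tau
\]
and invoking Lemma~\ref{L:Gronwall} --- is exactly what the paper does to obtain $C_5$.

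The genuine gap is in the other inequality. You have the direction backwards. The ``lower bound'' in \eqref{EST:4<E<5} is $C_4\|\mathcal D^\alpha E\|_{C[0,T]}\le \|r\|_{C[0,T]}$, i.e.\ an \emph{upper} bound for $\|\mathcal D^\alpha E\|$ in terms of $\|r\|$. For this you want the ordinary triangle inequality
\[
|\mathcal D_t^\alpha E(t)|=|F[g]|\,|r(t)-U(t)|\le |F[g]|\bigl(|r(t)|+|U(t)|\bigr),
\]
and then \emph{any} bound of the form $\|U\|_{C[0,T]}\le C'\|r\|_{C[0,T]}$ suffices, with no smallness constraint on $C'$. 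Your reverse-triangle step $\|\mathcal D^\alpha E\|\ge |F[g]|\|r\|-|F[g]|\|U\|$ leads back to $\|r\|\le c^{-1}\|\mathcal D^\alpha E\|$, which is the upper bound again; this is why you were driven to seek $\theta<1$ and an iteration that you correctly sensed was getting out of hand.

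The paper's proof of the left inequality is a one-step estimate: from $\mathcal D_t^\alpha E = r(t)F[g]-a(t)\sum_\xi\lambda_\xi u_\xi F[\omega_\xi]$ one bounds the second sum using Corollary~\ref{q_a>a ur source} and the identity \eqref{EST:1/q}, which absorbs the factor $\lambda_\xi$ and leaves only $\sum_\xi F[\omega_\xi]|g_\xi|\le C_F\|g\|_{\mathcal H^\gamma}$ after Cauchy--Schwarz. This yields
\[
|\mathcal D_t^\alpha E|\le C_F\frac{Q_a+q_a}{q_a}\|g\|_{\mathcal H^\gamma}\,\|r\|_{C[0,T]},
\]
so $C_4=q_a\big/\bigl((Q_a+q_a)C_F\|g\|_{\mathcal H^\gamma}\bigr)$, with no Gronwall, no short intervals, and no propagation needed.
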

\begin{proof}
In view of \eqref{EQ:r} and \eqref{EQ:U}, we have
\begin{equation}\label{EQ:DE}\begin{split}
\mathcal{D}_t^\alpha E=r(t)\sum_{\xi\in\mathcal{I}}g_\xi F[\omega_\xi]-a(t)\sum_{\xi\in\mathcal{I}}\lambda_\xi u_\xi(t;a;r)F[\omega_\xi].
\end{split}\end{equation}
Taking into account \eqref{q<Q source}, one obtains
\begin{equation*}\begin{split}
|\mathcal{D}_t^\alpha E|&\leq |r(t)|\sum_{\xi\in\mathcal{I}}F[\omega_\xi]|g_\xi|+Q_a\sum_{\xi\in\mathcal{I}}\lambda_\xi F[\omega_\xi] |u_\xi(t;a;r)|.
\end{split}\end{equation*}
Let us estimate each term separately. By the Cauchy-Schwartz inequality and \eqref{gamma F}, we have the estimate for the first term
\begin{align*}
    |r(t)|\sum_{\xi\in\mathcal{I}}F[\omega_\xi]|g_\xi|\leq C_F\|g\|_{\mathcal{H}^{\gamma}}\|r\|_{C[0,T]}.
\end{align*}
In view of Corollary \ref{q_a>a ur source} for the second term, one obtains
\begin{equation*}\begin{split}
&Q_a\sum_{\xi\in\mathcal{I}}\lambda_\xi F[\omega_\xi] |u_\xi(t;a;r)|\\
&\leq Q_a\sum_{\xi\in\mathcal{I}}\lambda_\xi F[\omega_\xi]|g_\xi|\int_0^t |r(\tau)|(t-\tau)^{\alpha-1}E_{\alpha,\alpha}(-\lambda_\xi q_a (t-\tau)^\alpha) d\tau\\
&\leq Q_a\sum_{\xi\in\mathcal{I}} F[\omega_\xi]|g_\xi|\|r\|_{C[0,T]}\int_0^t\lambda_\xi (t-\tau)^{\alpha-1}E_{\alpha,\alpha}(-\lambda_\xi q_a (t-\tau)^\alpha) d\tau\\
&\leq C_F\frac{Q_a}{q_a}\|g\|_{\mathcal{H}^{\gamma}}\|r\|_{C[0,T]},
\end{split}\end{equation*}
where the last inequality is obtained by using \eqref{EST:1/q} and the Cauchy-Schwartz inequality. Combining the above-obtained estimates, we arrive at
$$|\mathcal{D}_t^\alpha E|\leq C_F\frac{Q_a+q_a}{q_a}\|g\|_{\mathcal{H}^{\gamma}}\|r\|_{C[0,T]},$$
yielding
$$
 C_4\|\mathcal{D}_t^\alpha E\|_{C[0,T]}\leq \|r\|_{C[0,T]},
 $$
where $C_4=\frac{q_a}{(Q_a+q_a)C_F\|g\|_{\mathcal{H}^{\gamma}}}.$
Hence the first inequality in \eqref{EST:4<E<5} is proved.

In view of \eqref{EQ:DE}, we can write $r$ in the following form:
\begin{equation}\label{4444}
    r(t)=\frac{\mathcal{D}_t^\alpha E+a(t)\sum_{\xi\in\mathcal{I}}\lambda_\xi u_\xi(t;a;r)F[\omega_\xi]}{F[g]}.
\end{equation}
Taking into account \eqref{q<Q source} and Corollary \ref{q_a>a ur source}, we have
\begin{align*}
    |r(t)|&\leq \frac{|\mathcal{D}_t^\alpha E|}{|F[g]|}+\frac{Q_a}{|F[g]|}\sum_{\xi\in\mathcal{I}}\lambda_\xi F[\omega_\xi] |u_\xi(t;a;r)|\\
    &\leq \frac{\|\mathcal{D}_t^\alpha E\|_{C[0,T]}}{|F[g]|}+\frac{Q_a}{|F[g]|}\sum_{\xi\in\mathcal{I}}\lambda_\xi F[\omega_\xi]|g_\xi|\int_0^t |r(\tau)|(t-\tau)^{\alpha-1}E_{\alpha,\alpha}(-\lambda_\xi q_a (t-\tau)^\alpha) d\tau.
    \end{align*}
Lemma \ref{EST:1/Gamma} implies
$$
\int_0^t |r(\tau)|(t-\tau)^{\alpha-1}E_{\alpha,\alpha}(-\lambda_\xi q_a (t-\tau)^\alpha) d\tau\leq \frac{1}{\Gamma(\alpha)}\int_0^t |r(\tau)|(t-\tau)^{\alpha-1}d\tau.
$$
Using this and the Cauchy-Schwartz inequality, one obtains
\begin{align*}
    |r(t)|&\leq \frac{\|\mathcal{D}_t^\alpha E\|_{C[0,T]}}{|F[g]|}+\frac{Q_a}{\Gamma(\alpha)|F[g]|}\sum_{\xi\in\mathcal{I}}\lambda_\xi F[\omega_\xi]|g_\xi|\int_0^t |r(\tau)|(t-\tau)^{\alpha-1}d\tau\\
    &\leq \frac{\|\mathcal{D}_t^\alpha E\|_{C[0,T]}}{|F[g]|}+\frac{Q_a}{\Gamma(\alpha)|F[g]|}C_F\left(\sum_{\xi\in\mathcal{I}}|\lambda_\xi^{1+\gamma}g_\xi|^2\right)^\frac{1}{2}\int_0^t |r(\tau)|(t-\tau)^{\alpha-1}d\tau\\
    &\leq \frac{\|\mathcal{D}_t^\alpha E\|_{C[0,T]}}{|F[g]|}+\frac{Q_aC_F\|g\|_{\mathcal{H}^{1+\gamma}}}{\Gamma(\alpha)|F[g]|}\int_0^t |r(\tau)|(t-\tau)^{\alpha-1} d\tau.
    \end{align*}
Applying Lemma \ref{L:Gronwall} to $|r(t)|$, one has
\begin{equation*}
    |r(t)|\leq \frac{\|\mathcal{D}_t^\alpha E\|_{C[0,T]}}{|F[g]|}E_{\alpha,1}\left(\frac{Q_a C_F\|g\|_{\mathcal{H}^{1+\gamma}}}{|F[g]|}t^\alpha\right).
\end{equation*}
This yields
\begin{equation}\label{5555}
\|r\|_{C[0,T]}\leq C_5 \|\mathcal{D}_t^\alpha E\|_{C[0,T]},
\end{equation}
where $C_5=\frac{1}{|F[g]|}E_{\alpha,1}\left(\frac{Q_a C_F\|g\|_{\mathcal{H}^{1+\gamma}}}{|F[g]|}T^\alpha\right).$ That is, the second inequality in \eqref{EST:4<E<5} is proved. Thus, the proof of Theorem \ref{Thm:ISP} is complete.
\end{proof}

\subsection{Uniqueness} In this part, we prove the uniqueness result for the fixed point of $K$.

\begin{thm}\label{uniqueness source}
There is at most one fixed point of $K$ in $D.$
\end{thm}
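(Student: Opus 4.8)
The plan is to run the standard linearity-plus-Gronwall argument. Suppose $r_1,r_2\in D$ both satisfy $K[r_i]=r_i$. Writing the defining formula \eqref{Operator K source} for $i=1,2$ and subtracting, the term $\mathcal{D}_t^\alpha E/F[g]$ cancels; moreover, by Lemma \ref{L:16} we may use the decomposition $u_\xi(t;a;r_i)=u_\xi^r(t;a;r_i)+u_\xi^i(t;a)$, and the initial-condition part $u_\xi^i(t;a)$ does not depend on $r$, so it cancels as well. Hence
\begin{equation*}
r_1(t)-r_2(t)=\frac{a(t)}{F[g]}\sum_{\xi\in\mathcal{I}}\lambda_\xi F[\omega_\xi]\bigl(u_\xi^r(t;a;r_1)-u_\xi^r(t;a;r_2)\bigr),\qquad t\in[0,T].
\end{equation*}

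Next I would exploit the linearity of the direct problem in its source. The difference $w_\xi:=u_\xi^r(\cdot;a;r_1)-u_\xi^r(\cdot;a;r_2)$ solves $\mathcal{D}_t^\alpha w_\xi+\lambda_\xi a(t)w_\xi=(r_1-r_2)(t)\,g_\xi$ with $w_\xi(0)=0$, i.e. it is exactly the object controlled by Corollary \ref{q_a>a ur source} with $r$ replaced by $r_1-r_2$. Thus
\begin{equation*}
|w_\xi(t)|\leq |g_\xi|\int_0^t |r_1(s)-r_2(s)|\,(t-s)^{\alpha-1}E_{\alpha,\alpha}(-\lambda_\xi q_a(t-s)^\alpha)\,ds.
\end{equation*}
Bounding $E_{\alpha,\alpha}(-\lambda_\xi q_a(t-s)^\alpha)\leq 1/\Gamma(\alpha)$ by Lemma \ref{EST:1/Gamma}, using $0<a(t)<Q_a$ from \eqref{q<Q source}, and applying the Cauchy--Schwarz inequality together with assumption \eqref{gamma F} exactly as in the proof that $K$ maps $D$ to $D$, I would arrive at the weakly singular integral inequality
\begin{equation*}
|r_1(t)-r_2(t)|\leq \frac{Q_a C_F\|g\|_{\mathcal{H}^{1+\gamma}}}{\Gamma(\alpha)\,|F[g]|}\int_0^t |r_1(s)-r_2(s)|\,(t-s)^{\alpha-1}\,ds,\qquad t\in[0,T].
\end{equation*}

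Finally I would invoke the Gronwall-type Lemma \ref{L:Gronwall} with $z\equiv 0$, which forces $|r_1(t)-r_2(t)|\leq 0$ for all $t\in[0,T]$, hence $r_1\equiv r_2$, proving uniqueness. There is essentially no obstacle here: the only points requiring a line of justification are the cancellation of the $u^i$-part (this is where Lemma \ref{L:16} enters) and the commutation of $F$ with the eigenfunction expansion needed to read off the subtracted identity for $r_1-r_2$ (immediate from the linearity and boundedness of $F$ on $\mathcal{H}^{1+\gamma}$ already used above). I would also remark that, in contrast to the stability estimate of Theorem \ref{Thm:ISP}, this uniqueness argument does not require $h=0$, precisely because the $h$-dependent part of the solution is common to $r_1$ and $r_2$ and drops out.
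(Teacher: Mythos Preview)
Your proposal is correct and follows essentially the same route as the paper: subtract the two fixed-point identities so that the $\mathcal{D}_t^\alpha E$ term and the $h$-dependent part cancel, then use Corollary \ref{q_a>a ur source}, Lemma \ref{EST:1/Gamma}, Cauchy--Schwarz with \eqref{gamma F}, and finally Lemma \ref{L:Gronwall} with $z\equiv 0$. The paper phrases the subtraction via the PDE and the additional condition (arriving at \eqref{3333}) and then points to the estimates \eqref{4444}--\eqref{5555}, whereas you subtract the operator formula \eqref{Operator K source} directly and invoke Lemma \ref{L:16} to drop $u^i_\xi$; the resulting integral inequality and Gronwall step are identical.
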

\begin{proof}
Let $r_1,r_2\in D$ be any fixed points of $K$. Then $u(t;a;r_1)$ and $u(t;a;r_2)$ satisfy the equation \eqref{EQ:ISP} with the Cauchy condition
\begin{equation*}
    \begin{cases}
        &\mathcal{D}_t^\alpha u(t;a;r_1)+a(t)\mathcal{L}u(t;a;r_1)=r_1(t)g,\;u(0;a;r_1)=h;\\
        &\mathcal{D}_t^\alpha u(t;a;r_2)+a(t)\mathcal{L}u(t;a;r_2)=r_2(t)g,\;u(0;a;r_2)=h,
    \end{cases}
\end{equation*}
and the additional information \eqref{CON:ADD Source}:
    $$F[u(t;a;r_1)]=E(t),$$
    $$F[u(t;a;r_2)]=E(t).$$
Applying $\mathcal{D}_t^\alpha$ to these, one gets
$$
F[\mathcal{D}_t^\alpha u(t;a;r_1)]=\mathcal{D}_t^\alpha E,
$$
$$
F[\mathcal{D}_t^\alpha u(t;a;r_2)]=\mathcal{D}_t^\alpha E,
$$
which yields
\begin{equation}\label{2222}
    \begin{cases}
    &\mathcal{D}_t^\alpha w(t;a;r)+a(t)\mathcal{L}w(t;a;r)=r(t)g,\\
    &w(0)=0,\\
    &F[w(t;a;r)]=0, 
    \end{cases}
\end{equation}
and
\begin{equation}\label{1111}
    F[\mathcal{D}_t^\alpha w(t;a;r)]=0.
\end{equation}
where $w(t;a;r)=u(t;a;r_1)-u(t;a;r_2)$ and $r(t)=r_1(t)-r_2(t).$ 
Applying $F$ on both sides of the first equation of \eqref{2222} and taking into account \eqref{1111}, we get
\begin{equation}\label{3333}
    r(t)=\frac{a(t)\sum_{\xi\in\mathcal{I}}\lambda_\xi w_\xi(t;a;r)F[\omega_\xi]}{F[g]},
\end{equation}
where $w_\xi(t;a;r)=u_\xi(t;a;r_1)-u_\xi(t;a;r_2).$
Then we estimate \eqref{3333} by repeating the estimation procedure of \eqref{4444}, that is, from \eqref{4444} till \eqref{5555}. Finally, we get 
$$
\|r\|_{C[0,T]}\leq 0.
$$ 
This implies that $r_1\equiv r_2.$
\end{proof}

\subsection{Continuous dependence of $(r,u)$ on the data} In this section we investigate the dependence of $(r,u)$ on the data.

\begin{thm}\label{continuous source}
Let Assumption \ref{A:ISP} hold true. Then the solution $(r,u)$ of the problem \eqref{EQ:ISP}--\eqref{CON:ADD Source} depends continuously on the data, that is, there exist positive constants $C_{13}$ and $C_{17},$ such that
\begin{equation*}
\begin{split}
&\|\tilde{r}-r\|_{C[0,T]}\\
&\leq C_{13}\left(\|\tilde{a}-a\|_{C[0,T]}+\|\tilde{g}-g\|_{\mathcal{H}^{1+\gamma}}+\|\tilde{h}-h\|_{\mathcal{H}^{2+\gamma}}+\|\tilde{E}-E\|_{X^\alpha[0,T]}\right),
\end{split}
\end{equation*}
and
\begin{align*}
    \|\tilde{u}-u\|_{C([0,T];\mathcal{H})}&\leq C_{17}\big(\|\tilde{a}-a\|_{C[0,T]}\\&+\|\tilde{g}-g\|_{\mathcal{H}^{1+\gamma}}+\|\tilde{h}-h\|_{\mathcal{H}^{2+\gamma}}+\|\tilde{E}-E\|_{X^\alpha[0,T]}\big),
\end{align*}
where $(\tilde{r},\tilde{u})$ is the solution of the inverse problem \eqref{EQ:ISP}--\eqref{CON:ADD Source} corresponding to the set of data $\left\{\tilde{a},\tilde{g},\tilde{h},\tilde{E}\right\},$ that satisfy Assumption \ref{A:ISP}.
\end{thm}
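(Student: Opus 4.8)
The plan is to compare the two instances of the fixed-point identity \eqref{EQ:r}--\eqref{EQ:U}, reduce the problem to a weakly singular Gronwall inequality for $\tilde r-r$, and then propagate the resulting bound to $\tilde u-u$ through the componentwise equations \eqref{FODE source}. Before anything else I would restrict attention to a fixed small neighbourhood of the data: assuming $\|\tilde a-a\|_{C[0,T]}$ and $\|\tilde g-g\|_{\mathcal H^{1+\gamma}}$ are small enough, Assumption \ref{A:ISP} guarantees $\tilde a(t)\ge q:=q_a/2>0$ and $|F[\tilde g]|\ge|F[g]|/2$ on $[0,T]$, while \eqref{regularity u} and Remark \ref{r:1} bound $\|u\|_{C([0,T];\mathcal H^{2+\gamma})}$ and $\|\tilde u\|_{C([0,T];\mathcal H^{2+\gamma})}$ by a constant independent of the perturbation; note also that $C_F$ from \eqref{gamma F} only involves the fixed data $\mathcal L$ and $F$. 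I abbreviate the right-hand sides of the two claimed estimates by $\Delta:=\|\tilde a-a\|_{C[0,T]}+\|\tilde g-g\|_{\mathcal H^{1+\gamma}}+\|\tilde h-h\|_{\mathcal H^{2+\gamma}}+\|\tilde E-E\|_{X^\alpha[0,T]}$.

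The core object is $w_\xi(t):=u_\xi(t;\tilde a;\tilde r)-u_\xi(t;a;r)$. Subtracting the two copies of \eqref{FODE source}, $w_\xi$ solves a fractional ODE with damping coefficient $\lambda_\xi\tilde a(t)$, initial value $\tilde h_\xi-h_\xi$, and source term $\tilde r(t)(\tilde g_\xi-g_\xi)+(\tilde r(t)-r(t))g_\xi-\lambda_\xi(\tilde a(t)-a(t))u_\xi(t;a;r)$. Arguing exactly as for Corollaries \ref{q_a>a ur source} and \ref{q_a>a ui source} (only the damping changes, and $\tilde a\ge q$), I obtain a pointwise bound on $|w_\xi(t)|$ by $|\tilde h_\xi-h_\xi|$ plus the Duhamel integral of the source against $(t-\tau)^{\alpha-1}E_{\alpha,\alpha}(-\lambda_\xi q(t-\tau)^\alpha)$. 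For the $r$-estimate I then multiply by $\lambda_\xi F[\omega_\xi]$, sum over $\xi$, and apply the Cauchy--Schwarz inequality, always shifting the weight $\lambda_\xi^\gamma$ onto $g$, $h$ and $u$ so that $C_F$ appears. The $\tilde g_\xi-g_\xi$ and $\tilde a-a$ pieces are absorbed using \eqref{EST:1/q} (which cancels an extra power of $\lambda_\xi$) together with \eqref{regularity u}, giving contributions $\lesssim\Delta$; the $(\tilde r-r)g_\xi$ piece is treated with $E_{\alpha,\alpha}\le 1/\Gamma(\alpha)$ from Lemma \ref{EST:1/Gamma} and produces a memory term $\lesssim\int_0^t|\tilde r(\tau)-r(\tau)|(t-\tau)^{\alpha-1}\,d\tau$.

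Feeding this into the difference of \eqref{EQ:r}, after also telescoping $\frac{\mathcal D_t^\alpha\tilde E}{F[\tilde g]}-\frac{\mathcal D_t^\alpha E}{F[g]}$ and $\tilde U-U$ in the standard way (writing differences of quotients over a common denominator and using $|F[g-\tilde g]|\le C_F\|\tilde g-g\|_{\mathcal H^\gamma}$ and $\bigl|\sum_\xi\lambda_\xi F[\omega_\xi]u_\xi\bigr|\le C_F\|u\|_{\mathcal H^{1+\gamma}}$ for the lower-order pieces), I expect to arrive at $|\tilde r(t)-r(t)|\le A\Delta+B\int_0^t|\tilde r(\tau)-r(\tau)|(t-\tau)^{\alpha-1}\,d\tau$ for suitable constants $A,B>0$ of the kind appearing in $C_1,C_2$. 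Lemma \ref{L:Gronwall} with constant majorant $z\equiv A\Delta$ then gives $\|\tilde r-r\|_{C[0,T]}\le A\,E_{\alpha,1}(B\Gamma(\alpha)T^\alpha)\Delta=:C_{13}\Delta$.

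For the second inequality I reuse the pointwise bound on $|w_\xi(t)|$, but now square in $\xi$ and sum; no $\lambda_\xi^\gamma$ weights are needed, since $\sum_\xi|\tilde h_\xi-h_\xi|^2=\|\tilde h-h\|_{\mathcal H}^2$, the $\tilde g$- and $(\tilde r-r)$-terms are controlled by $\tfrac{T^\alpha}{\Gamma(\alpha+1)}$ after $E_{\alpha,\alpha}\le 1/\Gamma(\alpha)$, and the $\tilde a$-term by \eqref{EST:1/q} together with $\sum_\xi\|u_\xi(\cdot;a;r)\|_{C[0,T]}^2\le\|u\|_{C([0,T];\mathcal H)}^2$. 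Using $\|\tilde r-r\|_{C[0,T]}\le C_{13}\Delta$ from the previous step and the trivial bounds $\|\cdot\|_{\mathcal H}\le\|\cdot\|_{\mathcal H^{1+\gamma}}\le\|\cdot\|_{\mathcal H^{2+\gamma}}$, Minkowski's inequality yields $\|\tilde u(t)-u(t)\|_{\mathcal H}\le C_{17}\Delta$ uniformly in $t$, which is the claim. I expect the only genuinely delicate point to be the self-referential occurrence of $\tilde r-r$ inside the forcing of $w_\xi$, which is exactly what the weakly singular Gronwall Lemma \ref{L:Gronwall} is designed to handle; the remainder is careful bookkeeping, in particular keeping every constant uniform over the chosen neighbourhood of the data and always routing the weight $\lambda_\xi^\gamma$ onto the factors that \eqref{gamma F} and \eqref{regularity u} control.
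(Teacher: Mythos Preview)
Your plan is sound and follows the same architecture as the paper: subtract the two problems, extract a weakly singular Gronwall inequality for $|\tilde r-r|$ from the weighted series $\sum_\xi\lambda_\xi F[\omega_\xi]w_\xi$, invoke Lemma~\ref{L:Gronwall}, and then feed the resulting bound back into the componentwise Duhamel estimate for $w=\tilde u-u$. Two points are worth comparing.

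First, the paper does not telescope the fixed-point identity \eqref{EQ:r} with the two different denominators $F[g]$ and $F[\tilde g]$. Instead it applies $F$ directly to the subtracted equation $\mathcal D_t^\alpha w+\tilde a(t)\mathcal Lw=f$ and solves for $\tilde r-r$; this leaves only $F[g]$ in the denominator, the $\tilde g$-dependence surfacing merely as the additive term $\tilde r(t)\,F[\tilde g-g]$. Consequently your preliminary neighbourhood restriction $|F[\tilde g]|\ge|F[g]|/2$ is unnecessary. You still need a positive lower bound for $\tilde a$, but that is already built into Assumption~\ref{A:ISP} for the perturbed data. Your telescoping works, it just imports an avoidable hypothesis.

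Second, in your $\tilde u-u$ step the asserted inequality
\[
\sum_{\xi}\|u_\xi(\cdot;a;r)\|_{C[0,T]}^2\le\|u\|_{C([0,T];\mathcal H)}^2
\]
is false in general: the supremum over $t$ and the sum over $\xi$ do not commute in that direction. The paper avoids this by first replacing each $E_{\alpha,\alpha}(-\lambda_\xi q_{\tilde a}(t-\tau)^\alpha)$ with the $\xi$-independent majorant $E_{\alpha,\alpha}(-\inf_\xi\lambda_\xi\, q_{\tilde a}(t-\tau)^\alpha)$ via Corollary~\ref{L:E1E2 ur source}, and then applying H\"older in~$\tau$, so that $\sum_\xi|p_\xi(\tau)|^2=\|p(\tau)\|_{\mathcal H}^2$ appears at a fixed time; the $(\tilde a-a)\mathcal L u$ contribution is then controlled by $\|\mathcal L u\|_{C([0,T];\mathcal H)}$, which \eqref{regularity u} bounds. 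Your route can be repaired too, since Corollaries~\ref{q_a>a ur source} and~\ref{q_a>a ui source} give the uniform-in-$t$ bound $|u_\xi(t)|\le|h_\xi|+|g_\xi|\,\|r\|_{C[0,T]}/(\lambda_\xi q_a)$ and hence $\sum_\xi\|u_\xi\|_{C[0,T]}^2\lesssim\|h\|_{\mathcal H}^2+\|g\|_{\mathcal H}^2$; but the justification you wrote is not the right one.
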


\begin{proof}
Let $\Upsilon=\{a,g,h,E\}$ and $\tilde{\Upsilon}=\left\{\tilde{a},\tilde{g},\tilde{h},\tilde{E}\right\}$ be two sets of data that satisfy Assumption \ref{A:ISP}. And, suppose that $(r,u)$ and $(\tilde{r},\tilde{u})$ are solutions of the inverse problem \eqref{EQ:ISP}--\eqref{CON:ADD Source} corresponding to the data $\Upsilon$ and $\tilde{\Upsilon}$. That is,
\begin{equation}\label{EQ:u inverse source}
  \begin{cases}
  &\mathcal{D}_t^\alpha u(t;a;r)+a(t)\mathcal{L}u(t;a;r)=r(t)g,\\
  &u(0)=h,\\
  &F[u(t;a;r)]=E(t),\\
  \end{cases}
\end{equation}
and
\begin{equation}\label{EQ:ap u inverse source}
\begin{cases}
  &\mathcal{D}_t^\alpha\tilde{u}(t;\tilde{a};\tilde{r})+\tilde{a}(t)\mathcal{L}\tilde{u}(t;\tilde{a};\tilde{r})=\tilde{r}(t)\tilde{g},\\
  &\tilde{u}(0)=\tilde{h},\\
  &F[\tilde{u}(t;\tilde{a};\tilde{r})]=\tilde{E}(t),
  \end{cases}
\end{equation}
respectively. 

Subtracting equations \eqref{EQ:u inverse source} and \eqref{EQ:ap u inverse source} from each other, we have
\begin{equation}\label{EQ:differ u-tilde u source}
\begin{cases}
  &\mathcal{D}_t^\alpha w+\tilde{a}(t)\mathcal{L}w(t)=f(t),\\
  &w(0)=\tilde{h}-h,\\
  &F[w(t)]=\tilde{E}(t)-E(t),
  \end{cases}
\end{equation}
where $w(t)=\tilde{u}(t;\tilde{a};\tilde{r})-{u}(t;a;r)$ and $f(t)=\tilde{r}(t)(\tilde{g}-g)+g(\tilde{r}-r)-(\tilde{a}(t)-a(t))\mathcal{L}u(t;a;r).$ 

Similarly \eqref{expantion u source}, we can write the solution $u(t;\tilde{a};\tilde{r})$ of the problem \eqref{EQ:ap u inverse source} in the form $u(t;\tilde{a};\tilde{r})=\sum_{\xi\in\mathcal{I}}u_\xi(t;\tilde{a};\tilde{r})\omega_\xi.$ This together with \eqref{expantion u source} give us 
\begin{equation}\label{EXPANTION w sourece}
    w(t)=\sum_{\xi\in\mathcal{I}}w_\xi(t;\tilde{a})\omega_\xi,\;t\in [0,T],
\end{equation}
where $w_\xi(t;\tilde{a})=\tilde{u}_\xi(t;\tilde{a};\tilde{r})-{u}_\xi(t;a;r),$ and it satisfies the fractional equation
\begin{equation*}\label{EQ:ode w}
   \mathcal{D}_t^\alpha w_\xi(t;\tilde{a})+\lambda_\xi\tilde{a}(t)w_\xi(t;\tilde{a})=f_\xi(t),\;w_\xi(0;\tilde{a})=\tilde h_\xi-h_\xi,\;\xi\in\mathcal{I}, 
\end{equation*}
where $f_\xi(t)=(f(t),\omega_\xi)_\mathcal{H}.$

Acting by $\mathcal{D}_t^\alpha$ on both sides of the last equation of \eqref{EQ:differ u-tilde u source} and taking into account \eqref{change order}, one obtains
\begin{equation}\label{EQ:FD u-tilde u source}
    F[\mathcal{D}_t^\alpha w(t)]=\mathcal{D}_t^\alpha\tilde{E}-\mathcal{D}_t^\alpha E.
\end{equation}

Applying $F$ on both sides of the first equation of \eqref{EQ:differ u-tilde u source}, we get
\begin{equation*}
  \tilde{r}(t)-r(t)=\frac{F[\mathcal{D}_t^\alpha w(t)]+\tilde{a}(t)F[\mathcal{L}w(t)]+(\tilde{a}(t)-a(t))F[\mathcal{L}u(t;a;r)]-\tilde{r}(t)F[\tilde{g}-g]}{F[g]}. 
\end{equation*}

We can write $g$ and $\tilde{g}$ in the expansion forms, i.e. $g=\sum_{\xi\in\mathcal{I}}g_\xi\omega_\xi$ and $\tilde{g}=\sum_{\xi\in\mathcal{I}}\tilde{g}_\xi\omega_\xi$ respectively. These together with \eqref{expantion u source}, \eqref{EXPANTION w sourece} and \eqref{EQ:FD u-tilde u source} yield that
\begin{equation}\begin{split}\label{r-r}
  \tilde{r}(t)-r(t)&=\frac{1}{F[g]}\biggl(\mathcal{D}_t^\alpha\tilde{E}-\mathcal{D}_t^\alpha E+(\tilde{a}(t)-a(t))\sum_{\xi\in\mathcal{I}}\lambda_\xi F[\omega_\xi]u_\xi(t;{a};r)\\
  &+\tilde{a}(t)\sum_{\xi\in\mathcal{I}}\lambda_\xi F[\omega_\xi]w_\xi(t;\tilde{a})-\tilde{r}(t)\sum_{\xi\in\mathcal{I}}F[\omega_\xi](\tilde{g}_\xi-g_\xi)\biggr). 
\end{split}\end{equation}
To estimate the difference \eqref{r-r}, let us first estimate the series $\sum_{\xi\in\mathcal{I}}\lambda_\xi F[\omega_\xi]w_\xi(t;\tilde{a}).$ In view of Corollaries \ref{q_a>a ur source} and \ref{q_a>a ui source}, we have
\begin{equation*}
    \begin{split}
        &\sum_{\xi\in\mathcal{I}}\lambda_\xi F[\omega_\xi]|w_\xi(t;\tilde{a}|\leq \sum_{\xi\in\mathcal{I}}\lambda_\xi F[\omega_\xi]|\tilde{h}_\xi-h_\xi|E_{\alpha,1}(-\lambda_\xi q_{\tilde{a}}t^\alpha)\\
        &+\sum_{\xi\in\mathcal{I}}\lambda_\xi F[\omega_\xi]|\tilde{g}_\xi-g_\xi|\int_0^t|\tilde{r}(s)|(t-s)^{\alpha-1}E_{\alpha,\alpha}(-\lambda_\xi q_{\tilde{a}}(t-s)^\alpha)ds\\
        &+\sum_{\xi\in\mathcal{I}}\lambda_\xi F[\omega_\xi]\int_0^t|u_\xi(s;a)||\tilde{a}(s)-a(s)|\lambda_\xi(t-s)^{\alpha-1}E_{\alpha,\alpha}(-\lambda_\xi q_{\tilde{a}}(t-s)^\alpha)ds\\
        &+\sum_{\xi\in\mathcal{I}}\lambda_\xi F[\omega_\xi]g_\xi \int_0^t|\tilde{r}(s)-r(s)|(t-s)^{\alpha-1}E_{\alpha,\alpha}(-\lambda_\xi q_{\tilde{a}}(t-s)^\alpha)ds\\
        &=I_1(t)+I_2(t)+I_3(t)+I_4(t).
        \end{split}
\end{equation*}
Now, we estimate each of the four terms separately.

Using \eqref{EST: Mittag} and the Cauchy-Schwartz inequality, one has
\begin{equation*}\begin{split}
I_1(t)&\leq \sum_{\xi\in\mathcal{I}}\lambda_\xi F[\omega_\xi]|\tilde{h}_\xi-h_\xi|\leq C_F\left(\sum_{\xi\in\mathcal{I}}\lambda_\xi^{2(1+\gamma)}|\tilde{h}_\xi-h_\xi|^2\right)^\frac{1}{2}\\
&\leq C_6\|\tilde{h}-h\|_{\mathcal{H}^{1+\gamma}}.
\end{split}\end{equation*}
In view of \eqref{EST:1/q}, we get
\begin{align*}
    I_2(t)&\leq \sum_{\xi\in\mathcal{I}} F[\omega_\xi]|\tilde{g}_\xi-g_\xi|\|\tilde{r}\|_{C[0,T]}\int_0^t\lambda_\xi(t-s)^{\alpha-1}E_{\alpha,\alpha}(-\lambda_\xi q_{\tilde{a}}(t-s)^\alpha)ds\\
    &\leq \frac{1}{q_{\tilde{a}}}\|\tilde{r}\|_{C[0,T]} C_F\left(\sum_{\xi\in\mathcal{I}}\lambda_\xi^{2\gamma}|\tilde{g}_\xi-g_\xi|^2\right)^{\frac{1}{2}}\leq C_7\|\tilde{g}-g\|_{\mathcal{H}^\gamma}. 
\end{align*}

Using Corollary \ref{L:E1E2 ur source}, we have
\begin{align*}
    I_3(t)&\leq \sum_{\xi\in\mathcal{I}}\lambda_\xi F[\omega_\xi]\|\tilde{a}-a\|_{C[0,T]}\int_0^t|u_\xi(s;a)|\lambda_\xi(t-s)^{\alpha-1}E_{\alpha,\alpha}(-\lambda_\xi q_{\tilde{a}}(t-s)^\alpha)ds\\
    &\leq\|\tilde{a}-a\|_{C[0,T]}\int_0^t\left\{\sum_{\xi\in\mathcal{I}}\lambda_\xi^2 F[\omega_\xi]|u_\xi(s;a)|\right\}(t-s)^{\alpha-1}E_{\alpha,\alpha}(-\inf_{\xi\in\mathcal{I}}\lambda_\xi q_{\tilde{a}}(t-s)^\alpha)ds\\
    &\leq \|\tilde{a}-a\|_{C[0,T]}\int_0^tC_F\left(\sum_{\xi\in\mathcal{I}}|\lambda_\xi^{2+\gamma}u_\xi(s;a)|^2\right)^\frac{1}{2}(t-s)^{\alpha-1}E_{\alpha,\alpha}(-\inf_{\xi\in\mathcal{I}}\lambda_\xi q_{\tilde{a}}(t-s)^\alpha)ds\\
    &\leq C_8\|u\|_{C([0,T];\mathcal{H}^{2+\gamma})}\|\tilde{a}-a\|_{C[0,T]}\int_0^t(t-s)^{\alpha-1}E_{\alpha,\alpha}(-\inf_{\xi\in\mathcal{I}}\lambda_\xi q_{\tilde{a}}(t-s)^\alpha)ds\\
    &\leq C_9\|\tilde{a}-a\|_{C[0,T]},
\end{align*}
where the last inequality is obtained by taking into account \eqref{EST:1/q} and \eqref{regularity u}. Lemma \ref{EST:1/Gamma} gives
\begin{align*}
  I_4(t)&\leq \frac{1}{\Gamma(\alpha)}\sum_{\xi\in\mathcal{I}}\lambda_\xi F[\omega_\xi]g_\xi \int_0^t|\tilde{r}(s)-r(s)|(t-s)^{\alpha-1}ds\\
  &\leq \frac{1}{\Gamma(\alpha)}C_F\left(\sum_{\xi\in\mathcal{I}}|\lambda_\xi^{1+\gamma}g_\xi|^2\right)^\frac{1}{2}\int_0^t|\tilde{r}(s)-r(s)|(t-s)^{\alpha-1}ds\\
  &\leq C_{10}\|g\|_{\mathcal{H}^{1+\gamma}}\frac{1}{\Gamma(\alpha)}\int_0^t|\tilde{r}(s)-r(s)|(t-s)^{\alpha-1}ds\\
  &\leq C_{11}\frac{1}{\Gamma(\alpha)}\int_0^t|\tilde{r}(s)-r(s)|(t-s)^{\alpha-1}ds.
\end{align*}

The estimates above obtained for $I_1(t),\,I_2(t),\,I_3(t)$ and $I_4(t)$ yield the following result:
\begin{equation*}
    \begin{split}
        &\sum_{\xi\in\mathcal{I}}\lambda_\xi F[\omega_\xi]|w_\xi(t;\tilde{a})|\\
    &\leq C_6\|\tilde{h}-h\|_{\mathcal{H}^{1+\gamma}}+C_7\|\tilde{g}-g\|_{\mathcal{H}^{\gamma}}+C_9\|\tilde{a}-a\|_{C[0,T]}\\
    &+C_{11}\frac{1}{\Gamma(\alpha)}\int_0^t |\tilde{r}(s)-r(s)|(t-s)^{\alpha-1}ds.
    \end{split}
\end{equation*}
Taking this into account and using H\"older inequality, from \eqref{r-r} we have
\begin{align*}
    |\tilde{r}(t)-r(t)|&\leq \frac{1}{|F[g]|}\biggl[\|\mathcal{D}_t^\alpha\tilde{E}-\mathcal{D}_t^\alpha {E}\|_{C[0,T]}+\|\tilde{a}-a\|_{C[0,T]}C_F\left(\sum_{\xi\in\mathcal{I}}|\lambda_\xi^{1+\gamma}u_\xi(t;a;r)|^2\right)^\frac{1}{2}\\
    &+\|\tilde{a}\|_{C[0,T]}\sum_{\xi\in\mathcal{I}}\lambda_\xi F[\omega_\xi]|w_\xi(t;\tilde{a})|+\|\tilde{r}\|_{C[0,T]}C_F\left(\sum_{\xi\in\mathcal{I}}|\lambda_\xi^{2\gamma}|\tilde{g}_\xi-g_\xi|^2\right)^\frac{1}{2}\biggr]\\
    &\leq \frac{1}{|F[g]|}\biggl[\|\mathcal{D}_t^\alpha\tilde{E}-\mathcal{D}_t^\alpha {E}\|_{C[0,T]}+\|\tilde{a}-a\|_{C[0,T]}C_F\|u(t;a;r)\|_{\mathcal{H}^{1+\gamma}}\\
    &+\|\tilde{a}\|_{C[0,T]}\sum_{\xi\in\mathcal{I}}\lambda_\xi F[\omega_\xi]|w_\xi(t;\tilde{a})|+\|\tilde{r}\|_{C[0,T]}C_F\|\tilde{g}_\xi-g_\xi\|_{\mathcal{H}^\gamma}\biggr]\\
&\leq C_{12}\biggl(\|\tilde{a}-a\|_{C[0,T]}+\|\tilde{g}-g\|_{\mathcal{H}^{1+\gamma}}+\|\tilde{h}-h\|_{\mathcal{H}^{2+\gamma}}\\
    &+\|\tilde{E}-E\|_{X^\alpha[0,T]}+\frac{1}{\Gamma(\alpha)}\int_0^t |\tilde{r}(s)-r(s)|(t-s)^{\alpha-1}ds\biggr).
\end{align*}
Applying Lemma \ref{L:Gronwall} to $|\tilde{r}(t)-r(t)|$, we have
\begin{equation*}
\begin{split}
|\tilde{r}(t)-r(t)|&\leq C_{12}E_{\alpha,1}(C_{12}t^\alpha)\\
&\times\left(\|\tilde{a}-a\|_{C[0,T]}+\|\tilde{g}-g\|_{\mathcal{H}^{1+\gamma}}+\|\tilde{h}-h\|_{\mathcal{H}^{2+\gamma}}+\|\tilde{E}-E\|_{X^\alpha[0,T]}\right).
\end{split}
\end{equation*}
This implies
\begin{equation}\label{cndp r}
\begin{split}
&\|\tilde{r}-r\|_{C[0,T]}\\
&\leq C_{13}\left(\|\tilde{a}-a\|_{C[0,T]}+\|\tilde{g}-g\|_{\mathcal{H}^{1+\gamma}}+\|\tilde{h}-h\|_{\mathcal{H}^{2+\gamma}}+\|\tilde{E}-E\|_{X^\alpha[0,T]}\right),
\end{split}
\end{equation}
where $C_{13}=C_{12} E_{\alpha,1}(C_{12}T^\alpha).$

Now, we are going to estimate the difference $\tilde{u}-u.$
In view of Corollary \ref{q_a>a ur source}, we have
\begin{equation}
    \begin{split}\label{cndp 0}
\|w^r(t;a;r)\|^2_{\mathcal{H}}&=\sum_{\xi\in\mathcal{I}}|w_\xi^r(t;a;r)|^2\\&=\sum_{\xi\in\mathcal{I}}\left|\int_0^t |p_\xi(\tau)|(t-\tau)^{\alpha-1}E_{\alpha,\alpha}(-\lambda_\xi q_{\tilde{a}} (t-\tau)^\alpha)d\tau\right|^2\\
&\leq \sum_{\xi\in\mathcal{I}}\left|\int_0^t |p_\xi(\tau)|(t-\tau)^{\alpha-1}E_{\alpha,\alpha}(-\inf_{\xi\in\mathcal{I}}\lambda_\xi q_{\tilde{a}} (t-\tau)^\alpha)d\tau\right|^2,
\end{split}\end{equation}
where the last inequality is obtained by using Corollary \ref{L:E1E2 ur source}. Using the Holder inequality we get
\begin{equation*}
    \begin{split}
&\sum_{\xi\in\mathcal{I}}\left|\int_0^t |p_\xi(\tau)|(t-\tau)^{\alpha-1}E_{\alpha,\alpha}(-\inf_{\xi\in\mathcal{I}}\lambda_\xi q_{\tilde{a}} (t-\tau)^\alpha)d\tau\right|^2\\
&\leq \sum_{\xi\in\mathcal{I}}\left(\int_0^t |p_\xi(\tau)|^2(t-\tau)^{\alpha-1}E_{\alpha,\alpha}(-\inf_{\xi\in\mathcal{I}}\lambda_\xi q_{\tilde{a}} (t-\tau)^\alpha)d\tau\right)\\
&\times \left(\int_0^t (t-\tau)^{\alpha-1}E_{\alpha,\alpha}(-\inf_{\xi\in\mathcal{I}}\lambda_\xi q_{\tilde{a}} (t-\tau)^\alpha)d\tau\right)\\
&\leq \left(\int_0^t \left\{\sum_{\xi\in\mathcal{I}}|p_\xi(\tau)|^2\right\}(t-\tau)^{\alpha-1}E_{\alpha,\alpha}(-\inf_{\xi\in\mathcal{I}}\lambda_\xi q_{\tilde{a}} (t-\tau)^\alpha)d\tau\right)\\
&\times \left(\int_0^t (t-\tau)^{\alpha-1}E_{\alpha,\alpha}(-\inf_{\xi\in\mathcal{I}}\lambda_\xi q_{\tilde{a}} (t-\tau)^\alpha)d\tau\right)\\
&\leq \|p\|_{C([0,T];\mathcal{H})}^2\left(\int_0^t(t-\tau)^{\alpha-1}E_{\alpha,\alpha}(-\inf_{\xi\in\mathcal{I}}\lambda_\xi q_{\tilde{a}} (t-\tau)^\alpha)d\tau\right)^2.
\end{split}\end{equation*}
Lemma \ref{L:Der E} together with \eqref{EST: Mittag} gives
\begin{equation*}\begin{split} 
&\int_0^t (t-\tau)^{\alpha-1}E_{\alpha,\alpha}(-\inf_{\xi\in\mathcal{I}}\lambda_\xi q_{\tilde{a}} (t-\tau)^\alpha)d\tau\\
&=\frac{1}{q_{\tilde{a}} \inf_{\xi\in\mathcal{I}}\lambda_\xi}(1-E_{\alpha,1}(-\inf_{\xi\in\mathcal{I}}\lambda_\xi q_{\tilde{a}} t^\alpha))<\frac{1}{q_{\tilde{a}} \inf_{\xi\in\mathcal{I}}\lambda_\xi}.
\end{split}\end{equation*}
Hence, 
\begin{equation}
    \begin{split}\label{cndp 1}
&\sum_{\xi\in\mathcal{I}}\left|\int_0^t |p_\xi(\tau)|(t-\tau)^{\alpha-1}E_{\alpha,\alpha}(-\inf_{\xi\in\mathcal{I}}\lambda_\xi q_{\tilde{a}} (t-\tau)^\alpha)d\tau\right|^2\\
&\leq \left(\frac{1}{\inf_{\xi\in\mathcal{I}}\lambda_\xi q_{\tilde{a}}}\right)^2\|p\|_{C([0,T];\mathcal{H})}^2.
\end{split}\end{equation}
Substituting \eqref{cndp 1} into \eqref{cndp 0}, we obtain
$$\|w^r(t;a;r)\|^2_{\mathcal{H}}\leq \left(\frac{1}{\inf_{\xi\in\mathcal{I}}\lambda_\xi q_{\tilde{a}}}\right)^2\|p\|_{C([0,T];\mathcal{H})}^2=C_{14}^2\|p\|_{C([0,T];\mathcal{H})}^2,$$
which gives
\begin{equation}\label{cndp 2}
    \|w^r\|_{C([0,T];\mathcal{H})}\leq C_{14} \|p\|_{C([0,T];\mathcal{H})}.
\end{equation}
Putting $p(t)=\tilde{r}(t)(\tilde{g}-g)+g(\tilde{r}(t)-r(t))-(\tilde{a}(t)-a(t))\mathcal{L}u(t;a;r)$ into \eqref{cndp 2}, we have
\begin{equation}\begin{split}\label{cndp 3}
    \|w^r\|_{C([0,T];\mathcal{H})}&\leq C_{14} \|p\|_{C([0,T];\mathcal{H})}\leq C_{14}\big(\|\tilde{r}\|_{C[0,T]}\|\tilde{g}-g\|_\mathcal{H}\\&+\|g\|_\mathcal{H}\|\|\tilde{r}-r\|_{C[0,T]}+\|\tilde{a}-a\|_{C[0,T]}\|\mathcal{L}u\|_{C([0,T];\mathcal{H})}\big)\\
    &\leq C_{15}\big(\|\tilde{g}-g\|_{\mathcal{H}^{1+\gamma}}+\|\tilde{r}-r\|_{C[0,T]}+\|\tilde{a}-a\|_{C[0,T]}\big).
  \end{split}  
\end{equation}
Substituting \eqref{cndp r} into \eqref{cndp 3}, we get
\begin{equation}\begin{split}\label{cndp 4}
\|w^r\|_{C([0,T];\mathcal{H})}&\leq C_{16}\big(\|\tilde{a}-a\|_{C[0,T]}\\&+\|\tilde{g}-g\|_{\mathcal{H}^{1+\gamma}}+\|\tilde{h}-h\|_{\mathcal{H}^{2+\gamma}}+\|\tilde{E}-E\|_{X^\alpha[0,T]}\big).
\end{split}\end{equation}
Using Corollary \ref{q_a>a ui source} and \eqref{EST: Mittag}, we have
\begin{equation*}
    \begin{split}
        \|w^i(t;a)\|_{\mathcal{H}}^2&\leq \sum_{\xi\in\mathcal{I}}|\tilde{h}_\xi-h_\xi|^2|E_{\alpha,1}(-\lambda_\xi q_a t^\alpha)|^2\\
        &\leq \sum_{\xi\in\mathcal{I}}|\tilde{h}_\xi-h_\xi|^2=\|\tilde{h}-h\|_\mathcal{H}^2,
    \end{split}
\end{equation*}
which gives
\begin{equation}\label{cndp 5}
    \|w^i\|_{C([0,T];\mathcal{H})}\leq \|\tilde{h}-h\|_\mathcal{H}.
\end{equation}
In view of $\tilde{u}(t;a;r)-u(t;a;r)=w(t;a;r)=w^i(t;a)+w^r(t;a;r)$ the estimates \eqref{cndp 4} and \eqref{cndp 5} give us
\begin{align*}
    \|\tilde{u}-u\|_{C([0,T];\mathcal{H})}&\leq C_{17}\big(\|\tilde{a}-a\|_{C[0,T]}\\&+\|\tilde{g}-g\|_{\mathcal{H}^{1+\gamma}}+\|\tilde{h}-h\|_{\mathcal{H}^{2+\gamma}}+\|\tilde{E}-E\|_{X^\alpha[0,T]}\big).
\end{align*}
This completes the proof.
\end{proof}

\subsection{Main theorem for the inverse problem}

In view of Lemma \ref{L:2}, Theorems \ref{existence source}, \ref{uniqueness source} allow us to deduce the existence and uniqueness of the solution $(r,u)$ of Problem \ref{P:ISP}. This with Theorem \ref{continuous source} gives us a well-posedness of Problem \ref{P:ISP}. From this we state the following theorem for this inverse problem.
\begin{thm}\label{Th:main theorem source}(Main theorem for the inverse problem)
   Suppose Assumption \ref{A:ISP} holds. Then the following statements hold true:\\ 
   $(a)$ There exists a unique fixed point of $K$ in $D;$\\
   $(b)$ The inverse Problem \ref{P:ISP} is well-posed.
\end{thm}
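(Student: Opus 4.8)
The plan is to assemble Theorem \ref{Th:main theorem source} entirely from the machinery already developed, treating part $(a)$ as a one-line combination of earlier results and part $(b)$ as the translation of the fixed-point analysis of $K$ back to Problem \ref{P:ISP}. For part $(a)$, the existence of a fixed point of $K$ in $D$ is exactly Theorem \ref{existence source}: $K$ maps the closed convex set $D\subset C[0,T]$ into itself, and $K(D)$ is uniformly bounded (Lemma \ref{L:Bounded Source}) and equicontinuous (Lemma \ref{L:Equicon Source}), hence relatively compact by Arzelà--Ascoli, so Schauder's theorem applies. Uniqueness is Theorem \ref{uniqueness source}. So $(a)$ requires only a sentence citing these two.

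For part $(b)$, I would spell out ``well-posed'' as existence, uniqueness, and continuous dependence for Problem \ref{P:ISP}, and verify each. For existence, take $r^{*}\in D$ the fixed point from $(a)$; since $r^{*}\in C[0,T]$ and, by Assumption \ref{A:ISP}, $h\in\mathcal{H}^{2+\gamma}\subset\mathcal{H}^{1}$ and $g\in\mathcal{H}^{1+\gamma}\subset\mathcal{H}$, Lemma \ref{L:2} produces a unique generalized solution $u=u(t;a;r^{*})$ of \eqref{EQ:ISP}--\eqref{CON:IN Source}, which by construction satisfies the equation and the initial condition. It remains to check the overdetermination \eqref{CON:ADD Source}. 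Writing $u_\xi=u_\xi(t;a;r^{*})$, from \eqref{FODE source} we have $\lambda_\xi a(t)u_\xi=r^{*}(t)g_\xi-\mathcal{D}_t^\alpha u_\xi$; multiplying by $F[\omega_\xi]$, summing over $\xi$, and using \eqref{change order} together with $\sum_{\xi\in\mathcal{I}}g_\xi F[\omega_\xi]=F[g]$ gives
$$
a(t)\sum_{\xi\in\mathcal{I}}\lambda_\xi u_\xi F[\omega_\xi]=r^{*}(t)F[g]-\mathcal{D}_t^\alpha F[u].
$$
Substituting this into the fixed-point identity $r^{*}=K[r^{*}]$, i.e. into \eqref{Operator K source}, collapses it to $\mathcal{D}_t^\alpha F[u]=\mathcal{D}_t^\alpha E$, hence $\mathcal{D}_t^\alpha\bigl(F[u]-E\bigr)\equiv 0$ on $(0,T]$. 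Since the Caputo derivative of order $\alpha\in(0,1)$ annihilates only constants on the relevant function class, $F[u(t)]-E(t)$ is constant; evaluating at $t=0$ (where $F[u(0)]=F[h]$) pins the constant down, and under the natural data compatibility $F[h]=E(0)$ it is zero, so \eqref{CON:ADD Source} holds and $(r^{*},u)$ solves Problem \ref{P:ISP}.

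For uniqueness of the solution, suppose $(r_1,u_1)$ and $(r_2,u_2)$ both solve Problem \ref{P:ISP}. Reversing the computation above — applying $F$ to \eqref{EQ:ISP} and $\mathcal{D}_t^\alpha$ to \eqref{CON:ADD Source}, exactly as in the derivation of \eqref{EQ:r} — shows that each $r_j$ is a fixed point of $K$ in $D$; by part $(a)$ (Theorem \ref{uniqueness source}) this forces $r_1\equiv r_2$, and then Lemma \ref{L:2} gives $u_1\equiv u_2$. Continuous dependence of $(r,u)$ on the data $\{a,g,h,E\}$ is precisely Theorem \ref{continuous source}. Collecting existence, uniqueness, and continuous dependence completes the proof of $(b)$.

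I expect the only genuinely delicate point to be the passage $\mathcal{D}_t^\alpha(F[u]-E)\equiv 0\Rightarrow F[u]\equiv E$: one must make sure the admissible data class actually yields equality rather than equality up to an additive constant, which is where the compatibility $F[h]=E(0)$ enters and should be recorded (it is automatic in the stability setting $h=0$ of Theorem \ref{Thm:ISP}). A secondary point is checking that $r^{*}\in D$ has enough regularity, namely $r^{*}\in C[0,T]$, for Lemma \ref{L:2} and for the manipulations involving $\mathcal{D}_t^\alpha$ and the interchange \eqref{change order} to be legitimate — but this is built into the definition of $D$. Everything else is bookkeeping across the already-established Theorems \ref{existence source}, \ref{uniqueness source}, \ref{continuous source} and Lemma \ref{L:2}.
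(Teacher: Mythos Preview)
Your proposal is correct and follows the same approach as the paper, which simply cites Lemma~\ref{L:2} and Theorems~\ref{existence source}, \ref{uniqueness source}, \ref{continuous source} in a single sentence to conclude. You are in fact more thorough than the paper: you explicitly verify that a fixed point of $K$ recovers the overdetermination $F[u]=E$ and correctly flag the compatibility condition $F[h]=E(0)$, a point the paper leaves entirely implicit.
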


\section{Examples of operator $\mathcal{L}$}\label{S:L}

In this section, we give several examples of the settings where our direct and inverse problems are applicable. Of course, there are many other examples; here we collect those for which different types of partial differential equation are of particular importance.

\begin{itemize}
  \item {\bf Sturm-Liouville problem.}\\
  First, we describe the setting of the Sturm-Liouville operator. Let $\mathcal{L}$ be the ordinary second-order differential operator in $L^2(a,b)$ generated by the differential expression
\begin{equation}\label{SL} \mathcal{L}(u)=-u''(x),\,\,a<x<b,\end{equation} and one of the boundary conditions \begin{equation}\label{SL_B} a_1u'(b)+b_1u(b)=0,\,a_2u'(a)+b_2u(a)=0,\end{equation} or \begin{equation}\label{SL_B1} u(a)=\pm u(b),\,u'(a)=\pm u'(b),\end{equation} where $a^2_1+a^2_2>0,\,b_1^2+b_2^2>0$ and $\alpha_j,\, \beta_j,\,j=1,2,$ are some real numbers.

It is known (\cite{Naimark}) that the Sturm-Liouville problem for equation \eqref{SL} with boundary conditions \eqref{SL_B} or with boundary conditions \eqref{SL_B1} is self-adjoint in $L^2(a,b).$ It is also known that the self-adjoint problem has real eigenvalues and their eigenfunctions form a complete orthonormal basis in $L^2(a,b).$
\end{itemize}

\begin{itemize}
  \item {\bf Differential operator with involution.}\\
As a next example, we consider the differential operator with involution in $L^2(0,\pi)$ generated by the expression
\begin{equation}\label{DOI} \mathcal{L}(u)=u''(x)-\varepsilon u''(\pi-x),\,\,0<x<\pi,\end{equation} and homogeneous Dirichlet conditions \begin{equation}\label{DOI_B} u(0)=0,\,u(\pi)=0,\end{equation} where $|\varepsilon|<1$ is some real number.

The non-local functional-differential operator \eqref{DOI}-\eqref{DOI_B} is self-adjoint \cite{torebek}. For $|\varepsilon|<1,$ the operator \eqref{DOI}-\eqref{DOI_B} has the following eigenvalues
\begin{align*}\lambda_{2k}=4(1+\varepsilon)k^2,\,k\in \mathbb{N},\,\,\, \textrm{and} \,\,\, \lambda_{2k+1}=(1-\varepsilon)(2k+1)^2,\,k\in \mathbb{N}\cup\{0\},\end{align*}
and corresponding eigenfunctions
\begin{align*}&u_{2k}(x)=\sqrt{\frac{2}{\pi}}\sin{2kx},\,k\in \mathbb{N},\\& u_{2k+1}(x)=\sqrt{\frac{2}{\pi}}\sin{(2k+1)x},\,k\in \mathbb{N}\cup \{0\}.\end{align*}
\end{itemize}

\begin{itemize}
  \item {\bf Fractional Sturm-Liouville operator.}\\
We consider the operator generated by the integro-differential expression
\begin{equation}\label{FSL} \mathcal{L}(u)=\mathcal{D}_{a+}^\alpha D_{b-}^{\alpha}u,\,a<x<b,\end{equation}
and the conditions \begin{equation}\label{FSL_B}I_{b-}^{1-\alpha}u(a)=0,\,I_{b-}^{1-\alpha}u(b)=0,\end{equation} where
$\mathcal{D}_{a+}^\alpha$ is the left Caputo derivative of order $\alpha  \in
\left( {1/2,1} \right],$  $D_{b-}^\alpha$ is the right Riemann-Liouville derivative of order $\alpha \in
\left( {1/2,1} \right]$ and $I_{b-}^\alpha $ is the right Riemann-Liouville integral of order $\alpha \in
\left( {1/2,1} \right]$ (see \cite{KST06}).
The fractional Sturm-Liouville operator \eqref{FSL}-\eqref{FSL_B} is self-adjoint and positive in $L^2 (a, b)$ (see \cite{TT16}). The spectrum of the fractional Sturm-Liouville operator \eqref{FSL}-\eqref{FSL_B} is
discrete, positive, and real-valued, and the system of eigenfunctions is a complete orthogonal basis in $L^2 (a, b).$ For more properties of the operator generated by the problem \eqref{FSL}-\eqref{FSL_B} we refer to \cite{TT18, TT19}.
\end{itemize}

\begin{itemize}
    \item {\bf Second order elliptic operator $\mathcal{L}.$}\\
    Let $\Omega$ be an open bounded domain in $\mathbb{R}^d\, (d\geq 1)$ with a smooth boundary (for example, of $C^\infty$ class).

Let $L^2(\Omega)$ be the usual $L^2$-space with the inner product $(\cdot, \cdot)$ and %$H^1_0(\Omega),\,H^2(\Omega)$ denote the Sobolev spaces (see \cite{A75}). Let 
let $\mathcal{L}$ be the elliptic operator defined for $g\in \mathcal{D}(-\mathcal{L}):=H^2(\Omega)\cap H^1_0(\Omega)$ as
$$
-\mathcal{L}g(x)=-\sum_{i,j=1}^{d}\partial_j(a_{ij}(x)\partial_j g(x))+c(x)g(x),\;x\in \Omega,
$$
with Dirichlet boundary condition
$$g(x)=0,\;x\in \partial \Omega,$$
where $a_{ij}=a_{ji}\,(1\leq i,j\leq d)$ and $c\geq 0$ in $\overline{\Omega}$. Moreover, assume that $a_{ij}\in C^1(\overline{\Omega}),\,c\in C(\overline{\Omega}),$ and there exists a constant $\delta>0$ such that 
$$
\delta\sum_{i=1}^d \xi_i^2\leq \delta\sum_{i=1}^d a_{ij}(x)\xi_i\xi_j, \; \forall x\in\overline{\Omega}, \,\forall (\xi_1,...,\xi_d)\in\mathbb{R}^d. 
$$
Then the elliptic operator $-\mathcal{L}$ has the eigensystem $\{\lambda_n,\omega_n\}_{n=1}^\infty$ such that $0<\lambda_1\leq\lambda_2\leq \cdots ,\lambda_n\rightarrow\infty$ as $n\rightarrow\infty$ and $\{\omega_n\}_{n=1}^\infty$ forms an orthonormal basis of $L^2(\Omega)$ (\cite{SY11}).
\end{itemize}

\begin{itemize}
  \item {\bf Harmonic oscillator.}\\
For any dimension $d\geq1$, let us consider the harmonic oscillator,
$$
\mathcal{L}:=-\Delta+|x|^{2}, \,\,\, x\in\mathbb R^{d}.
$$
The operator $\mathcal L$ is an essentially self-adjoint operator on $C_{0}^{\infty}(\mathbb R^{d})$. It has a discrete spectrum, consisting of the eigenvalues
$$
\lambda_{k}=\sum_{j=1}^{d}(2k_{j}+1), \,\,\, k=(k_{1}, \cdots, k_{d})\in\mathbb N^{d},
$$
with the corresponding eigenfunctions
$$
\varphi_{k}(x)=\prod_{j=1}^{d}P_{k_{j}}(x_{j}){\rm e}^{-\frac{|x|^{2}}{2}},
$$
which form an orthogonal basis in $L^{2}(\mathbb R^{d})$. Here $P_{l}(\cdot)$ is the $l$--th order Hermite polynomial,
$$
P_{l}(\xi)=a_{l}{\rm e}^{\frac{|\xi|^{2}}{2}}\left(x-\frac{d}{d\xi}\right)^{l}{\rm e}^{-\frac{|\xi|^{2}}{2}},
$$
where $\xi\in\mathbb R$, and
$$
a_{l}=2^{-l/2}(l!)^{-1/2}\pi^{-1/4}.
$$

\end{itemize}

\begin{itemize}
  \item {\bf Anharmonic oscillator.}\\
Another class of examples are anharmonic oscillators (see for instance
\cite{HR82}), which are operators on $L^2(\mathbb{R})$ of the form
$$
\mathcal{L}:=-\frac{d^{2k}}{dx^{2k}} +x^{2l}+p(x), \,\,\, x\in\mathbb R,
$$
for integers $k,l\geq 1$ and with $p(x)$ being a polynomial of degree $\leq 2l-1$ with real coefficients. More general case on $\mathbb{R}^n$ where a prototype operator is of the form
$$
\mathcal{L}:=-(\Delta )^k +|x|^{2l},
$$
where $k, l$ are integers $\geq 1$ see \cite{ChDR21}. 
\end{itemize}

\begin{itemize}
  \item {\bf Landau Hamiltonian in 2D.}\\
The next example is one of the simplest and most interesting models of Quantum Mechanics, that is, the Landau Hamiltonian.

The Landau Hamiltonian in 2D is given by
\begin{equation*} \label{eq:LandauHamiltonian}
\mathcal{L}:=\frac{1}{2}\left(\left(i\frac{\partial}{\partial x}-B
y\right)^{2}+\left(i\frac{\partial}{\partial y}+B x\right)^{2}\right),
\end{equation*}
acting on the Hilbert space $L^{2}(\mathbb R^{2})$, where $B>0$ is some constant. The spectrum of
$\mathcal{L}$ consists of infinite number of eigenvalues (see \cite{F28, L30}) with infinite multiplicity, of the form
\begin{equation*} \label{eq:HamiltonianEigenvalues}
\lambda_{n}=(2n+1)B, \,\,\, n=0, 1, 2, \dots \,,
\end{equation*}
and the corresponding system of eigenfunctions (see \cite{ABGM15, HH13})
{\small
\begin{equation*}
\label{eq:HamiltonianBasis} \left\{
\begin{split}
e^{1}_{k, n}(x,y)&=\sqrt{\frac{n!}{(n-k)!}}B^{\frac{k+1}{2}}\exp\Big(-\frac{B(x^{2}+y^{2})}{2}\Big)(x+iy)^{k}L_{n}^{(k)}(B(x^{2}+y^{2})), \,\,\, 0\leq k, {}\\
e^{2}_{j, n}(x,y)&=\sqrt{\frac{j!}{(j+n)!}}B^{\frac{n-1}{2}}\exp\Big(-\frac{B(x^{2}+y^{2})}{2}\Big)(x-iy)^{n}L_{j}^{(n)}(B(x^{2}+y^{2})), \,\,\, 0\leq j,
\end{split}
\right.
\end{equation*}}
where $L_{n}^{(\alpha)}$ are the Laguerre polynomials given by
$$
L^{(\alpha)}_{n}(t)=\sum_{k=0}^{n}(-1)^{k}C_{n+\alpha}^{n-k}\frac{t^{k}}{k!}, \,\,\, \alpha>-1.
$$

\end{itemize}

\begin{itemize}
  \item {\bf The restricted fractional Laplacian.}\\
  On the other hand, one can define a fractional Laplacian operator by using the integral representation in terms of hypersingular kernels,
  $$\left(-\Delta_{\mathbb{R}^n}\right)^sg(x)=C_{d,s}\, \textrm{P.V.} \int\limits_{\mathbb{R}^n}\frac{g(x)-g(\xi)}{|x-\xi|^{n+2s}}d\xi,$$ where $s\in (0,1).$

  In this case, we realize the zero Dirichlet condition by restricting the operator to act only on functions that are zero outside of the bounded domain $\Omega\subset\mathbb{R}^n.$ Caffarelli and Siro \cite{CS17} called the operator defined in such a way as the restricted fractional Laplacian $\left(-\Delta_{\Omega}\right)^s.$ Such, $\left(-\Delta_{\Omega}\right)^s$ is a self-adjoint operator in $L^2(\Omega),$ with a discrete spectrum $\lambda_{s,k}>0,\,\,k\in \mathbb{N}.$ The corresponding set of eigenfunctions $\{V_{s,k}(x)\}_{k\in \mathbb{N}},$ normalized in $L^2(\Omega),$ gives an orthonormal basis.
\end{itemize}

\section{Examples of functional $F$}\label{S:F}

In this section, as an illustration, we give several examples of the functional $F.$ Of course, there are many other examples, but here we only collect some of them.
\begin{itemize}
    \item {\bf The measurement is the total energy output from the body.}\\
    $$F[u(t,\cdot)]:=\int_\Omega u(x,t)dx,\;t\in[0,T],$$
    where $\Omega$ is a bounded subset of $\mathbb{R}^d (d\geq 1).$
\end{itemize}
\begin{itemize}
    \item {\bf Measurement at an internal point.}\\
    $$F[u(t,\cdot)]:=u(t,x^*),\;t\in[0,T],\;x^*\in \Omega,$$
    where $\Omega$ is an open bounded subset of $\mathbb{R}^d (d\geq 1).$
\end{itemize}
\begin{itemize}
    \item {\bf The measurement is the normal derivative of $u$ at one of the boundary points.}\\
    $$F[u(t,\cdot)]:=\frac{\partial u}{\partial \vec{n}}(t,x^*),\;t\in[0,T],\;x^*\in \partial\Omega,$$
    where $\Omega$ is open bounded subset of $\mathbb{R}^d (d\geq 1).$
\end{itemize}

\section{Value of $\gamma$ in particular cases of $\mathcal L$ and ${F}$}\label{S:gamma}

In this section, for the particular cases of $\mathcal L$ and ${F}$ we show how to find the value of $\gamma$ in \eqref{gamma F}.

Let $\mathcal{H}$ be $L^2(0,1)$ and let $\mathcal{L}$ be particular case of \textbf{Sturm-Liouville problem}, for example, $\mathcal{L}u=-u_{xx},\; x\in (0,1),$ with homogeneous Dirichlet boundary condition. Then the operator has the eigensystem $\{k^2, \sqrt{2}\sin{k\pi x}\}_{k\in\mathbb{N}}.$ 
\begin{itemize}
    \item Let $F$ be \textbf{the measurement is the total energy output from the body}, that is, $F[u(t,\cdot)]:=\int_{0}^1 u(t,x)dx.$ 

Since 
\begin{equation*}F[\omega_k]=\int_0^1 \sqrt{2}\sin{k\pi x}dx=\frac{1+(-1)^{k+1}}{k\pi}=\begin{cases}
    &\frac{2\sqrt{2}}{k\pi},\;\text{if},\;k=2n-1\;(n\in\mathbb{N});\\
    &0, \;\text{if},\;k=2n\;(n\in\mathbb{N}),
\end{cases}\end{equation*} 
we have $$\sum_{k\in\mathbb{N}} |F[\omega_k]|^2<\infty.$$ From this we see that $\gamma$ in \eqref{gamma F} can be taken to be $0.$
\item Let $F$ be \textbf{the measurement at an internal point}, for example, $F[u(t,\cdot)]=u(t,\frac{1}{2}).$
Then we see that
$$
F[\omega_k]=\sqrt{2}\sin{\frac{k\pi}{2}}=\begin{cases}
     &-\sqrt{2},\;\; \text{if},\;  k=4n-1,\;(n\in \mathbb{N});  \\
     & 0,\;\; \text{if},\;  k=2n,\;\;(n\in \mathbb{N});  \\
     & \sqrt{2},\;\; \text{if},\;  k=4n-3,\;\;(n\in \mathbb{N}).
\end{cases}
$$
From this we have
$$\sum_{k\in\mathbb{N}} \frac{|F[\omega_\xi]|^2}{\lambda_k}<\infty.$$ From this we see that $\gamma$ in \eqref{gamma F} can be taken to be $\frac{1}{2}.$
\item Let $F$ be \textbf{the measurement is the normal derivative of $u$ at one of the boundary points}, for example, $F[u(t,\cdot)]=u_x(t,1).$ Since
$$F[\omega_k]=\sqrt{2}k\pi \cos{k\pi}=\begin{cases}
    &-\sqrt{2}k\pi,\;\;\text{if},\;k=2n-1,\;(n\in \mathbb{N});  \\
    &\sqrt{2}k\pi,\;\;\text{if},\;k=2n,\;\;(n\in \mathbb{N}),
\end{cases}$$
we have $$\sum_{k\in\mathbb{N}} \left|\frac{F[\omega_k]}{\lambda_k}\right|^2<\infty.$$ From this we see that $\gamma$ in \eqref{gamma F} can be taken to be $1.$
\end{itemize}

\section{Appendix A}

In this section, we record several classical theorems from functional analysis used in this paper.

\begin{thm}\label{Th:BF}\cite[Theorem 1.9]{KST06}[Banach fixed point theorem] Let $X$ be a Banach space and let $A:X\rightarrow X$ be the map such that
$$\|Au-Av\|\leq\beta \|u-v\|\;\;(0 <\beta <1)$$
holds for all $u,\,v\in X.$ Then the operator $A$ has a unique fixed point $u^*\in X$ that is, $Au^*=u^*.$    
\end{thm}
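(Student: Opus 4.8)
The plan is the standard contraction-mapping (Picard iteration) argument: construct a candidate fixed point as the limit of an iterated sequence, use completeness of $X$ to obtain convergence, and then use the Lipschitz bound both to pass to the limit and to rule out a second fixed point. Since the statement is quoted from \cite[Theorem 1.9]{KST06}, one may also simply cite it; the sketch below reproduces its proof.

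First I would fix an arbitrary $u_0\in X$ and define the sequence $(u_n)_{n\geq 0}$ by $u_{n+1}:=Au_n$. An induction on the contraction hypothesis gives $\|u_{n+1}-u_n\|\leq\beta^{n}\|u_1-u_0\|$ for all $n$. Hence, for $m>n$, the triangle inequality together with the geometric series yields
$$\|u_m-u_n\|\leq\sum_{k=n}^{m-1}\|u_{k+1}-u_k\|\leq\|u_1-u_0\|\sum_{k=n}^{\infty}\beta^{k}=\frac{\beta^{n}}{1-\beta}\|u_1-u_0\|,$$
which tends to $0$ as $n\to\infty$ because $0<\beta<1$. Thus $(u_n)$ is a Cauchy sequence.

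Next, since $X$ is a Banach space, $(u_n)$ converges to some $u^{*}\in X$. The map $A$ is $\beta$-Lipschitz and therefore continuous, so letting $n\to\infty$ in $u_{n+1}=Au_n$ gives $u^{*}=Au^{*}$; that is, $u^{*}$ is a fixed point. For uniqueness, suppose $Au^{*}=u^{*}$ and $Av^{*}=v^{*}$. Then $\|u^{*}-v^{*}\|=\|Au^{*}-Av^{*}\|\leq\beta\|u^{*}-v^{*}\|$, so $(1-\beta)\|u^{*}-v^{*}\|\leq 0$, and since $1-\beta>0$ we conclude $u^{*}=v^{*}$.

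The argument is entirely classical and presents no genuine obstacle; the only step requiring a moment of care is the Cauchy estimate, where one sums the geometric tail $\sum_{k\geq n}\beta^{k}=\beta^{n}/(1-\beta)$ and observes that it vanishes as $n\to\infty$. Everything else follows immediately from the completeness of $X$ and the Lipschitz hypothesis.
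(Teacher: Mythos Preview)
Your argument is the standard Picard iteration proof and is correct. The paper itself does not prove this theorem; it is merely stated in the appendix with a citation to \cite[Theorem~1.9]{KST06}, so your brief sketch (or simply citing the reference, as you also suggest) is exactly what is expected here.
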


\begin{thm}\label{Th:AA}\cite[Theorem 1.8]{KST06}[Arzel\'a-Ascoli theorem]
A necessary and sufficient condition that a subset of continuous functions $U,$ which are defined on the closed interval $[a,b],$ be relatively compact in $C[a,b]$ is that this subset be uniformly bounded and equicontinuous.
\end{thm}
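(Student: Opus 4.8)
The plan is to prove both implications of the equivalence, working throughout with the uniform (supremum) norm that turns $C[a,b]$ into a complete metric space, and using the standard fact that in a metric space relative compactness of $U$ is the same as sequential compactness of the closure $\overline{U}$.

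First I would treat \emph{necessity}, the easier direction. Suppose $U$ is relatively compact, so $\overline{U}$ is compact in $C[a,b]$. Compactness gives boundedness, hence $\sup_{f\in U}\|f\|_{C[a,b]}<\infty$, which is precisely uniform boundedness. For equicontinuity I would exploit total boundedness: given $\varepsilon>0$, cover $\overline{U}$ by finitely many balls of radius $\varepsilon/3$ centred at functions $f_1,\dots,f_n$. Each $f_i$ is uniformly continuous on the compact interval $[a,b]$, so there is a common $\delta>0$ with $|f_i(x)-f_i(y)|<\varepsilon/3$ whenever $|x-y|<\delta$. For an arbitrary $f\in U$ one chooses $f_i$ with $\|f-f_i\|_{C[a,b]}<\varepsilon/3$; the triangle inequality then yields $|f(x)-f(y)|<\varepsilon$ for $|x-y|<\delta$, with $\delta$ independent of $f$.

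The substantial direction is \emph{sufficiency}, where the plan is a diagonal extraction followed by an equicontinuity upgrade. Assume $U$ is uniformly bounded and equicontinuous and take an arbitrary sequence $(f_n)\subset U$; it suffices to extract a subsequence converging in $C[a,b]$. Fix a countable dense set $\{q_k\}_{k\in\mathbb{N}}$ in $[a,b]$. Since $(f_n(q_1))$ is a bounded real sequence, Bolzano--Weierstrass yields a subsequence convergent at $q_1$; refining successively at $q_2,q_3,\dots$ and passing to the diagonal subsequence $g_n:=f_{n,n}$ produces one subsequence that converges at every $q_k$.

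Finally I would use equicontinuity to turn this pointwise convergence on a dense set into uniform convergence. Given $\varepsilon>0$, equicontinuity provides $\delta>0$ with $|g_n(x)-g_n(y)|<\varepsilon/3$ for all $n$ whenever $|x-y|<\delta$. By compactness of $[a,b]$ finitely many base points $q_{k_1},\dots,q_{k_m}$ from the dense set are $\delta$-dense, and since $(g_n)$ converges at each of them there is $N$ with $|g_n(q_{k_j})-g_m(q_{k_j})|<\varepsilon/3$ for all $j$ and all $n,m\ge N$. For any $x\in[a,b]$, choosing a nearby $q_{k_j}$ and applying the triangle inequality gives $|g_n(x)-g_m(x)|<\varepsilon$, so $(g_n)$ is uniformly Cauchy; completeness of $C[a,b]$ then forces uniform convergence to some $g\in C[a,b]$. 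Hence $\overline{U}$ is sequentially compact and therefore compact. The main obstacle is exactly this last passage: the diagonal subsequence delivers only pointwise convergence on a dense set, and the delicate point is to combine the uniform modulus of continuity furnished by equicontinuity with the finitely many dense base points to manufacture the uniform Cauchy estimate.
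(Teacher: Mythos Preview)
Your proposal is a correct and standard proof of the Arzel\`a--Ascoli theorem. However, the paper does not supply its own proof of this statement: Theorem~\ref{Th:AA} is merely quoted in the Appendix as a classical result, with a citation to \cite[Theorem~1.8]{KST06}, and no argument is given. So there is nothing to compare against on the paper's side; you have supplied a full proof where the authors intentionally provide none.
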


In this statement,
\begin{itemize}
    \item $U\subset C[a,b]$ is uniformly bounded means that there exists a number $C$ such that $$|\varphi(x)|\leq C$$ for all $x\in[a,b]$ and for all $\varphi\in U,$ and
    \item $U\subset C[a,b]$ is equicontinuous means that: for every $\varepsilon>0$ there is a $\delta=\delta(\varepsilon)>0$ such that $$|\varphi(x_1)-\varphi(x_2)|<\varepsilon$$
  holds for all $x_1,\,x_2\in [a,b]$ such that $|x_1-x_2|<\delta$ and for all $\varphi\in U.$
\end{itemize}

\begin{thm}\label{Th:ShF}\cite[Theorem 1.7]{KST06}[Shauder's fixed point theorem]
    Let $U$ be a closed convex subset of $C[a,b],$ and let $A:U\rightarrow U$ be the map such that the set $\{Au:\;u\in U\}$ is relatively compact in $C[a,b].$ 
    %$A(U_1)$ relatively compact in $C[a,b],$
    %where $U_1$ is a bounded subset of $U.$ 
    Then the operator $A$ has at least one fixed point $u^*\in U$ i.e. $Au^*=u^*.$
\end{thm}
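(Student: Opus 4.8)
The plan is to reduce this infinite-dimensional statement to the finite-dimensional Brouwer fixed point theorem by approximating $A$ with maps whose ranges lie in finite-dimensional compact convex sets (the classical Schauder approach). Write $X=C[a,b]$ with the supremum norm and set $W:=\overline{A(U)}$, which is compact by hypothesis; since the $A u$ all lie in $U$ and $U$ is closed, we have $W\subset U$. The continuity of $A$ (implicit in the statement, and indispensable) will be used at the final limiting step.

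First I would construct the Schauder projections. Fix $n\in\mathbb{N}$. As $W$ is compact, choose a finite $\tfrac1n$-net $\{y_1,\dots,y_{m_n}\}\subset W\subset U$, and define continuous weights $\phi_i(y)=\max\bigl(0,\tfrac1n-\|y-y_i\|\bigr)$ for $y\in W$. For each $y\in W$ at least one $\phi_i(y)>0$, so the denominator below never vanishes and I may set
$$
P_n(y)=\frac{\sum_{i=1}^{m_n}\phi_i(y)\,y_i}{\sum_{i=1}^{m_n}\phi_i(y)}.
$$
Then $P_n$ is continuous, its range lies in the convex hull $C_n:=\mathrm{conv}\{y_1,\dots,y_{m_n}\}$, and since $\phi_i(y)=0$ whenever $\|y-y_i\|\ge\tfrac1n$, each $P_n(y)$ is a convex combination of points within $\tfrac1n$ of $y$, whence $\|P_n(y)-y\|\le\tfrac1n$ for all $y\in W$.

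Next I would apply Brouwer and pass to the limit. Because $y_1,\dots,y_{m_n}\in U$ and $U$ is convex, $C_n\subset U$; moreover $C_n$ is a compact convex subset of the finite-dimensional space $\mathrm{span}\{y_1,\dots,y_{m_n}\}$. The composition $A_n:=P_n\circ A$ is continuous and maps $C_n$ into $C_n$ (as $A(C_n)\subset W$ and $P_n(W)\subset C_n$), so Brouwer's theorem yields $u_n\in C_n$ with $P_n(A(u_n))=u_n$, and the net estimate gives
$$
\|u_n-A(u_n)\|=\|P_n(A(u_n))-A(u_n)\|\le\tfrac1n.
$$
The points $A(u_n)$ lie in the relatively compact set $A(U)$, so a subsequence satisfies $A(u_{n_k})\to u^*$ with $u^*\in W\subset U$; the displayed estimate then forces $u_{n_k}\to u^*$ as well, and continuity of $A$ gives $A(u_{n_k})\to A(u^*)$. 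Comparing the two limits of $A(u_{n_k})$ yields $A(u^*)=u^*$.

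The main obstacle is the construction of the Schauder projection $P_n$: one must establish \emph{simultaneously} that it is well-defined (nonvanishing denominator via the net property), continuous, range-restricted to a finite-dimensional convex hull that still sits inside $U$, and uniformly $\tfrac1n$-close to the identity on $W$. Once these four properties are secured, the reduction to Brouwer's theorem and the compactness-based limiting argument are entirely routine.
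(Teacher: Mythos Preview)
Your argument is the standard Schauder-projection reduction to Brouwer's theorem, and it is correct. You rightly flag that continuity of $A$ is indispensable and only implicit in the stated hypotheses; the construction of $P_n$ and the limiting step are carried out cleanly.

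There is, however, nothing to compare against: the paper does not prove this theorem. It appears in the Appendix as one of ``several classical theorems from functional analysis used in this paper,'' cited from \cite[Theorem~1.7]{KST06} and stated without proof. So your proposal supplies a proof where the paper simply quotes the result.
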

\begin{thm}\label{LFB}\cite[p. 77]{KF} Let $H$ be Hilbert space.
  Then linear functional $F:H\rightarrow \mathbb{R}$ is continuous if and only if it is bounded on $H.$  
\end{thm}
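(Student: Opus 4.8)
The plan is to prove the two implications separately, using only the linearity of $F$ together with the norm structure of $H$; completeness of $H$ plays no role here, so the argument is really that of a general normed space. Throughout, I read \textbf{bounded} in the operator-theoretic sense: there is a constant $M\ge 0$ with $|F(u)|\le M\|u\|$ for all $u\in H$, equivalently $\sup_{\|u\|\le 1}|F(u)|<\infty$. It is worth flagging at the outset that a nontrivial linear functional is never bounded on all of $H$ (one may send $\|u\|$, and hence $|F(u)|$, to infinity along a ray), so this normalization against $\|u\|$ is exactly the notion in play.

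First I would show that boundedness implies continuity, which is the quick direction. Assuming $|F(u)|\le M\|u\|$ for every $u$, I take arbitrary $u,v\in H$ and use linearity to write $F(u)-F(v)=F(u-v)$, so that
$$
|F(u)-F(v)|=|F(u-v)|\le M\|u-v\|.
$$
Thus $F$ is Lipschitz with constant $M$, hence uniformly continuous, and in particular continuous on $H$. The only idea needed is that linearity collapses the increment $F(u)-F(v)$ into a single evaluation to which the bound applies.

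Next I would establish the converse, that continuity forces boundedness; this is the slightly more delicate step and hinges on the homogeneity of $F$. Since $F$ is continuous at $0$ and $F(0)=0$, the choice $\varepsilon=1$ yields a $\delta>0$ such that $|F(u)|\le 1$ whenever $\|u\|\le\delta$. For an arbitrary nonzero $u$ I set $v=\delta u/\|u\|$, so that $\|v\|=\delta$ and therefore $|F(v)|\le 1$. By homogeneity, $|F(v)|=\tfrac{\delta}{\|u\|}|F(u)|$, and rearranging gives $|F(u)|\le \delta^{-1}\|u\|$; this inequality holds trivially at $u=0$ as well. Hence $F$ is bounded with operator norm at most $1/\delta$, which completes the equivalence.

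There is no genuine obstacle in this classical statement; the only point requiring care is the reading of \textbf{bounded} noted above, and the rescaling in the second implication. Continuity supplies control of $|F|$ only on a small ball of radius $\delta$, and the map $u\mapsto \delta u/\|u\|$ is precisely the device that propagates that local control to all of $H$ via the scalar homogeneity $F(\lambda u)=\lambda F(u)$.
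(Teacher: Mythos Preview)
Your proof is correct and is the standard textbook argument. The paper does not supply its own proof of this statement: it is recorded in the Appendix as a classical fact with a citation to Kolmogorov--Fomin, so there is nothing to compare against.
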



\begin{thebibliography}{99}

\bibitem[ABGM15]{ABGM15} L. D. Abreu, P. Balazs, M. de Gosson, Z. Mouayn. Discrete coherent states for higher
Landau levels. {\em Ann. Physics}, 363: 337--353, 2015.

\bibitem[AKM13]{AKM13} T. S. Aleroev, M. Kirane, S. A. Malik. Determination of a source term for a time fractional diffusion equation with an integral type over-determining condition. Electron. J. Differential Equations 2013, No. 270, 16 pp.

\bibitem[BZhSY16]{BZhSY16} Z. Bai, S. Zhang, S. Sun, C. Yin. Monotone iterative method for fractional differential equations. Electron. J. Differential Equations 2016, Paper No. 6, 8 pp.

\bibitem[CS17]{CS17} L.~A.~Caffarelli, Y.~Sire. On some pointwise inequalities involving nonlocal operators. In: S. Chanillo , B. Franchi, G. Lu, C. Perez, E. Sawyer. (eds) Harmonic Analysis, Partial Differential Equations and Applications. Applied and Numerical Harmonic Analysis. Birkh\"{a}user, Cham. 2017.

\bibitem[ChDR21]{ChDR21} M. Chatzakou, J. Delgado, M. Ruzhansky. On a class of anharmonic oscillators. {\em J. Math. Pures Appl.}, 153(9):1–29, 2021.

\bibitem[Foc28]{F28} V. Fock. Bemerkung zur quantelung des harmonischen
oszillators im Magnetfeld. {\em Z. Phys. A}, 47(5--6): 446--448, 1928.

\bibitem[HH13]{HH13} A. Haimi, H. Hedenmalm. The polyanalytic Ginibre ensembles. {\em J. Stat. Phys.}, 153(1):10--47, 2013.

\bibitem[HR82]{HR82}
B. Helffer, D. Robert.
Asymptotique des niveaux d'\'energie pour des hamiltoniens \`a un degr\'e de libert\'e. {\em  Duke Math. J.}, 49(4):853--868, 1982.

\bibitem[HV22a]{HV22a} A. S. Hendy, K. Van Bockstal. On a reconstruction of a solely time-dependent source in a time-fractional diffusion equation with non-smooth solutions. J. Sci. Comput. 90, 41 (2022). https://doi.org/10.1007/s10915-021-01704-8

\bibitem[HV22b]{HV22b} A. S. Hendy, K. Van Bockstal. A solely time-dependent source reconstruction in a multiterm time-fractional order diffusion equation with non-smooth solutions. Numer. Algor. 90, 809–832 (2022). https://doi.org/10.1007/s11075-021-01210-w

\bibitem[H81]{H81} D. Henry. Geometric theory of semilinear parabolic equations. Springer-Verlag, Berlin, 1981.

\bibitem[IC16]{IC16}  M. I. Ismailov, M. Çiçek. Inverse source problem for a time-fractional diffusion equation with nonlocal boundary conditions. Appl. Math. Model., 40 (7-8): 4891–4899, 2016.

\bibitem[KST06]{KST06}
A. A. Kilbas, H. M. Srivastava, J. J. Trujillo.
Theory and applications of fractional differential equations. Mathematics studies, vol. 204. North-Holland: Elsevier; 2006:vii--x.

\bibitem[KF]{KF} A. N. Kolmogorov, S. V. Fomin. Elements of the theory of functions and functional analysis. Vol. 1. Metric and normed spaces. Translated from the first Russian edition by Leo F. Boron. Graylock Press, Rochester, N.Y., 1957. ix+129 pp.

\bibitem[Lan30]{L30}
L. Landau.
Diamagnetismus der Metalle.
{\em Z. Phys. A}, 64(9--10): 629--637, 1930.

\bibitem[LL21]{LL21} A. O. Lopushansky, H. P. Lopushanska. Inverse problem of determination of the right-hand side of an equation with fractional derivatives in weight distributions. J. Math. Sci. (N.Y.) 258 (2021), no. 4, 408–421.

\bibitem[Nai68]{Naimark}
M.~A. Na{\u\i}mark.
\newblock {Linear differential operators. Part {II}: Linear differential operators in Hilbert space}.
\newblock With additional material by the author, and a supplement by V. {\`E}.
  Ljance. Translated from the Russian by E. R. Dawson. English translation
  edited by W. N. Everitt. Frederick Ungar Publishing Co., New York, 1968.

\bibitem[OP22]{OP22} D. Orlovsky, S. Piskarev. Inverse problem with final overdetermination for time-fractional differential equation in a Banach space. J. Inverse Ill-Posed Probl. 30 (2022), no. 2, 221–237.

\bibitem[OSh07]{OSh07} Z.M. Odibat, N.T. Shawagfeh. Generalized Taylor’s formula. {\em Appl. Math. Comput.}, 186: 286--293, 2007.

\bibitem[SY11]{SY11} K. Sakamoto, M. Yamamoto. Initial value/boundary value problems for fractional diffusion-wave equations and applications to some inverse problems, {\em J. Math. Anal. Appl.,} 382: 426--447, 2011.

\bibitem[SRT23]{SRT23} D. Serikbaev, M. Ruzhansky, N. Tokmagambetov. Inverse problem of determining time-dependent leading coefficient in the time-fractional heat equation. Preprint. 2023. 

\bibitem[Sim14]{Sim14}
T. Simon. Comparing Frechet and positive stable laws. {\it Electron. J. Probab.}, 19:1--25, 2014.

\bibitem[TT17]{torebek} B. T. Torebek, R. Tapdigoglu. Some inverse problems for the nonlocal heat equation with Caputo fractional derivative. {\it Mathematical Methods in the Applied Sciences}, 40(18): 6468--6479, 2017.

\bibitem[TT16]{TT16} N. Tokmagambetov, B. T. Torebek.
Fractional analogue of Sturm--Liouville operator. {\it Documenta Mathematica}, 21: 1503--1514, 2016.

\bibitem[TT18]{TT18}
N.~Tokmagambetov, B.~T.~Torebek.
\newblock Green's formula for integro-differential operators.
\newblock {\em J. Math. Anal. Appl.}, 468(1):473--479, 2018.

\bibitem[TT19]{TT19} N. Tokmagambetov, B. T. Torebek.
Fractional Sturm--Liouville equations: self--adjoint extensions.
{\it Complex Analysis and Operator Theory}, 13(5): 2259–2267, 2019.
\end{thebibliography}
\end{document}